\theoremstyle{plain}
\newtheorem{thm}{Theorem}[section]
\newtheorem{lem}[thm]{Lemma}
\newtheorem{prop}[thm]{Proposition}
\newtheorem{lemma}[thm]{Lemma}
\theoremstyle{definition}
\newtheorem{defn}[thm]{Definition}
\newtheorem{rem}[thm]{Remark}
\newcommand{\osc}{\operatornamewithlimits{osc}}
\mathchardef\semic="303A
\newcommand{\R}{{\mathbf R}}
\newcommand{\Z}{{\mathbf Z}}
\newcommand{\mD}{{\mathcal D}}
\newcommand{\mL}{{\mathcal L}}
\newcommand{\V}{{\mathcal V}}
\newcommand{\sett}[2]{ \{ #1 \, \semic \, #2 \} }
\newcommand{\supp}{\text{{\rm supp}}\,}
\newcommand{\dist}{\text{{\rm dist}}\,}
\newcommand{\clos}[1]{\overline{#1}}
\newcommand{\barint}{\mbox{$ave \int$}}
\newcommand{\divv}{{\text{{\rm div}}}}
\newcommand{\esssup}{\text{{\rm ess sup}}}
\newcommand{\pd}{\partial}
\newcommand{\bdy}{\partial}
\newcommand{\bmo}{\text{BMO}}
\newcommand{\bvl}{{\text{BV}_\loc}}
\newcommand{\loc}{\text{{\rm loc}}}
\newcommand{\abs}[1]{| #1 |}
\newcommand{\ave}[1]{\langle #1 \rangle}
\def\barint_#1{\mathchoice
            {\mathop{\vrule width 6pt
height 3 pt depth -2.5pt
                    \kern -8.8pt
\intop}\nolimits_{#1}}%
            {\mathop{\vrule width 5pt height
3 pt depth -2.6pt
                    \kern -6.5pt
\intop}\nolimits_{#1}}%
            {\mathop{\vrule width 5pt height
3 pt depth -2.6pt
                    \kern -6pt
\intop}\nolimits_{#1}}%
            {\mathop{\vrule width 5pt height
3 pt depth -2.6pt
          \kern -6pt \intop}\nolimits_{#1}}}
\definecolor{gr}{rgb}   {0.,   0.8,   0. }
\definecolor{bl}{rgb}   {0.,   0.5,   1. }
\definecolor{mg}{rgb}   {0.7,  0.,    0.7}
\begin{document}
\title[Bounded variation approximation of martingales and solutions]{Bounded variation approximation of $L_{\lowercase{p}}$ dyadic martingales and solutions to elliptic equations}
\author[Tuomas Hyt\"onen]{Tuomas Hyt\"onen}
\author[Andreas Ros\'en]{Andreas Ros\'en$\,^1$}
\thanks{$^1\,$Formerly Andreas Axelsson}
\address{Tuomas Hyt\"onen, Department of Mathematics and Statistics, P.O. Box 68 (Gustaf H\"all\-str\"omin katu 2b), FI-00014 University of Helsinki, Finland} 
\email{tuomas.hytonen@helsinki.fi}
\address{Andreas Ros\'en\\Mathematical Sciences, Chalmers University of Technology and University of Gothenburg\\
SE-412 96 G{\"o}teborg, Sweden}
\email{andreas.rosen@chalmers.se}

\begin{abstract}
We prove continuity and surjectivity of the trace map onto $L_p(\R^n)$, from a space
of functions of locally bounded variation, defined by the Carleson functional.
The extension map is constructed through a stopping time argument.
This extends earlier work by Varopoulos in the $\bmo$ case, related to the Corona theorem.
We also prove $L_p$ Carleson approximability results for solutions to elliptic non-smooth
divergence form equations, which generalize results in the case $p=\infty$ by Hofmann, Kenig,
Mayboroda and Pipher.
\end{abstract}



\thanks{T.H. is supported by the European Research Council through the ERC Starting Grant ``Analytic--probabilistic methods for borderline singular integrals''.
He is a member of the Finnish Centre of Excellence in Analysis and Dynamics Research.
A.R. is supported by Grant 621-2011-3744 from the Swedish research council, VR}
\maketitle

\section{Introduction}

Estimates of traces $u|_{\partial D}$ of functions $u:D\to \R$ in some given domain $D$, say in the Euclidean space, are important in analysis, for example in boundary value problems for partial differential equations. By local parametrization, it often suffices to consider the case where $D$ is the half-space
$$
  \R^{1+n}_+:= \sett{(t,x)}{t>0, x\in \R^n}
$$
and the traces are defined on the boundary $\partial\R^{1+n}_+=\R^n= \sett{(0,x)}{x\in \R^n}$. We shall concentrate on this case here.
A first problem is to show boundedness of the trace map
$$
  \gamma: u(t,x)\mapsto g(x) =(\gamma u)(x):= u(0,x).
$$
This amounts to identifying norms $\|\cdot\|_D$ and $\|\cdot\|_{\partial D}$ on the function spaces for $u$ and $g$ respectively, so that an estimate $\|g\|_{\partial D}\lesssim \|u\|_D$ holds.
A second problem is to determine whether $\gamma$, as a map
between the corresponding function spaces, is surjective. One wants that any $g$ can be extended to some $u$ in $D$ such that
$\gamma(u)=g$, with estimates
$\|u\|_D\lesssim \|g\|_{\partial D}$.

The most well-known trace result is the Sobolev trace theorem. This states that the trace map
$$
  \gamma: H^s(\R^{1+n}_+)\to H^{s-1/2}(\R^n)
$$
is bounded and surjective when $s>1/2$.
It is important to note that the Sobolev trace theorem breaks down in the limit case of regularity $s=1/2$,
and does not yield a bounded trace map onto the Lebesgue boundary space $L_2(\R^n)$.
One way to solve this problem is to consider instead the scale of Besov spaces $B_{p,q}^s$, where
the trace map
$$
  \gamma : B_{p,q}^s(\R^{1+n}_+)\to B_{p,q}^{s-1/p}(\R^n),
$$
is bounded and surjective when $s>1/p$.
Here also $\gamma : B_{p,1}^{1/p}(\R^{1+n}_+)\to L_p(\R^n)$ is bounded and surjective whenever $1\le p<\infty$, 
whereas the $L_2$ Sobolev scale of spaces is $H^s= B^s_{2,2}$.

Our first main result provides a new bounded and surjective trace map onto $L_p(\R^n)$, from a space of functions of locally bounded variation in the half-space, with norm
$$
  \|C(\nabla u)\|_{L_p(\R^n)},
$$
using the Carleson functional
\begin{equation*}
  C\mu(x):=\sup_{Q\owns x}\frac{1}{\abs{Q}}\iint_{\widehat{Q}} d\abs{\mu}(t,y)
\end{equation*}
for locally finite measures $\mu$ on $\R^{1+n}_+$.
Here the supremum is over all cubes $Q\subset\R^n$ containing $x$, and $\widehat Q:= (0,\ell(Q))\times Q$ denotes the Carleson box above $Q$, with
side length $\ell(Q)$.
Note that $C(\nabla u)$ is well defined for any $u\in \bvl(\R^{1+n}_+)$ of locally bounded variation.

In addition to the quantitative condition involving the norm $\|C(\nabla u)\|_{L_p(\R^n)}$, we also need some decay at infinity, which can be assumed in various forms. The weakest condition suitable for our needs is
\begin{equation*}
 \ave{\abs{u}}_{W(t,x)}:=  \frac{1}{\abs{W(t,x)}}\iint_{W(t,x)}\abs{u(s,y)}dsdy\underset{t\to\infty}{\longrightarrow} 0
\end{equation*}
for all $x\in\R^n$, where we use averages over Whitney regions
$$
  W(t,x):= \sett{(s,y)}{c_0^{-1}<s/t<c_0, |y-x|<c_1 t},
$$
with some fixed parameters $c_0>1$ and $c_1>0$. The above convergence is in particular implied by the stronger quantitative bound
\begin{equation*}
  Nu\in L_p(\R^n),
\end{equation*}
where
$$
  Nu(x):=\esssup_{|y-x|<t}|u(t,y)|,\quad x\in \R^n,
$$
 denotes the non-tangential maximal function. Indeed, it is easy to check that $\ave{\abs{u}}_{W(t,x)}\leq \inf_{y\in B(x,ct)}Nu(y)\to 0$ as $t\to\infty$ if $Nu\in L_p(\R^n)$. Thus we have the nested interior function spaces
\begin{equation*}
\begin{split}
  \mathcal{V}_p^0 &:=\{u\in\bvl(\R^{1+n}_+): C(\nabla u)\in L_p(\R^n),\ \ave{\abs{u}}_{W(t,x)}\underset{t\to\infty}{\longrightarrow} 0\text{ for all }x\in\R^n\} \\
\supset  \mathcal{V}_p^N &:=\{u\in\bvl(\R^{1+n}_+): C(\nabla u)\in L_p(\R^n),\ Nu\in L_p(\R^n)\} \\ 
\supset  \tilde{\mathcal{V}}_p^N &:=\{u\in C^1(\R^{1+n}_+)\,\quad: C(\nabla u)\in L_p(\R^n),\ Nu\in L_p(\R^n)\}.
\end{split}  
\end{equation*}
With the help of these spaces, we formulate our first $L_p$ extension result:

%
%
%
%

\begin{thm}   \label{thm:1}
Let $1<p<\infty$. Consider the normed linear function space $\V_p^0$ with norm $\|C(\nabla(\cdot))\|_p$.
Then the trace $\gamma u$ of any $u\in \V_p^0$, is well defined almost everywhere in the sense
of convergence of Whitney averages
$$
  (\gamma u)(x):= \lim_{t\to 0^+} |W(t,x)|^{-1}\iint_{W(t,x)} u(s,y) ds dy, \qquad x\in \R^n.
$$
The trace map $\gamma: \V_p^0\to L_p(\R^n)$ is well defined, and there exists $c_p<\infty$ so that estimates
$$
  \|\gamma u\|_{L_p(\R^n)}\le c_p \|C(\nabla u)\|_{L_p(\R^n)}
$$
hold for all $u\in\V_p^0$.
Moreover, the trace map $\gamma$ is surjective, and given any $g\in L_p(\R^n)$ there exists an extension
$u\in \V_p^0$ such that $\gamma u=g$, with estimates
$$
  \|C(\nabla u) \|_{L_p(\R^n)}\le c_p \|g\|_{L_p(\R^n)}.
$$
In fact, this extension may be chosen so that $u\in \tilde{\V}_p^N$, with the additional estimate
\begin{equation*}
  \|Nu \|_{L_p(\R^n)}\le c_p \|g\|_{L_p(\R^n)},
\end{equation*}
and pointwise non-tangential limits $\lim_{(t,y)\to(0,x), |y-x|<\alpha t} u(t,y)=g(x)$ exist at each Lebesgue point of $g$, for any fixed $\alpha<\infty$.
\end{thm}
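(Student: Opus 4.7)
The proof splits naturally into the trace inequality and the surjective extension via a stopping-time argument.

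For the trace bound, I would first note that under the decay hypothesis $\ave{|u|}_{W(T,x)}\to 0$, the trace admits the fundamental-theorem-of-calculus representation
\[
  \gamma u(x) = -\int_0^\infty \partial_t u(t,x)\,dt,
\]
valid in a suitable averaged sense through Whitney averaging. Testing against $\phi\in L_{p'}(\R^n)$, extended to $\R^{1+n}_+$ as a Poisson-type $\tilde\phi(t,x)$, and integrating by parts in $t$, one reduces the matter to a Carleson/tent-space duality pairing of the form
\[
  \Bigl|\iint_{\R^{1+n}_+} F(t,x)\, d\mu(t,x)\Bigr|\leq C\int_{\R^n}N F(x)\cdot C|\mu|(x)\,dx,
\]
with $N$ the non-tangential maximal function. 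The choice of $\tilde\phi$ yields $N\tilde\phi \lesssim M\phi$, and combining this with the Hardy--Littlewood maximal theorem and $L_p$--$L_{p'}$ duality delivers $\|\gamma u\|_p\lesssim\|C(\nabla u)\|_p$. The a.e.\ existence of $\gamma u$ as a Whitney-average limit is handled by a Poincar\'e-type comparison across dyadic scales: each dyadic-scale increment $|\ave u_{W(2s,x)} - \ave u_{W(s,x)}|$ is pointwise controlled by a Carleson-type integral, and the decay hypothesis identifies the limit at $t\to 0^+$.

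For the surjective extension, given $g\in L_p(\R^n)$, the naive dyadic-martingale candidate $\bar u(t,x):=\ave g_{Q(t,x)}$ (with $Q(t,x)$ the dyadic cube of side $\simeq t$ containing $x$) has trace $g$ and satisfies $|\bar u|\leq Mg$, so its non-tangential maximal function is automatically in $L_p$ by Hardy--Littlewood. However, its Carleson functional is not generally controlled by $\|g\|_p$, because $|\nabla\bar u|$ is a sum of atomic jump contributions on Whitney-region boundaries whose total mass inside a cube $R$ is governed by the absolute dyadic sum $\sum_{Q\subsetneq R,\,Q\ni y}|d_Q(y)|$ of martingale differences $d_Q:=\ave g_Q-\ave g_{\widehat Q}$, and such absolute sums are only square-summable in $L_p$. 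The remedy is a Corona-type stopping time: for each dyadic root $R$, let $\mathcal{F}(R)$ be the maximal dyadic subcubes $Q\subseteq R$ where $|\ave g_Q-\ave g_R|$ exceeds an adaptive threshold $\alpha_R$ keyed to the local size of $g$, and freeze $\bar u\equiv\ave g_R$ on the Corona region $R\setminus\bigcup_{Q\in\mathcal{F}(R)}Q$; iterate the construction inside each stopping cube. The resulting $|\nabla\bar u|$ is concentrated on Corona boundaries with controlled mass per generation, and a layer-cake argument in $\alpha$ yields $\|C(\nabla\bar u)\|_p\lesssim\|g\|_p$. A convolution at Whitney scale then smooths $\bar u$ into a $C^1$ extension $u\in\tilde{\V}_p^N$ with the same bounds up to constants, and non-tangential convergence to $g$ at Lebesgue points follows from a dyadic differentiation argument applied to $\bar u$.

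The central difficulty is the design of the stopping time so that the assembled Carleson measure $|\nabla\bar u|$ satisfies an $L_p$ bound, rather than only the uniform Carleson-norm/BMO bound of Varopoulos' construction in the $p=\infty$ case. The thresholds $\alpha_R$ must depend on scale and on the local size of $g$, and a careful layer-cake reassembly across Corona generations is needed so that the sum reconstructs exactly $\|g\|_p$. Simultaneously preserving the non-tangential bound, handling the smoothing step without inflating the Carleson norm, and verifying the Lebesgue-point trace identification is the technical heart of the construction.
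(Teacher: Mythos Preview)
Your trace argument is sound and essentially dual to the paper's direct approach: the paper telescopes Whitney averages to obtain the pointwise bound $|\gamma u(x)|\le A(\nabla u)(x)$ and then invokes the Coifman--Meyer--Stein equivalence $\|A(\nabla u)\|_p\approx\|C(\nabla u)\|_p$; your Carleson-embedding formulation is the predual of the same inequality.

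The extension has a genuine gap. Your stopped function $\bar u$ does \emph{not} have trace $g$. At any Lebesgue point $x$ where the stopping terminates in a minimal cube $Q\ni x$ (which happens on a set of positive measure for generic $g$; think of a smooth positive $g$ near a maximum, where averages over nested small cubes differ by far less than $\epsilon M_\mD g$), the non-tangential limit of $\bar u$ is $\ave{g}_Q$, not $g(x)$. With adaptive threshold $\alpha_Q\sim\epsilon M_\mD g(Q)$ one obtains only $\|\bar u(0,\cdot)-g\|_p\le\epsilon\|g\|_p$. The paper proves precisely this \emph{approximate}-extension statement as a separate theorem, and then \emph{iterates}: apply the construction to the residual $g_1:=g-f_0|_{\R^n}$, then to $g_2:=g_1-f_1|_{\R^n}$, etc., and set $f:=\sum_k f_k$, summing a geometric series to get the exact extension. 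This iteration is absent from your outline, and no dyadic-differentiation argument can repair it, since $\bar u$ is constant near the boundary above every terminal stopping cube.

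A second gap is the Carleson bound for the approximate extension itself: the ``layer-cake argument in $\alpha$'' is too vague, and the actual mechanism is different. From the stopping threshold one writes
\[
  \sum_{Q\in\omega_*,\,Q\subset Q_1}|\ave{g}_Q-\ave{g}_{Q_*}|\,|Q|
  \le\epsilon^{-1}\sum_{Q\in\omega_*,\,Q\subset Q_1}|\ave{g}_Q-\ave{g}_{Q_*}|^2\,\frac{|Q|}{M_\mD g(Q)},
\]
and the right side is recast (via the elementary bound $|Q|/M_\mD g(Q)\lesssim\int_Q (M_\mD g)^{-1}$) as $\int_{Q_1}|S_{\omega_*}g|^2\,w\,dx$ for the \emph{stopped} square function $S_{\omega_*}$ and the weight $w=(M_\mD g)^{-1}$. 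The key input is the weighted estimate $\|S_{\omega_*}g\|_{L_2(w)}\lesssim\|M_\mD g\|_{L_2(w)}$, uniform over all stopping collections $\omega_*$, valid because $w\in A_\infty$; it is proved by a good-$\lambda$ inequality. This yields the pointwise bound $C_\mD(\partial_t\bar u)\lesssim\epsilon^{-1}M_\mD(M_\mD g)$. The Corona generations are not level sets of a single function of $x$, so a layer-cake in the threshold parameter does not obviously reconstruct $\|g\|_p$; the $A_\infty$-weighted square-function route is the missing idea.
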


We remark that the extension operator $g\mapsto u$ is non-linear, even though $\gamma$ itself of course is linear.


The corresponding trace result in the case $p=\infty$, proved by Varopoulos~\cite{Var1, Var2} is that there is a bounded and surjective trace map
$$
    \|u|_{\R^n}\|_{\bmo(\R^n)}\lesssim \|C(\nabla u) \|_{L_\infty(\R^n)},
$$
and a corresponding non-linear bounded extension operator,
where $\bmo(\R^n)$ stands for the John--Nirenberg space of functions of bounded mean
oscillation.
Following Varopoulos~\cite{Var2}, we obtain the extensions in Theorem~\ref{thm:1} from a result on approximate extensions of Lebesgue functions on $\R^n$.
This main component of Theorem~\ref{thm:1}, contained in our Theorem~\ref{thm:2}, generalizes well known techniques in the end point case $p=\infty$ related to the Corona Theorem, first proved by Carleson~\cite{Carleson:62}.
Our proof of Theorem~\ref{thm:2} though, is more in the spirit of Garnett~\cite[Ch. VIII, Thm. 6.1]{Gar}.

The statement below refers to dyadic versions of the non-tangential maximal functional, the Carleson functional and the Hardy--Littlewood maximal functional, defined respectively by
\begin{equation*}
\begin{split}
  N_{\mathcal D}f(x) &:=\sup_{Q:x\in Q\in\mathcal D}\sup_{(t,y)\in W_Q}\abs{f(t,y)},\qquad W_Q:=(\ell(Q)/2, \ell(Q))\times Q, \\
  C_{\mathcal D}f(x) &:=\sup_{Q:x\in Q\in\mathcal D}\frac{1}{\abs{Q}}\int_{\widehat Q}\abs{f(t,y)}dt\,dy,\qquad \widehat Q:=(0, \ell(Q))\times Q \\
  M_{\mathcal D}g(x) &:=\sup_{Q:x\in Q\in\mathcal D}\frac{1}{\abs{Q}}\int_{Q}\abs{g(y)}dy, \\
\end{split}
\end{equation*}
where $\mathcal D$ is a system of dyadic cubes in $\R^n$.

\begin{thm}  \label{thm:2}
  Fix $1<p<\infty$.
  Consider $g\in L_p(\R^n)$ and define the dyadic average extension
$$
  u(t,x):= \fint_Q g(y) dy, \qquad (t,x)\in W_Q,
$$
where $W_Q:= (\ell(Q)/2, \ell(Q))\times Q$ denotes the dyadic Whitney region above a dyadic cube $Q\subset \R^n$ of side length $\ell(Q)$.
Then, for any $0< \epsilon<1$, there exists  $f:\R^{1+n}_+\to\R$ which is constant on each dyadic Whitney region, with
pointwise estimates
\begin{equation*}
  \begin{cases}
    N_\mD(f-u)\le\epsilon M_\mD g, \\
    C_\mD(\nabla f)\lesssim \epsilon^{-1}M_\mD(M_\mD g),
\end{cases}
\end{equation*}
and implied norm estimates
$$
\begin{cases}
    \|N(f-u)\|_{L_p(\R^n)}\le \epsilon \|g\|_{L_p(\R^n)}, \\
    \|C(\nabla f)\|_{L_p(\R^n)}\lesssim \epsilon^{-1} \|g\|_{L_p(\R^n)}.
\end{cases}
$$
Moreover, for any fixed $\alpha<\infty$, non-tangential limits $\lim_{(t,y)\to(0,x), |y-x|<\alpha t}f(t,y)=:f(0,x)$ exist almost everywhere,
so that $\|f(0,\cdot)-g\|_{L_p(\R^n)}\le \epsilon \|g\|_{L_p(\R^n)}$.
\end{thm}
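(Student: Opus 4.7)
The approach is a Corona-type stopping time construction in the spirit of Varopoulos~\cite{Var2} and Garnett~\cite[Ch.~VIII]{Gar}, adapted to the $L_p$ setting. Set $a_Q:=\fint_Q g$, so that $u\equiv a_Q$ on the dyadic Whitney region $W_Q$. After a routine truncation reducing to the case where $g$ is supported in a large dyadic cube $Q_0$, construct a nested family $\mathcal{S}$ of stopping cubes iteratively from $Q_0$: for each $S\in\mathcal{S}$, declare the maximal dyadic $Q\subsetneq S$ to be a stopping child of $S$ whenever either (i) $|a_Q-a_S|>\epsilon\fint_S|g|$, or (ii) $\fint_Q|g|>2\fint_S|g|$. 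Rule (i) controls the oscillation of the averages, while rule (ii) packs the chain of $|g|$-averages in a Calder\'on--Zygmund fashion. Let $\pi(Q)\in\mathcal{S}$ denote the smallest stopping cube containing $Q$, and set $f\equiv a_{\pi(Q)}$ on $W_Q$.

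The approximation estimate is the easier direction. On $W_Q$ one has $|f-u|=|a_{\pi(Q)}-a_Q|$; since no dyadic cube strictly between $Q$ and $\pi(Q)$ is stopping, rule (i) gives $|a_{\pi(Q)}-a_Q|\le\epsilon\fint_{\pi(Q)}|g|\le\epsilon Mg(x)$ for any $x\in Q$. Hence $N(f-u)\lesssim\epsilon Mg$ pointwise, and the Hardy--Littlewood maximal theorem yields $\|N(f-u)\|_p\le C_p\epsilon\|g\|_p$; after rescaling $\epsilon$ to absorb $C_p$ we obtain the desired bound. Non-tangential convergence at each Lebesgue point of $g$ follows because the dyadic averages $a_{\pi(Q_t)}$ along cubes $Q_t\owns x$ with $\ell(Q_t)\sim t$ converge to $g(x)$, and the $L_p$ bound $\|f(0,\cdot)-g\|_p\le\epsilon\|g\|_p$ follows in turn from the non-tangential estimate by letting $t\to 0^+$.

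The technical heart of the proof, and the main obstacle, is the Carleson bound $\|C(\nabla f)\|_p\lesssim\epsilon^{-1}\|g\|_p$. Since $f$ is piecewise constant on the Whitney decomposition, $\nabla f$ is a surface measure on the Whitney skeleton; a nonzero jump occurs only across boundaries of $W_S$ for $S\in\mathcal{S}$, of magnitude $J_S:=|a_S-a_{\sigma(S)}|$, where $\sigma(S)\in\mathcal{S}$ is the stopping parent of $S$. Bookkeeping over these faces (each of area $\sim|S|$) gives
\begin{equation*}
\iint_{\hat Q}|\nabla f|\ \lesssim\ \sum_{S\in\mathcal{S},\,S\subset Q}J_S\,|S|,
\end{equation*}
so $C(\nabla f)\le C\,M_d\eta$ with $\eta:=\sum_{S\in\mathcal{S}}J_S\mathbf{1}_S$ and $M_d$ the dyadic maximal operator; the $L_p$-boundedness of $M_d$ reduces the problem to $\|\eta\|_p\lesssim\epsilon^{-1}\|g\|_p$. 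My plan is to prove this by duality: for $h\in L_{p'}$ with $\|h\|_{p'}=1$, write $\int h\eta=\sum_S J_S|S|\fint_S h$, then use the telescoping identity $\sum_{S\in\mathcal{S}:\,\sigma(S)=R}J_S|S|\le\int_R|g-a_R|$ valid for each fixed $R\in\mathcal{S}$, and combine it with the rule-(ii) Carleson packing of $\mathcal{S}$ through a sparse/Carleson-embedding estimate, so as to bound $\int h\eta\lesssim\epsilon^{-1}\|g\|_p\|h\|_{p'}$. The $\epsilon^{-1}$ factor is the signature of rule (i): the stopping threshold $\epsilon\fint_S|g|$ inversely governs how tightly $\mathcal{S}$ is packed, and this propagates into the Carleson embedding constant. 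Making this sparse/packing argument precise is where the principal effort lies.
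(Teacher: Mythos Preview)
Your construction and the $N(f-u)$ bound are fine, and the reduction $C(\nabla f)\lesssim M_{\mD}\eta$ with $\eta=\sum_{S\in\mathcal S}J_S 1_S$ is correct for the $\partial_t$ part of the gradient (the horizontal jumps across lateral Whitney faces need a separate telescoping argument, as in Lemma~\ref{lem:tanggradest}, but this is minor). The gap is in the bound $\|\eta\|_p\lesssim\epsilon^{-1}\|g\|_p$.

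Your telescoping identity $\sum_{\sigma(S)=R}J_S|S|\le\int_R|g-a_R|\le 2|R|\fint_R|g|$ is correct, but it does not combine with the duality as you suggest: the factor $\fint_S h$ in $\sum_S J_S|S|\fint_S h$ depends on the child $S$, not on $R=\sigma(S)$, so it does not pull outside the inner sum. Even if you sum the telescoping bounds over $R$ without $h$, you are left with $\sum_{R\in\mathcal S}|R|\fint_R|g|$, and this requires the full family $\mathcal S$ to be Carleson. Rule (ii) makes the rule-(ii) children sparse, but the rule-(i) children of a given $R$ satisfy only $\sum_{\sigma(S)=R,\ \text{rule (i)}}|S|\le 2\epsilon^{-1}|R|$, which is the wrong direction: since $\epsilon<1$ this is no packing condition at all. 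So neither the ``sparse'' nor the ``Carleson embedding'' route is available, and the source of the $\epsilon^{-1}$ you describe is precisely the obstruction rather than the mechanism.

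The paper circumvents this entirely. It uses a different stopping threshold, $|u_R-u_Q|\ge\epsilon M_{\mD}g(R)$ with the maximal function evaluated at the \emph{child} cube $R$, and then exploits the resulting \emph{lower} bound on each jump to convert the linear sum into a square:
\[
   \sum_{Q\in\omega_*,\,Q\subset Q_1} J_Q\,|Q|
   \le \frac{1}{\epsilon}\sum_{Q\in\omega_*,\,Q\subset Q_1} J_Q^2\,\frac{|Q|}{M_{\mD}g(Q)}
   \lesssim \frac{1}{\epsilon}\int_{Q_1}|S_{\omega_*}g|^2\,\frac{dx}{M_{\mD}g(x)} ,
\]
the last step using Lemma~\ref{lem:truncmax}. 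One then recognizes $(M_{\mD}g)^{-1}$ as an $A_\infty$ weight and invokes a weighted stopped-square-function estimate (Proposition~\ref{prop:stoppedweightedSF}) to bound this by $\epsilon^{-1}\int_{Q_1}M_{\mD}g$, i.e.\ $C_\mD(\partial_t f)\lesssim\epsilon^{-1}M_\mD(M_\mD g)$ pointwise. The passage to a square function is the missing idea; your linear telescoping has no analogue of the orthogonality that makes the martingale square function bounded on $L_2$, and your rule (ii) does not supply it.
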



That the construction of approximate extensions $f$ as above with control of $C(\nabla f)$ is
indeed non-trivial, can be seen as follows.
Consider a ``lacunary'' function, which in a standard Haar basis would mean something like
$$
  u(x):= \sum_{Q\subset (0,1), |Q|\ge 2^{-k}} (\chi_{Q^l}(x)-\chi_{Q^r}(x)), \qquad x\in\R,
$$
where the sum is over dyadic subintervals of $(0,1)$ of length at least $2^{-k}$,
and the summand involves the characteristic functions of the left and right dyadic children of $Q$.
Then one checks that $\|u\|_p\lesssim \sqrt k$, whereas
the dyadic average extension $u(t,x)$ is seen to satisfy
$$
  C(\nabla u)(x)\gtrsim k, \qquad \text{for all } x\in(0,1).
$$
Therefore the dyadic average extension, or the closely related Poisson extension,
will not satisfy the required estimates.
Instead, Theorem~\ref{thm:2} is proved using a stopping time construction,
where we modify the stopping condition used in endpoint $\bmo$ case.

The second main result that we prove in this paper is an approximation result analogous to
Theorem~\ref{thm:2}, but with the dyadic martingale
$u$ replaced by a solution $u$ to an elliptic divergence form equation $\divv A\nabla u=0$ in $\R^{1+n}_+$.
This generalizes results in the end point case $p=\infty$ by Garnett~\cite{Gar} for the Laplace equation in $\R^2_+$, by Dahlberg~\cite{D3} for the Laplace equation on Lipschitz domains in $\R^n$, and by Kenig, Koch, Pipher and Toro~\cite{KKPT} and Hofmann, Kenig, Mayboroda and Pipher~\cite{HKMP} for divergence form equations on Lipschitz domains.

\begin{thm}  \label{thm:3}
  Fix $1<p<\infty$ and coefficients $A\in L_\infty(\R^n;\mL(\R^{1+n}))$ which are accretive in the sense
  that there exists $\lambda_A>0$ such that
  $$
     (A(x)v,v)\ge \lambda_A|v|^2,
  $$
  for almost all $x\in\R^n$ and all $0\ne v\in \R^{1+n}$.
Then for any $0< \epsilon<1$, there exists $c_\epsilon<\infty$ such that the following holds.
Given any weak solution $u:\R^{1+n}_+\to \R$ to the $t$-independent real scalar, but possibly non-symmetric, divergence form elliptic equation
$$
  \divv_{t,x} A(x)\nabla_{t,x} u(t,x)=0,
$$
with estimates $Nu\in L_p(\R^n)$,
there exists a function $f$ in $\R^{1+n}_+$ of locally bounded variation, with estimates
$$
\begin{cases}
    \|N(f-u)\|_{L_p(\R^n)}\le \epsilon \|Nu\|_{L_p(\R^n)}, \\
    \|C(\nabla f)\|_{L_p(\R^n)}\leq c_{\epsilon} \|Nu\|_{L_p(\R^n)}.
\end{cases}
$$
\end{thm}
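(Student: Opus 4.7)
The plan is to adapt the stopping-time construction of Theorem~\ref{thm:2} from dyadic martingales to $t$-independent divergence-form elliptic solutions, using interior regularity theory as the substitute for the exact martingale cancellation. I would first replace the dyadic averages $\fint_Q g$ of a boundary function by the interior averages $\alpha_Q := \fint_{W_Q} u\,dsdy$ over dyadic Whitney regions. Moser's Harnack inequality and De Giorgi--Nash--Moser H\"older continuity (applicable since $A$ is bounded and accretive) yield $\osc_{W_Q} u \lesssim \inf_{cQ} Nu$ with constant depending only on $\lambda_A$, $\|A\|_\infty$ and the dimension. Setting $u_\mD(t,x):=\alpha_Q$ for $(t,x)\in W_Q$ thus produces a piecewise-constant proxy with $|u-u_\mD|$ controlled by a local oscillation that will be absorbed into the $\epsilon \|Nu\|_p$ error at the end.

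Starting from a fixed root cube, I build a stopping-time tree: a descendant $Q$ is declared stopped, relative to its last stopping ancestor $Q^*$, as soon as $|\alpha_Q - \alpha_{Q^*}|$ exceeds $\epsilon$ times a quantity comparable to $\inf_{Q^*} Nu$. Write $\mathcal S$ for the resulting family of stopping cubes and define $f(t,x):=\alpha_{Q^*}$ on the sawtooth region lying below $Q^*$ but above its stopped descendants. Then $f$ is constant on each sawtooth, hence $f\in\bvl(\R^{1+n}_+)$, and $\nabla f$ is a measure carried by the interior interfaces between adjacent sawtooths, with jumps of size $|\alpha_{(Q')^*}-\alpha_{Q^*}|$. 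The estimate $\|N(f-u)\|_p\le \epsilon\|Nu\|_p$ then follows by combining the stopping criterion, which controls $|f - u_\mD|$, with the oscillation bound from the previous step, which controls $|u - u_\mD|$, after reabsorbing absolute constants into $\epsilon$.

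The Carleson bound $\|C(\nabla f)\|_p\le c_\epsilon\|Nu\|_p$ reduces, via the jump structure of $\nabla f$ and the Moser-type comparison $|\alpha_{Q^*}|\lesssim \inf_{cQ^*} Nu$, to a packing estimate on the stopping cubes in $\mathcal S$ that mirrors the one in the proof of Theorem~\ref{thm:2}. The main obstacle is that the martingale proof leaned on the exact Haar telescoping identity $\alpha_Q-\alpha_{\hat Q}=\langle g,h_Q\rangle$, which made the underlying $L_2$ Carleson bound essentially Bessel's inequality. In the PDE setting this clean orthogonality is absent, and one must substitute the Caccioppoli inequality together with De Giorgi--Nash--Moser H\"older regularity to obtain a quasi-orthogonality controlling $\alpha_Q-\alpha_{\hat Q}$ by the local energy of $\nabla u$ on a slightly enlarged Whitney region; dovetailing this quasi-orthogonality with the stopping-time packing, and then extrapolating to general $1<p<\infty$, is the technical core of the argument.
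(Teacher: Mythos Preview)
Your outline has two genuine gaps. First, the passage from $u$ to the piecewise-constant proxy $u_{\mD}$ does \emph{not} cost only $\epsilon\|Nu\|_p$. De Giorgi--Nash--Moser gives $\osc_{W_Q}u\lesssim \inf_{cQ}Nu$ with a \emph{structural} constant depending on $\lambda_A,\|A\|_\infty,n$; there is no smallness to ``reabsorb into $\epsilon$''. Concretely, if the boundary datum has a jump, the solution oscillates by an amount comparable to $Nu$ on every Whitney region straddling the jump, so a purely piecewise-constant $f$ cannot satisfy $N(f-u)\le\epsilon Nu$ there. The paper deals with this by singling out the collection $\mathcal R=\{R:\osc_{W_R}u>\epsilon\,M_{\mD^\delta}(Nu)(R)\}$ and setting $f=u$ on $\bigcup_{R\in\mathcal R}W_R$ (the correction $\varphi_2$); one must then separately prove that $\mathcal R$ is Carleson and that $\iint_{W_R}|\nabla u|\lesssim |R|\inf_R Nu$ via Caccioppoli. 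Without this correction the $N$-estimate fails.

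Second, Caccioppoli plus interior H\"older regularity are too weak to yield the Carleson packing of the stopping cubes $\mathcal S$. What you need is a bound of the form $\sum_{S'\in\operatorname{ch}_{\mathcal S}(S)}|u(p_{S'})-u(p_S)|^2|S'|\lesssim |S|\,M(Nu)(S)^2$, summable over generations. The paper obtains this not from any local ``quasi-orthogonality'' but from the two-sided local square-function estimates of Hofmann--Kenig--Mayboroda--Pipher: a boundary Poincar\'e/$N\lesssim S$ bound over a Lipschitz sawtooth graph $\psi_S^1$ turns the left side into $\iint_{\{t>\psi_S^1\}\cap\widehat S}|\nabla u|^2(t-\psi_S^1)$, and then the $S\lesssim N$ bound (a genuinely global Carleson-measure estimate, equivalent in this setting to $A_\infty$ of elliptic measure) converts this back to $|S|\sup|u|^2$. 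Caccioppoli and DGN are purely interior and cannot produce the $S\lesssim N$ direction. You are also missing two auxiliary layers that make this work: a preliminary family of \emph{principal cubes} $\mathcal P$ (stopping on growth of $M_{\mD^\delta}(Nu)$) so that $M_{\mD^\delta}(Nu)(Q)\approx M_{\mD^\delta}(Nu)(P)$ throughout each $\mathcal P$-block, and a geometric ``centred/uncovered'' selection (requiring the sparsified grid $\mD^\delta$, $\delta=2^{-N}$) that lets you place the stopping children along a single Lipschitz graph where the HKMP estimates apply.
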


We shall informally refer to such an $f$ as an \emph{approximant} of $u$. Note that the functions $f$ and $u$ share the same domain of definition, $\R^{1+n}_+$, and no extensions are involved here, in contrast to Thm. \ref{thm:2}, which established an \emph{approximate extension} $f$ on $\R^{1+n}_+$ of an initial function $g$ on $\R^n$. While Thm. \ref{thm:2} also featured a function $u$ on $\R^{1+n}_+$ in a seemingly similar role as in Thm. \ref{thm:3}, the actual position of $u$ in Thm. \ref{thm:2} was mainly auxiliary, as an intermediate object in the construction of $f$, while in Thm. \ref{thm:3} we regard the solution $u$ itself, rather than its boundary limit, as the primary object of interest that we wish to approximate.

Again, our proof in fact gives the pointwise bounds
\begin{equation*}
  N_{\mD^\delta}(f-u)\le \epsilon M_{\mD^\delta}(Nu)\quad \text{and}\quad
  C_{\mD^\delta}(\nabla f)\leq c_\epsilon M_{\mD^\delta}(Nu),
\end{equation*}
where $\mD^\delta$ denotes modified dyadic versions of maximal and Carleson functionals, see Section~\ref{sec:solutions},
from which the asserted norm bounds are immediate by the maximal inequality.
The dependence of $c_\epsilon$ on $\epsilon$ given by our proof, is certainly worse than $1/\epsilon$.
We also note that, when $p$ is large enough, it follows from the work \cite{HKMP} that the $L_p$ Dirichlet problem is well posed since the $L$-harmonic measure, for $L= -\divv_{t,x} A(x)\nabla_{t,x}$, is in $A_\infty(dx)$, and therefore the norm $\|Nu\|_{L_p(\R^n)}$ can be replaced by $\|u|_{\R^n}\|_{L_p(\R^n)}$ in Theorem~\ref{thm:3} in this case.

The previously studied end point case $p=\infty$ of Theorem~\ref{thm:3} is a key tool in the study of boundary value problems for elliptic equations $Lu=0$ as above in  \cite{KKPT, HKMP}.
The existence of such an approximant $f\approx u$ in the $L_\infty$ norm is referred to as \emph{$\epsilon$-approximability} (of solutions), and it is proved in
 \cite{KKPT} that it implies the $A_\infty$ property of the harmonic measure.
However, the latter is well known by Dahlberg, Jerison and Kenig~\cite{DJK} to imply the comparability of non-tangential
maximal functions  and square functions of solutions $u$, which in turn is a key tool in  \cite{KKPT, HKMP} in the construction
of approximants $f\approx u$.
Therefore bounded variation approximability $f\approx u$, $A_\infty$ control of harmonic measure and $N\approx S$ comparability turn out to be equivalent. The importance of the approximability property, through this circle of arguments in the $p=\infty$ case, motivates our generalization in Theorem~\ref{thm:3} to the case $p<\infty$.
We show that these three properties are also equivalent 
when $p=\infty$ in the approximability property is replaced by $n/(n-1) \leq p<\infty$.
See Section~\ref{sec:harmmeas} for the detailed statements. In particular, we extend \cite[Thm. 2.3]{KKPT} to
$n/(n-1)\leq p<\infty$.

The outline of the paper is as follows. In Section~\ref{sec:NAC}, we survey the basic
estimates for the functionals defining our spaces. In Section~\ref{sec:ext} we deduce
Theorem~\ref{thm:1} from Theorem~\ref{thm:2}, and prove the latter using
a weighted stopped square function estimate,
the proof of which is in Section~\ref{sec:stoppedsqfcnmart}.
Finally in Section~\ref{sec:solutions} we prove Theorem~\ref{thm:3}, using local $N\approx S$
estimates which we borrow from \cite{HKMP}, as a replacement of the stopped square function estimates which we used in the martingale case of Theorem~\ref{thm:2}.

\section{The basic functionals}   \label{sec:NAC}

In this section, we collect well known facts concerning the functionals that we use to define several norms of functions
in the half space $\R^{1+n}_+$.

First we fix notation.
We write the $L_p(\R^n)$ norm as $\|\cdot\|_p$.
Cubes in $\R^n$ (dyadic or not) we denote by $Q, R, S, \ldots$, and we assume that these are open.
The Carleson box above a cube $Q\subset \R^n$ is denoted
$$
  \widehat Q:= (0,\ell(Q))\times Q\subset \R^{1+n}_+,
$$
where $\ell(Q)$ denotes the sidelength of $Q$.
We write $cQ$ to denote the cube with same center as $Q$ but with $\ell(cQ)= c\ell(Q)$.

Let $\mD= \bigcup_{j\in \Z}\mD_j$ denote a system of dyadic cubes in $\R^n$, with
$\mD_j$ being the cubes of sidelength $\ell(Q)=2^{-j}$, such that the dyadic cubes in $\mD$ form a connected tree under inclusion.
Let $W_Q:= (\ell(Q)/2, \ell(Q))\times Q$ denote dyadic Whitney regions.
The corresponding non-dyadic Whitney region around a point $(t,x)\in\R^{1+n}_+$ we define to be
$$
W(t,x):= \sett{(s,y)}{c_0^{-1}<s/t<c_0, |y-x|<c_1t},
$$
where $c_0>1$ and $c_1>0$ are fixed parameters.

The Hardy-Littlewood maximal function of $f\in L_1^\loc(\R^n)$ that we use is
$$
 Mf(x):= \sup_{Q\ni x} \fint_Q |f(y)| dy,
$$
where the supremum in $Mf$ is over all cubes $Q\subset \R^n$ containing $x$.
Restricting the cubes to the dyadic ones in the supremum yields the dyadic Hardy-Littlewood maximal function
$M_\mD f(x)$.
We also require the following truncated (to large cubes) version of the dyadic Hardy-Littlewood maximal function:

\begin{equation}   \label{eq:truncmax}
  M_\mD f(Q)=  \sup_{R\supset Q, R\in \mD} \fint_R |f(y)| dy,\qquad Q\in\mD.
\end{equation}

\begin{defn}   \label{defn:contfuncNCA}
For a locally integrable function $f(t,x)$ in $\R^{1+n}_+$ we define, for $x\in\R^n$,
the {\em non-tangential maximal functional} $Nf$,  the {\em Carleson functional} $Cf$ and
the {\em area functional} $Af$ as
\begin{gather*}
  Nf(x):= \esssup_{|y-x|<\alpha t} |f(t,y)|, \\
  Cf(x):= \sup_{Q\ni x} |Q|^{-1} \iint_{\widehat Q} |f(t,y)| dtdy, \\
  Af(x):= \iint_{|y-x|<\alpha t} |f(t,y)| t^{-n}dtdy.
\end{gather*}
Here the supremum in $Cf$ is over all cubes $Q\subset \R^n$ containing $x$.
 In the definition of $Nf$ and $Af$, the parameter $\alpha>0$ denotes some fixed aperture of the cones. To emphasize the exact dependence of the aperture, we sometimes write $N^{(\alpha)}$ and $A^{(\alpha)}$.

For a function $f(t,x)$ in $\R^{1+n}_+$, having constant value $f_Q$ on each dyadic Whitney region $W_Q$, we also define dyadic versions of these functionals by
\begin{gather*}
  N_\mD f(x):= \sup_{x\in Q\in\mD} |f_Q|, \\
  C_\mD f(x):= \sup_{x\in Q\in \mD} |Q|^{-1} \sum_{R\in\mD, R\subset Q} |f_R| |W_R|, \\
  A_\mD f(x):= \sum_{x\in Q\in\mD} |f_Q| \ell(Q).
\end{gather*}
\end{defn}

The reader is invited to check that these definition agree with those given in the Introduction without assuming the constant values over the regions $W_Q$.

We want to point out that, throughout this paper, we are using the measure $dtdx$ and not the measure $t^{-1}dtdx$, although the latter is quite common in the literature.
Note that the functionals $A$ and $C$ extend in a natural way to the case when $f$ is a signed measure on $\R^{1+n}_+$, and in particular to the case of gradients of functions of locally bounded variation.

We record the following norm equivalences between different choices for the aperture of the cones.

\begin{prop}  \label{prop:aperture}
Fix $0<\alpha, \beta<\infty$.
Then for any $1\le p\le \infty$, we have
$
\|N^{(\alpha)}f\|_p \approx \|N^{(\beta)}f\|_p,
$
and for any $1\le p< \infty$, we have
$
  \|A^{(\alpha)}f\|_p \approx  \|A^{(\beta)}f\|_p.
$
\end{prop}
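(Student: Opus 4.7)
The plan is to prove both equivalences by covering arguments reducing to the Hardy--Littlewood maximal inequality. In each case, the inequality $\|\cdot^{(\alpha)}f\|_p\lesssim\|\cdot^{(\beta)}f\|_p$ for $\alpha\le\beta$ is immediate from the pointwise nesting of the cones, so I assume $\alpha<\beta$ and prove the reverse direction.

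For the non-tangential maximal function, fix $\lambda>0$ and set $E_\lambda:=\{x:N^{(\alpha)}f(x)>\lambda\}$. If $N^{(\beta)}f(x)>\lambda$, choose $(t,y)$ with $|y-x|<\beta t$ and $|f(t,y)|>\lambda$. Every $z\in B(y,\alpha t)$ then satisfies $|y-z|<\alpha t$, so $z\in E_\lambda$. Since $B(y,\alpha t)\subset B(x,(\alpha+\beta)t)$, setting $\eta:=\bigl(\alpha/(\alpha+\beta)\bigr)^n$ we obtain the density bound $|E_\lambda\cap B(x,(\alpha+\beta)t)|\ge\eta\,|B(x,(\alpha+\beta)t)|$, and hence $\{N^{(\beta)}f>\lambda\}\subset\{M\chi_{E_\lambda}\ge c_n\eta\}$ (with a dimensional constant $c_n$ accounting for the ball-vs-cube comparison in the definition of $M$). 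The weak-$(1,1)$ bound for $M$ gives $|\{N^{(\beta)}f>\lambda\}|\lesssim|E_\lambda|$, and integrating $p\lambda^{p-1}\,d\lambda$ delivers the $L_p$ estimate for $1\le p<\infty$; the case $p=\infty$ is immediate from essential suprema.

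For the area functional, the case $p=1$ is just Fubini: both norms equal a dimensional constant times $\gamma^n\iint_{\R^{1+n}_+}|f(t,y)|\,dt\,dy$, with $\gamma$ the respective aperture, so they are comparable. For general $1<p<\infty$ I would adopt a good-$\lambda$ argument in the spirit of Coifman--Meyer--Stein: the same geometric fact $B(y,\alpha t)\subset B(x,(\alpha+\beta)t)$ with density $\eta$ allows one to compare the truncated cone integrals defining $A^{(\beta)}f$ with averages of $A^{(\alpha)}f$ over a controlled neighborhood, leading to
$$\bigl|\{A^{(\beta)}f>K\lambda\}\setminus\{M\chi_{\{A^{(\alpha)}f>\lambda\}}>\eta\}\bigr|=0$$
for appropriate $K$ and $\eta$ depending on $\alpha,\beta,n$. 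Together with the $L_p$ boundedness of $M$ this yields $\|A^{(\beta)}f\|_p\lesssim\|A^{(\alpha)}f\|_p$.

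The main obstacle is precisely this good-$\lambda$ step: in contrast to the non-tangential case, there is no clean pointwise bound of $A^{(\beta)}f$ by a maximal operator applied to $A^{(\alpha)}f$, because the scale of the relevant comparison ball $B(x,(\alpha+\beta)t)$ varies with $t$, whereas $M$ averages at a single scale at each point. Executing the excision of the bad portion of the cone over $x$ in a quantitatively correct way --- typically via a dyadic stopping time or a tent-space duality argument --- is where the actual technical work lies.
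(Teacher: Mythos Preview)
Your treatment of $N^{(\alpha)}\approx N^{(\beta)}$ is correct and is exactly the Fefferman--Stein argument the paper cites.

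For the area functional you are making the problem harder than necessary, and you yourself flag that the good-$\lambda$ step is not actually carried out. The paper avoids this difficulty entirely by a short duality argument (also from Coifman--Meyer--Stein, but not the good-$\lambda$ part). For $1<p<\infty$, pair $A^{(\beta)}f$ against a nonnegative $h$ with $\|h\|_{p'}=1$ and apply Fubini:
\[
  \|A^{(\beta)}f\|_p=\iint_{\R^{1+n}_+}|f(t,y)|\Big(t^{-n}\int_{|x-y|<\beta t}h(x)\,dx\Big)dt\,dy.
\]
The inner average of $h$ over $B(y,\beta t)$ is dominated by $Mh(z)$ for \emph{every} $z\in B(y,\alpha t)$, hence by $t^{-n}\int_{|z-y|<\alpha t}Mh(z)\,dz$ up to a constant. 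Undoing Fubini gives $\int_{\R^n} A^{(\alpha)}f\cdot Mh\le\|A^{(\alpha)}f\|_p\|Mh\|_{p'}\lesssim\|A^{(\alpha)}f\|_p$. This three-line proof sidesteps precisely the ``scale of the comparison ball varies with $t$'' obstacle you describe: once you move to the dual side, $t$ is fixed inside the $(t,y)$-integral, and all that is needed is the trivial pointwise domination of an average by the maximal function. No stopping time, no excision, no distributional inequality is required. Your $p=1$ observation via Fubini is fine and is the natural complement to this argument.
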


\begin{proof}
The estimates for $Nf$ are proved in Fefferman and Stein~\cite[Lem. 1]{FS}.
To prove the estimate for the $A$-functional, we follow
Coifman, Meyer and Stein~\cite[Prop. 4, case $2\le p<\infty$]{CMS} and consider $0<\alpha<\beta<\infty$: Dualize against $\|h\|_{p'}=1$ to get
\begin{multline*}
  \|A^{(\beta)} f\|_p= \int_{\R^n}\left( \iint_{|y-x|<\beta t}|f(t,y)| t^{-n}dtdy \right) h(x) dx\\
  = \iint_{\R^{1+n}_+} |f(t,y)| \left( t^{-n}\int_{|x-y|<\beta t}h(x) dx  \right) dtdy \\
  \lesssim  \iint_{\R^{1+n}_+} |f(t,y)| \left(t^{-n} \int_{|x-y|<\alpha t} Mh(x) dx  \right) dtdy \\
  = \int_{\R^n} A^{(\alpha)} f(x) Mh(x) dx\lesssim \|A^{(\alpha)} f\|_p.
\end{multline*}

\end{proof}

We also record the following equivalence of norms between the corresponding dyadic and non-dyadic functionals.
\begin{prop}   \label{prop:dyadicvsnondya}
 We have
\begin{align*}
  \|N f\|_p\approx \|N_\mD f\|_p, & \qquad 1\le p\le \infty, \\
  \|C f\|_p\approx \|C_\mD f\|_p, & \qquad 1< p\le \infty, \\
  \|A f\|_p\approx \|A_\mD f\|_p, & \qquad 1\le p< \infty,
\end{align*}
uniformly for all functions $f(t,x)$ in $\R^{1+n}_+$ that are constant on each dyadic Whitney region.
\end{prop}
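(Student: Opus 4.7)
My plan is to prove the three equivalences case by case, by similar but distinct arguments. In each case I compare the non-dyadic functional at $x$ with its dyadic counterpart through an elementary geometric comparison plus, where needed, the Hardy--Littlewood maximal operator; the sharp $p$-range in each case is dictated by whether $M$ ends up acting on $L_p$ or on $L_{p'}$.

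For $N$, the bound $N_\mD f\le Nf$ is pointwise: if $x\in Q\in\mD$ and $t\in(\ell(Q)/2,\ell(Q))$, then $(t,x)\in W_Q$ lies in the cone of any aperture $\alpha>0$ at $x$, hence $|f_Q|=|f(t,x)|\le N^{(\alpha)}f(x)$, and the supremum over $Q\ni x$ gives the inequality. For the reverse I would pass to distribution functions. Letting $\{Q_i\}$ be the maximal dyadic cubes with $|f_{Q_i}|>\lambda$, so that $\{N_\mD f>\lambda\}=\bigsqcup_i Q_i$, the observation that $(t,y)\in W_Q$ together with $|y-x|<\alpha t$ forces $\mathrm{dist}(x,Q)<\alpha\ell(Q)$ shows that $\{N^{(\alpha)}f>\lambda\}$ is contained in $\bigcup_i\{x:\mathrm{dist}(x,Q_i)<\alpha\ell(Q_i)\}$, a set of measure at most $C(\alpha,n)\sum_i|Q_i|=C(\alpha,n)|\{N_\mD f>\lambda\}|$. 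The layer-cake formula then produces the $L_p$ comparison for every $1\le p\le\infty$.

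For $C$, the key point is that for dyadic $Q$ the Carleson box decomposes, up to null sets, as $\widehat Q=\bigsqcup_{R\in\mD,\,R\subset Q}W_R$. Thus restricting the supremum in $Cf$ to dyadic cubes through $x$ recovers $C_\mD f(x)$ exactly, giving $C_\mD f\le Cf$ pointwise. For the reverse I would cover an arbitrary cube $Q\ni x$ by $O(1)$ dyadic cubes $Q_1,\dots,Q_K$ of sidelength in $[\ell(Q),2\ell(Q))$ and apply the above decomposition on each $\widehat{Q_i}$ to estimate
\begin{equation*}
  |Q|^{-1}\iint_{\widehat Q}|f|\,dt\,dy\lesssim\sum_{i=1}^K|Q_i|^{-1}\iint_{\widehat{Q_i}}|f|\,dt\,dy\le\sum_i\fint_{Q_i}C_\mD f(z)\,dz.
\end{equation*}
Since each $Q_i$ lies in a fixed dilate of $Q$ around $x$, the right-hand side is dominated by $M(C_\mD f)(x)$, and the $L_p$-boundedness of $M$ for $1<p\le\infty$ then yields $\|Cf\|_p\lesssim\|C_\mD f\|_p$ and explains the endpoint restriction.

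For $A$, I would take aperture $\alpha\ge 2\sqrt n$, under which $W_Q$ lies entirely in the cone $\{(t,y):|y-x|<\alpha t\}$ whenever $x\in Q$, because $(t,y)\in W_Q$ then satisfies $|y-x|\le\sqrt n\,\ell(Q)\le 2\sqrt n\,t$. A direct computation of $\iint_{W_Q}t^{-n}\,dt\,dy\sim\ell(Q)$ yields the pointwise bound $A^{(\alpha)}f(x)\ge c_n A_\mD f(x)$, and Proposition~\ref{prop:aperture} transfers this to $\|A_\mD f\|_p\lesssim\|Af\|_p$. For the reverse I would dualize: testing against $g\ge 0$ with $\|g\|_{p'}=1$, swapping the order of integration and using $t^{-n}\int_{|x-y|<\alpha t}g(x)\,dx\lesssim Mg(y)$ leads to
\begin{equation*}
  \int A^{(\alpha)}f\cdot g\,dx\lesssim\sum_Q|f_Q|\,\tfrac{\ell(Q)}{2}\int_Q Mg\,dy=\tfrac12\int A_\mD f\cdot Mg\,dx\lesssim\|A_\mD f\|_p\|g\|_{p'},
\end{equation*}
where the $L_{p'}$-boundedness of $M$ forces $p<\infty$, matching the stated range. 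The only step I anticipate requiring genuine care is the non-dyadic-to-dyadic reduction in the Carleson case, where the covering dyadic cubes must be chosen simultaneously of sidelength comparable to $\ell(Q)$ and close enough to $x$ for the resulting averages to be controlled by $M(C_\mD f)(x)$; once this is in place, the other two cases reduce either to distribution-function bookkeeping (for $N$) or to a single duality pairing (for $A$).
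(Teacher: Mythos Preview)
Your proposal is correct. For the area functional $A$, your duality argument (testing against $g$, swapping, and bounding the inner integral by $Mg$) is exactly the argument the paper gives; you additionally spell out the easy reverse direction $A_\mD f\lesssim A^{(\alpha)}f$ via the containment $W_Q\subset\{|y-x|<\alpha t\}$ for $x\in Q$, which the paper leaves implicit.

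For $N$ and $C$, the paper simply cites \cite{HR} and gives no details, so your self-contained arguments are a genuine addition rather than a different route to the same proof. Your approach for $N$ (pointwise $N_\mD f\le Nf$, then a distribution-function comparison via maximal dyadic cubes and the inclusion $\{Nf>\lambda\}\subset\bigcup_i\{\dist(\cdot,Q_i)<\alpha\ell(Q_i)\}$) and for $C$ (pointwise $C_\mD f\le Cf$, then the pointwise bound $Cf\lesssim M(C_\mD f)$ via covering an arbitrary cube by $O(1)$ dyadic cubes of comparable size) are both standard and sound; the appearance of $M$ on $C_\mD f$ rather than on a dual function is what forces $p>1$ here, as you note.
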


\begin{proof}
  For proofs of the results for $N$ and $C$, we refer to \cite{HR}.
  Consider now the area functional $A$. As in the proof of Proposition~\ref{prop:aperture}, the proof is an adaption of \cite[Prop. 4, case $2\le p<\infty$]{CMS}.
Dualize against $\|h\|_{p'}=1$ to get
\begin{equation*}
\begin{split}
  \|A f\|_p &= \int_{\R^n}\left( \iint_{|y-x|<\alpha t}|f(t,y)| t^{-n}dtdy \right) h(x) dx\\
  &= \iint_{\R^{1+n}_+} |f(t,y)| \left(t^{-n} \int_{|x-y|<\alpha t}h(x) dx  \right) dtdy \\
 &\lesssim \sum_{Q\in\mD} |f_Q| |W_Q|\left(  \fint_Q (Mh)(x) dx\right)  \\
  &= \int_{\R^n} (A_\mD f) (Mh) dx\lesssim \|A_\mD f\|_p.\qedhere
\end{split}
\end{equation*}
\end{proof}

Less obvious is the following important $L_p$ equivalence of the $A$ and $C$ functionals.

\begin{prop}     \label{prop:AGequiv}
For $1\le p<\infty$, we have
$$
\|Af\|_p\lesssim \|Cf\|_p.
$$
For $1<p\le \infty$, we have
$$
\|Cf\|_p\lesssim \|Af\|_p
$$
(for any fixed aperture $\alpha$ in the case $p=\infty$).
\end{prop}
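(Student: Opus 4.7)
The plan is to prove the two directions by rather different methods.

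For $\|Cf\|_p \lesssim \|Af\|_p$ with $1 < p \le \infty$, I would establish the pointwise bound $Cf(x) \lesssim M(Af)(x)$ and then invoke the Hardy--Littlewood maximal inequality (at $p = \infty$ trivially, since $M$ maps $L_\infty$ to $L_\infty$). To get the pointwise bound, fix a cube $Q \ni x$ and insert the identity $1 \approx t^{-n}|B(y,\alpha t)|$ inside $\iint_{\widehat Q}|f|\,dtdy$; since $B(y,\alpha t) \subset (1+\alpha)Q$ for $(t,y)\in \widehat Q$, Fubini yields
$$\frac{1}{|Q|}\iint_{\widehat Q}|f(t,y)|\,dtdy \lesssim \frac{1}{|Q|}\int_{(1+\alpha)Q} Af(x')\,dx' \lesssim M(Af)(x),$$
and the supremum over $Q \ni x$ concludes.

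For $\|Af\|_p \lesssim \|Cf\|_p$ with $1 \le p < \infty$ there is no pointwise analogue, so the plan is to run a good-$\lambda$ argument in the dyadic setting. By Proposition~\ref{prop:dyadicvsnondya}, after replacing $f$ by its Whitney averages $\tilde f_Q := \fint_{W_Q}|f|$ (which changes both functionals only by geometric constants), it suffices to prove the inequality with $A_\mD$ and $C_\mD$. The key estimate I would establish is
$$|\{A_\mD f > 2\lambda,\ C_\mD f \le \gamma\lambda\}| \le 2\gamma\,|\{A_\mD f > \lambda\}|,\qquad \gamma,\lambda > 0.$$
Decompose $\{A_\mD f > \lambda\}$ into its maximal dyadic cubes $\{Q_j\}$. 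For $x\in Q_j$ the contribution to $A_\mD f(x)$ from strict dyadic ancestors of $Q_j$ is independent of $x$, and by maximality bounded by $\lambda$ (evaluate $A_\mD f$ at a point of the dyadic parent of $Q_j$ which lies outside $\{A_\mD f > \lambda\}$). Hence on $Q_j$, the super-level set $\{A_\mD f > 2\lambda\}$ lies inside $\{A_\mD^{Q_j}f > \lambda\}$, where $A_\mD^{Q_j}f(x) := \sum_{R \subseteq Q_j,\, R\ni x}|f_R|\ell(R)$; and if $Q_j$ meets $\{C_\mD f \le \gamma\lambda\}$, the choice of $Q_j$ itself as the test cube in the definition of $C_\mD$ forces $\sum_{R \subseteq Q_j}|f_R||W_R| \le \gamma\lambda|Q_j|$. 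Integrating $A_\mD^{Q_j}f$ gives $\|A_\mD^{Q_j}f\|_1 = 2\sum_{R\subseteq Q_j}|f_R||W_R| \le 2\gamma\lambda|Q_j|$, so by Chebyshev $|\{A_\mD^{Q_j}f > \lambda\} \cap Q_j| \le 2\gamma|Q_j|$, and summing in $j$ yields the claim. The standard distribution-function rearrangement then produces
$$\|A_\mD f\|_p^p \le 2^{p+1}\gamma\,\|A_\mD f\|_p^p + 2^p \gamma^{-p}\,\|C_\mD f\|_p^p,$$
and a small enough choice of $\gamma$ absorbs the first term into the left side.

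The main technical obstacle I anticipate is justifying this absorption, which requires $\|A_\mD f\|_p < \infty$ a priori; this is routinely handled by replacing $f$ with its cut-off $f\chi_{\{N^{-1}<t<N,\,|y|<N\}}$, for which $A_\mD$ is automatically bounded, applying the estimate to the cut-off, and passing to the limit $N\to\infty$ by monotone convergence. The dyadic structure is essential in the good-$\lambda$ step, as the constancy of the ancestor contribution to $A_\mD f$ on each maximal cube has no clean analogue for the cone functional $Af$ itself.
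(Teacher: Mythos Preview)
Your argument for $\|Cf\|_p\lesssim\|Af\|_p$ via the pointwise bound $Cf\lesssim M(Af)$ is exactly the paper's approach.

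For $\|Af\|_p\lesssim\|Cf\|_p$ you take a genuinely different route. The paper argues by duality: writing $\|Af\|_p=\iint|f|H$ with $H(t,y)=t^{-n}\int_{|x-y|<\alpha t}h(x)\,dx$ for a suitable $\|h\|_{p'}=1$, it slices the level sets $\{H>\lambda\}$, covers each by Carleson boxes via a Whitney covering, and arrives at $\|Af\|_p\lesssim\int Cf\cdot Mh\,dx\lesssim\|Cf\|_p$. Your dyadic good-$\lambda$ argument is correct as written: the decomposition of $\{A_\mD f>\lambda\}$ into maximal dyadic cubes, the constancy of the ancestor contribution on each $Q_j$, the Chebyshev step, and the absorption (after truncation) all go through. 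The reduction to the dyadic setting via Whitney averaging and Proposition~\ref{prop:dyadicvsnondya} is also legitimate and not circular, since that proposition is established independently. One small point: Proposition~\ref{prop:dyadicvsnondya} for $C$ is stated only for $1<p\le\infty$, so your chain formally misses $p=1$; but since $\|Cf\|_1=\infty$ whenever $f\ne 0$, that endpoint is vacuous and can be dispatched separately.

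As for trade-offs: the paper's duality argument is self-contained, avoids the dyadic reduction and the truncation/absorption step, and treats all $1\le p<\infty$ uniformly; it is also the part the authors flag as containing ``some novelties''. Your good-$\lambda$ approach is more classical in spirit and makes transparent the localization principle behind the estimate, at the cost of importing Proposition~\ref{prop:dyadicvsnondya} and the a priori finiteness manoeuvre.
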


At the endpoint $p=\infty$, the $A$-functional depends on the choice of aperture,
and should be replaced by the Carleson functional,
which is strictly smaller, as seen from the example
$$
  f(t,x)= (t+|x|)^{-n}.
$$
At the endpoint $p=1$, we have $\|Cf\|_1<\infty$ only if $f=0$, so the Carleson functional should be replaced by the area functional, which in this case defines simply the function space
$L_1(\R^{1+n}_+)$.

Proposition~\ref{prop:AGequiv} is a reformulation of Coifman, Meyer and Stein~\cite[Thm. 3]{CMS}.
The proof below contains some novelties in the estimate $A\lesssim C$, by using a duality argument rather than a good lambda estimate.

\begin{proof}[Proof of Proposition~\ref{prop:AGequiv}]
  For $C\lesssim A$ we have
\begin{multline*}
  M(Af)(x)=\sup_{Q\ni x} \fint_Q \left( \iint_{|y-x|<\alpha t} |f(t,y)| t^{-n}dtdy\right) dx \\
  = \sup_{Q\ni x} |Q|^{-1}\iint \left( t^{-n}\int_{|x-y|<\alpha t, x\in Q} dx \right) |f(t,y)| dtdy\gtrsim Cf(x).
\end{multline*}

For $A\lesssim C$, we argue by duality with a suitable $\|h\|_{p'}=1$:
\begin{equation*}
\begin{split}
  \|Af\|_{p}
  &=\int_{\R^n}\Big(\iint_{|y-x|<\alpha t}|f(t,y)|t^{-n}dt dy\Big)h(x)dx \\
  &=\iint_{\R_+^{1+n}}|f(t,y)|\Big(t^{-n}\int_{|x-y|<\alpha t}h(x)dx\Big)dt dy \\
  &=:\iint_{\R_+^{1+n}}|f(t,y)| H(t,y)dt dy
    =\int_0^\infty\Big(\iint_{\substack{(t,y)\in \R_+^{1+n} \\ H(t,y)>\lambda}} |f(t,y)| dt dy\Big) d\lambda,
\end{split}
\end{equation*}
where $H(t,y)$ is defined by the penultimate equality to be the parenthetical quantity on the line above.
If $H(t,y)>\lambda$, there is a cube $Q$ such that $\fint_Q h(x)dx>c\lambda$ and $(t,y)\in\widehat Q$. By the Whitney covering lemma, there is a collection $\mathcal{Q}_\lambda$ of these cubes such that the $\widehat Q$ are pairwise disjoint, and the $5\widehat Q$, $Q\in\mathcal{Q}_\lambda$, cover all the points $(t,y)$ with $H(t,y)>\lambda$. Thus
\begin{equation*}
\begin{split}
   \iint_{\substack{(t,y)\in \R_+^{1+n} \\ H(t,y)>\lambda}} |f(t,y)| dt dy
   &\leq\sum_{Q\in\mathcal{Q}_\lambda}\iint_{5\widehat Q}|f(t,y)|dt dy
   \leq\sum_{Q\in\mathcal{Q}_\lambda}|5 Q|\inf_{x\in Q} Cf(x) \\
   &\lesssim \sum_{Q\in\mathcal{Q}_\lambda}\int_{Q} Cf(x) dx
     \leq\int_{\{Mh>c\lambda\}}Cf(x) dx.
\end{split}
\end{equation*}
Substituting back, this shows that
\begin{equation*}
\begin{split}
  \|Af\|_{p}
  \lesssim\int_0^\infty \int_{\{Mh>c\lambda\}}Cf(x) dx d\lambda
  \lesssim\int_{\R^n} Cf(x)Mh(x)dx
  \lesssim \|Cf\|_{p}.\qedhere
\end{split}
\end{equation*}
\end{proof}

\section{A dyadic weighted stopped square function estimate}   \label{sec:stoppedsqfcnmart}

In this section we prove an auxiliary weighted norm inequality, which will be used in the construction of the extensions in the subsequent section.

Let $\omega_*\subset \mD$ be any collection of dyadic cubes.
Given any $Q\in\mD$, define its stopping parent $Q_*$ to be the minimal $Q_*\in \omega_*$ such that $Q_*\supsetneqq Q$.
If no such $Q_*$ exists, we let $Q_*:= Q$.
Define the stopped square function
$$
  S_{\omega_*} u(x):= \left( \sum_{Q\in \omega_*} |u_Q-u_{Q_*}|^2 1_Q(x)  \right)^{1/2}, \qquad x\in\R^n.
$$

\begin{lem}   \label{lem:nonweightstopSF}
  The stopped square function $S_{\omega_*}$ defined above, has estimates
  \begin{align*}
    | \{S_{\omega_*}u>\lambda\}| & \lesssim \lambda^{-1} \|u\|_{L_1(\R^n)},\qquad \lambda>0, \\
    \| S_{\omega_*}u\|_{L_2(\R^n)} & \lesssim \|u\|_{L_2(\R^n)},
  \end{align*}
  uniformly for any collection of dyadic cubes  $\omega_*$.
\end{lem}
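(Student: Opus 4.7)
The plan is to first establish the $L^2$ estimate and then deduce the weak $L^1$ bound from it via a standard Calder\'on--Zygmund decomposition, exploiting the cancellation of the bad atoms.

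For the $L^2$ estimate, I would identify $S_{\omega_*} u$ as a pointwise minorant of the square function of an $L^2$-bounded martingale. Concretely, I will define an increasing sequence of partitions $\mathcal{P}_0, \mathcal{P}_1, \ldots$ of $\R^n$, each refining the previous, as follows: $\mathcal{P}_0$ consists of the maximal cubes of $\omega_*$ together with the complement of their union; and $\mathcal{P}_{k+1}$ is obtained from $\mathcal{P}_k$ by partitioning each cell $P$ that belongs to $\omega_*$ into its $\omega_*$-children $\{Q\in\omega_*:Q_*=P\}$ plus the ``gap'' $P\setminus\bigcup\{Q:Q_*=P\}$, while leaving all other cells unrefined. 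Writing $u_k:=E[u\mid \mathcal{F}_k]$ for the conditional expectations with respect to the $\sigma$-algebra $\mathcal{F}_k$ generated by $\mathcal{P}_k$, tracing the cell containing a given point $y$ through the refinement process shows that the martingale jumps $u_{k+1}(y)-u_k(y)$ reproduce the terms $u_Q-u_{Q_*}$ as $Q$ runs through the $\omega_*$-cubes in the chain of $y$, plus at most one extra ``gap'' term. This gives the pointwise comparison
$$
S_{\omega_*}u(y)^2\leq \sum_k(u_{k+1}(y)-u_k(y))^2,
$$
and Pythagorean orthogonality of martingale differences together with $L^2$-contractivity of conditional expectation then yields $\|S_{\omega_*}u\|_2^2\leq \|u_\infty\|_2^2-\|u_0\|_2^2\leq \|u\|_2^2$.

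For the weak $L^1$ estimate, I would use a dyadic Calder\'on--Zygmund decomposition $u=g+b$ at level $\lambda$, with $\|g\|_\infty\lesssim\lambda$, $\|g\|_1\leq\|u\|_1$, and $b=\sum_j b_j$ where each $b_j$ is supported in a disjoint dyadic cube $Q_j$ with $\int b_j=0$ and $\sum_j|Q_j|\lesssim\lambda^{-1}\|u\|_1$. Chebyshev and the $L^2$ bound handle the good part: $|\{S_{\omega_*}g>\lambda/2\}|\lesssim\lambda^{-2}\|g\|_2^2\lesssim\lambda^{-1}\|g\|_1\leq\lambda^{-1}\|u\|_1$. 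For the bad part, the decisive observation is that $S_{\omega_*}b(x)=0$ for every $x\notin\bigcup_j Q_j$: indeed, for any $Q\in\omega_*$ containing such $x$, dyadic nesting forces either $Q\cap Q_j=\emptyset$ or $Q\supseteq Q_j$ (the case $Q\subseteq Q_j$ is excluded by $x\notin Q_j$), and in the second case $\int_Q b_j=\int b_j=0$; hence $b_Q=0$, and similarly $b_{Q_*}=0$. Consequently $\{S_{\omega_*}b>0\}\subseteq\bigcup_j Q_j$, whose measure is already $\lesssim\lambda^{-1}\|u\|_1$.

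The main technical point is the measure-theoretic setup of the auxiliary martingale, particularly the correct handling of points that lie in infinitely many $\omega_*$-cubes (where the $\omega_*$-chain is an infinite nested sequence) and of the ``gap'' cells that are never further refined; both are absorbed by the standard martingale convergence theorem, and the pointwise comparison $S_{\omega_*}u\leq S^{\mathrm{mart}}u$ only needs to hold almost everywhere, so no depth assumption on $\omega_*$ is required.
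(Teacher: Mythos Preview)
Your proposal is correct and follows essentially the same strategy as the paper: an $L_2$ bound via orthogonality of martingale differences for a suitable filtration built from $\omega_*$, followed by a dyadic Calder\'on--Zygmund decomposition for the weak $L_1$ bound, using that $S_{\omega_*}b_j$ is supported in $Q_j$. The only cosmetic difference is that the paper organizes $\omega_*$ into generations $\omega_k$ indexed by $k\in\Z$ and defines $u_k$ directly, whereas you build the filtration by successive refinement starting from the maximal $\omega_*$-cubes; your treatment of the ``gap'' cells and of points lying in infinitely many $\omega_*$-cubes is in fact slightly more explicit than the paper's.
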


   A standard Calder\'on--Zygmund decomposition argument yields the weak $L_1$ estimate, given the $L_2$ estimate.
   This $L_2$ estimate is in turn proved by a well known martingale square functions estimate,
see for example Garnett~\cite[Ch. VIII, Lem. 6.4]{Gar}.
For completeness, we include the details of the proof.

\begin{proof}
(a)
   For the $L_2$ estimate, we write $\omega_*=\bigcup_{k=-\infty}^\infty \omega_k$,
   where the cubes in $\omega_k$ are disjoint and $\omega_{k-1}= \sett{Q_*}{Q\in\omega_k}$.
   We define the martingale $\{u_k\}_{k=-\infty}^\infty$, where
   $$
     u_k(x):=
     \begin{cases}
       \fint_Q u(y) dy, & \qquad x\in Q\in \omega_k, \\
       u(x), & \qquad x\notin\bigcup_{Q\in \omega_k}Q.
     \end{cases}
   $$
   This yields
   \begin{multline*}
      \|S_{\omega_*}u\|_2^2=\sum_k \sum_{Q\in \omega_k} |u_Q-u_{Q_*}|^2 |R|
      \le \sum_k \int_{\R^n}|u_{k+1}-u_k|^2 dx \\
      =\sum_k\int_{\R^n} (u_{k+1}^2+u_k^2-2u_{k+1}u_k)dx
      =\sum_k\int_{\R^n} (u_{k+1}^2-u_k^2)dx\le \int_{\R^n} u^2 dx,
   \end{multline*}
   where we have used that $\int u_ku_{k+1} dx= \int u_k^2 dx$.

(b)
Let $Q_k$ denote the maximal dyadic cubes contained in $\{M_\mD u>\lambda\}$.
Write $u=g+\sum_k b_k$, where $|g|\le \lambda$ and $\supp b_k\subset Q_k$ with $\int_{Q_k}b_k=0$.
The stated estimate follows from the two estimates
$$
  |\{S_{\omega_*}g>\lambda/2\}|\lesssim \lambda^{-2}\int |S_{\omega_*}g|^2 dx\lesssim
  \lambda^{-2}\int |g|^2 dx\lesssim \lambda^{-1}\int |g| dx\le \lambda^{-1}\int |u| dx,
$$
using (a) and that $\int_{Q_k}|u|dx\approx\lambda|Q_k|$, and
$$
  |\{S_{\omega_*}(\sum_k b_k)>\lambda/2\}|\lesssim\sum_k|Q_k|=|\{M_\mD u>\lambda\}|\lesssim  \lambda^{-1}\int |u| dx,
$$
using that $\supp S_{\omega_*} b_k\subset Q_k$ and the weak $L_1$ bound of the Hardy--Littlewood maximal function.
\end{proof}

The main result of this section is the following weighted estimate for $S_{\omega_*}$, inspired by
the work of Gundy and Wheeden~\cite[Thm. 2]{GW} for the non-stopped square function (i.e., case $\omega_*=\mathcal{D}$).

\begin{prop}  \label{prop:stoppedweightedSF}
  Fix a Muckenhoupt weight $w\in A_\infty(dx)$ and an exponent  $1\le p<\infty$.
  Then we have
  the stopped square function estimate
  $$
    \| S_{\omega_*} u\|_{L_p(\R^n;w)}\lesssim \|M_\mD u\|_{L_p(\R^n;w)},
  $$
  uniformly for any collection of dyadic cubes  $\omega_*$.
\end{prop}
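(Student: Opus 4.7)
The plan is to adapt the good-$\lambda$ approach of Gundy--Wheeden, whose classical argument handles the case $\omega_* = \mathcal{D}$ of this proposition. The main reduction is to the measure-theoretic good-$\lambda$ estimate
\[
|\{S_{\omega_*} u > 2\beta,\ M_{\mathcal{D}} u \le \gamma\beta\}| \lesssim \gamma^2\,|\{S_{\omega_*} u > \beta\}|,\qquad \beta>0,\ \gamma\in(0,1).
\]
Once this is in hand, the $A_\infty(dx)$ property of $w$ supplies a $\delta=\delta(w)>0$ so that the same inequality holds with Lebesgue measure replaced by $w$ and $\gamma^2$ by $\gamma^{2\delta}$; integrating against the distribution function of $S_{\omega_*} u$ and choosing $\gamma$ small enough to absorb $\|S_{\omega_*} u\|_{L^p(w)}$ on the right (after an \emph{a priori} truncation for finiteness) yields the $L^p(w)$ bound.

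To prove the good-$\lambda$ inequality I would decompose $\{S_{\omega_*} u > \beta\}$ into its maximal dyadic cubes $\{P_k\}$ and argue on each $P_k$ separately. There the square function splits as
\[
S_{\omega_*} u(x)^2 = A_k^2 + \sum_{Q\in\omega_*,\,Q\subseteq P_k,\,Q\ni x}|u_Q - u_{Q_*}|^2,
\]
where $A_k^2 \le \beta^2$ is constant on $P_k$ by maximality, so the event $S_{\omega_*} u > 2\beta$ forces the inner sum above $3\beta^2$. When $P_k\notin\omega_*$ the inner sum is not fully local, since the stopping parent $Q_*$ of a maximal $\omega_*$-cube in $P_k$ lives above $P_k$; I would remedy this by augmenting to $\omega_*^{(k)} := \omega_* \cup \{P_k\}$. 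Since any $x\in P_k$ belongs to at most one such boundary cube, the triangle inequality yields the pointwise bound
\[
\sum_{Q\in\omega_*,\,Q\subseteq P_k,\,Q\ni x}|u_Q - u_{Q_*}|^2 \le 2\,S^{(k)} u(x)^2 + 2|u_{P_k} - u_{R^*}|^2,
\]
where $R^*$ is the minimal $\omega_*$-ancestor of $P_k$ (if any) and $S^{(k)} u$ is the local stopped square function associated to $\omega_*^{(k)}$.

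The key $L^2$ estimate $\int_{P_k} S^{(k)} u(x)^2\,dx \le \int_{P_k}(u - u_{P_k})^2\,dx$ comes from martingale orthogonality. I would construct the stopped martingale $\{v_j^{(k)}\}$ on $(P_k, dx/|P_k|)$ with $v_0^{(k)}\equiv u_{P_k}$, advancing through the elements of $\omega_*^{(k)}$ containing $x$ as they shrink, and terminating at $v_\infty^{(k)}(x) = u(x)$. The martingale identity $E[v_j^{(k)}|\mathcal{G}_{j-1}^{(k)}]=v_{j-1}^{(k)}$ is verified by partitioning any atom into its $\omega_*^{(k)}$-children plus the uncovered remainder (on which $v_j^{(k)}=u$), whence $\int u$ telescopes to the correct averages. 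Standard $L^2$-orthogonality of martingale differences then gives $\int_{P_k} S^{(k)}(v)(x)^2\,dx = \int_{P_k}(u-u_{P_k})^2\,dx$, with $S^{(k)}(v)^2 \ge S^{(k)} u(x)^2$ pointwise.

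To exploit the restriction $M_{\mathcal{D}} u \le \gamma\beta$ I would pass to the stopped dyadic process $u^\tau$ with $\tau(x) := \inf\{k : |u_{Q^k(x)}|>\gamma\beta\}$: then $\|u^\tau\|_\infty \le \gamma\beta$, $u^\tau = u$ on $\{M_{\mathcal{D}} u \le \gamma\beta\}$, and the classical identity $S(u) = S(u^\tau)$ on $\{\tau = \infty\}$ transfers to the assertion that the local square function of $u^\tau$ agrees with that of $u$ on the good set. The $L^2$ estimate applied to $u^\tau$ then yields $\int_{P_k} S^{(k)}(u^\tau)^2\,dx \lesssim \gamma^2\beta^2 |P_k|$, and combined with the trivial $|u_{P_k} - u_{R^*}|\le 2\gamma\beta$ on the good set, Chebyshev's inequality delivers $|\{S_{\omega_*} u > 2\beta,\, M_{\mathcal{D}} u \le \gamma\beta\}\cap P_k|\lesssim \gamma^2|P_k|$; summation over $k$ closes the loop. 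The main obstacle I anticipate is matching the dyadic stopping time $\tau$ with the $\omega_*$-tree structure in this final step, i.e.\ the bookkeeping needed to confirm $S^{(k)}(u^\tau)(x) = S^{(k)}(u)(x)$ on the good set in the sense required by Chebyshev; modulo this standard stopping-time analysis the argument transcribes the classical Gundy--Wheeden scheme.
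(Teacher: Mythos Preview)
Your overall strategy---reducing to a Lebesgue good-$\lambda$ inequality on the maximal dyadic cubes $P_k\subset\{S_{\omega_*}u>\beta\}$, then passing to the weighted inequality via $A_\infty$---matches the paper exactly. The $L_2$ martingale estimate $\int_{P_k}(S^{(k)}u)^2\le\int_{P_k}(u-u_{P_k})^2$ is also correct and is essentially the paper's Lemma~4.2(a).

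The gap is in your truncation step. With your stopping rule $\tau(x)=\inf\{k:|u_{Q^k(x)}|>\gamma\beta\}$, the stopped limit $u^\tau$ takes the value $u_{Q^{\tau(x)}(x)}$ on $\{\tau<\infty\}$, and this is the \emph{first bad} average, which by definition exceeds $\gamma\beta$ and has no upper bound: the dyadic averages $u_{Q^k}=\fint_{Q^k}u$ have no doubling property, so the jump from $|u_{Q^{\tau-1}}|\le\gamma\beta$ to $|u_{Q^{\tau}}|$ can be arbitrarily large. Hence $\|u^\tau\|_\infty\le\gamma\beta$ is false. Stopping one step earlier would give the $L^\infty$ bound but would destroy the mean-zero identity $\fint_Q u^\tau=\fint_Q u$ that you need for $S^{(k)}(u^\tau)=S^{(k)}(u)$ on the good set. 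The fix is to stop instead on $\fint_{Q^k}|u|>\gamma\beta$ (the Calder\'on--Zygmund rule): then the maximal stopping cubes $R$ satisfy $|u_R|\le\fint_R|u|\le 2^n\gamma\beta$ by comparison with the parent, the mean-zero cancellation still holds, and on the complement $|u|\le M_\mD u\le\gamma\beta$ a.e., so $\|u^\tau\|_\infty\lesssim\gamma\beta$ as you want.

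The paper sidesteps this altogether by a different localization: rather than truncating $u$, it replaces $u$ by $u1_Q$ and observes that the local part of $S_{\omega_*}u$ on $Q$ is dominated by $S_{\omega_*}(u1_Q)$, whence $\{S_{\omega_*}u>2\lambda,\,M_\mD u<\gamma\lambda\}\cap Q\subset\{S_{\omega_*}(u1_Q)>\lambda\}$. It then invokes a separately proved weak-type $(1,1)$ bound for $S_{\omega_*}$ (established by Calder\'on--Zygmund decomposition plus the $L_2$ estimate), yielding $|\{S_{\omega_*}(u1_Q)>\lambda\}|\lesssim\lambda^{-1}\int_Q|u|\le\gamma|Q|$. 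This gives $\gamma$ rather than your $\gamma^2$, but either suffices. The paper's route is cleaner precisely because it packages the stopping-time bookkeeping once and for all into the weak $(1,1)$ lemma, rather than intertwining it with the $\omega_*$-tree as you do.
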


\begin{proof}
   It suffices to prove a good lambda inequality
   $$
     w(\{ S_{\omega_*}u>2\lambda, M_\mD u<\gamma\lambda \})\lesssim \gamma^\delta w(\{ S_{\omega_*}u>\lambda \}),
   $$
   for some $\delta>0$. By the $A_\infty$ assumption, this will follow from
   a Lebesgue measure estimate
   $$
     | \{ S_{\omega_*}u>2\lambda, M_\mD u<\gamma\lambda \} \cap Q |\lesssim \gamma |Q|,
   $$
   for any maximal dyadic cube $Q\subset \{ S_{\omega_*}u>\lambda \}$.
   To this end, assume that $x\in\{ S_{\omega_*}u>2\lambda, M_\mD u<\gamma\lambda \} \cap Q$.
   Then
\begin{multline*}
  4\lambda^2 <\sum_{R\in \omega_*, R_*\subset Q} |u_R-u_{R_*}|^2 1_R(x)
  + \sum_{R\in \omega_*, R\subset Q\subsetneqq R_*} |u_R-u_{R_*}|^2 1_R(x)  \\
  + \sum_{R\in \omega_*, R\supsetneqq Q} |u_R-u_{R_*}|^2 1_R(x)\le
  S_{\omega_*}(u1_Q)(x)+ 4(\gamma\lambda)^2+ \lambda^2,
\end{multline*}
using that $M_\mD u(x)<\gamma\lambda$ for the second term and the maximality of $Q$ for the last term.
Therefore, assuming $\gamma<1/2$, we have $S_{\omega_*}(u1_Q)(x)> \lambda$, so
$$
   \{ S_{\omega_*}u>2\lambda, M_\mD u<\gamma\lambda \} \cap Q\subset \{S_{\omega_*}(u1_Q)>\lambda\}.
$$
From Lemma~\ref{lem:nonweightstopSF}, we get the estimate
$$
  | \{S_{\omega_*}(u1_Q)>\lambda\} |\lesssim \lambda^{-1} \int_Q |u| dx.
$$
We may assume that $\{ S_{\omega_*}u>2\lambda, M_\mD u<\gamma\lambda \} \cap Q\ne \emptyset$, and in particular
that $ \int_Q |u| dx\le \gamma\lambda |Q|$.
Put together, this proves that $|\{ S_{\omega_*}u>2\lambda, M_\mD u<\gamma\lambda \} \cap Q|\lesssim \gamma |Q|$.
\end{proof}

\section{Construction of extensions}   \label{sec:ext}

In this section we prove Theorems~\ref{thm:1} and \ref{thm:2}, assuming a square function estimate which
we prove in Section~\ref{sec:stoppedsqfcnmart}.
We first prove Theorems~\ref{thm:1}, where we use Theorem~\ref{thm:2} in the construction of extensions.

\begin{proof}[Proof of Theorem~\ref{thm:1}, part I: existence and bound of the trace of $u\in \V_p^0$]
  We fix $x\in \R^n$ and consider two Whitney regions $W(t_1,x)$ and $W(t_2,x)$, with $t_1<t_2$.
  Estimate
\begin{multline}  \label{eq:whitneydifference}
  \left||W(t_2,x)|^{-1} \iint_{W(t_2,x)} u(s,y) dsdy- |W(t_1,x)|^{-1} \iint_{W(t_1,x)} u(s,y) dsdy\right| \\
  \approx \left| \iint_{W(1,0)} (u(t_2s, x+t_2y)-u(t_1s, x+t_1 y)) dsdy\right| \\
   \lesssim \iint_{W(1,0)}\int_{t_1}^{t_2} |\nabla u(ts, x+ty)| dt dsdy \\
 = \int_{t_1}^{t_2} \int_{t/c_0}^{c_0t}  \int_{|y'-x|<c_1 t} |\nabla u(s', y')| t^{-1-n} dy'ds'dt \\
 \lesssim  \int_{t_1/c_0}^{c_0t_2}  \int_{|y'-x|<c_1 s'} |\nabla u(s', y')| (s')^{-n} dy'ds'
\end{multline}

  Since $A(\nabla u)\in L_p(\R^n)$ by Proposition~\ref{prop:AGequiv}, we have for almost all $x\in\R^n$ that
  $A(\nabla u)(x)<\infty$. For such $x$, it follows from the above estimate that Whitney averages
  converges as $t\to 0$. Thus, in this sense we have a well defined trace almost everywhere on $\R^n$.
  
 On the other hand, directly from the definition of the space $\V_p^0$ it follows
 that $|W(t_2,x)|^{-1} \iint_{W(t_2,x)} u(s,y) dsdy\to 0$ as $t_2\to \infty$.
  The estimate
$$
  \|\gamma u\|_p\lesssim \|A(\nabla u)\|_p\approx \|C(\nabla u)\|_p
$$
  follows.
Indeed, if the right hand side is finite, then at almost every $x\in\R^n$, we see from \eqref{eq:whitneydifference}
that the trace $\gamma u(x)$ exists in the sense of convergence of Whitney averages, since the right hand side 
in \eqref{eq:whitneydifference} has limit zero as $t_2\to 0$.
Then letting $t_2\to\infty$ shows the pointwise estimate $|\gamma u(x)|\lesssim |A(\nabla u)(x)|$. 

This completes the first part of the proof of Theorem \ref{thm:1}.
\end{proof}

\begin{proof}[Proof of Theorem~\ref{thm:1}, part II: construction of the extension assuming Theorem \ref{thm:2}]

  We construct the extension $u$ of $g\in L_p(\R^n)$ as follows.
  Define functions $g_k$, $u_k$ and $f_k$, $k=0,1,2,\ldots$ inductively:
  Let $g_0:= g$. Given $g_k\in L_p(\R^n)$, $k\ge 0$, we apply Theorem~\ref{thm:2} to define
  the dyadic extension $u_k$ and its approximation $f_k$, with estimates
\begin{gather*}
  \|N(f_k-u_k)\|_p\le \epsilon \|g_k\|_p, \\
  \|C(\nabla f_k)\|_p\le C\epsilon^{-1}\|g_k\|_p.
\end{gather*}
Then let $g_{k+1}:= g_k-f_k|_{\R^n}$.
We have
$$
  \|g_{k+1}\|_p\le \|N(u_k-f_k)\|_p\le \epsilon \|g_k\|_p
$$
and therefore $\|g_k\|_p\le \epsilon^k \|g\|_p$.
Define
$$
  f:= \sum_{k=0}^\infty f_k.
$$
This is an exact extension of $g$ since
$0 = \lim_{k\to \infty}\|g_{k+1}\|_p= \Big\| g-\sum_{j=0}^k f_j|_{\R^n} \Big\|_p$.
Moreover, we have the estimate
$$
  \|C(\nabla f)\|_p
  \leq \sum_{k=0}^\infty\|C(\nabla f_k)\|_p
  \leq\sum_{k=0}^\infty C\epsilon^{-1}\|g_k\|_p 
  \leq \sum_{k=0}^\infty C\epsilon^{-1} \epsilon^k \|g\|_p\lesssim \|g\|_p,
$$
fixing some $0<\epsilon <1$, and  similarly
\begin{equation*}
  \|N f\|_p\leq\sum_{k=0}^\infty\|Nf_k\|_p
  \leq\sum_{k=0}^\infty\big(\|Nu_k\|_p+\epsilon\|g_k\|_p\big)
  \lesssim\sum_{k=0}^\infty\|g_k\|_p\lesssim\|g\|_p.
\end{equation*}
This shows that we have an extension $f\in\V_p^N$.

It remains to mollify $f$ to obtain another extension
$$
   u(t,x):= \iint_{\R^{1+n}_+} f(ts,x+ty) \eta(s,y) dsdy, \qquad (t,x)\in\R^{1+n}_+,
$$
where $\eta\in C_0^\infty(W(1,0))$ has $\iint \eta =1$.
Then it is straightforward to verify that $u\in \tilde{\V}_p^N$, with the stated estimate
of
\begin{equation*}
  \nabla u(t,x)= \iint_{\R^{1+n}_+} \begin{bmatrix} s & y^t\\ 0 & I \\\end{bmatrix}\nabla f(ts, x+ty) \eta(s,y)dsdy.\qedhere
\end{equation*}
\end{proof}

For the proof of Theorem~\ref{thm:2}, we require the following lemma
for the truncated dyadic maximal function from \eqref{eq:truncmax}.

\begin{lem}   \label{lem:truncmax}
  For any $g\in L_1^\loc(\R^n)$ and $Q\in \mD$, we have
$$
  \frac{|Q|}{M_\mD g(Q)}\le 4\int_{Q} \frac {dx}{M_\mD g(x)}.
$$
\end{lem}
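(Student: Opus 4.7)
The inequality goes against the naive pointwise comparison: for every $x\in Q$, any dyadic $R\supseteq Q$ is admissible in the sup defining $M_\mD g(x)$, so $M_\mD g(x)\ge M_\mD g(Q)$, which yields $\int_Q dx/M_\mD g(x)\le |Q|/M_\mD g(Q)$ pointwise. The content of the lemma is thus a reverse bound up to a constant, a classical Calder\'on--Zygmund stopping-time phenomenon: $M_\mD g$ cannot exceed $M_\mD g(Q)$ by a large factor on most of $Q$.

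The plan is as follows. Set $\lambda:=M_\mD g(Q)$ (if $\lambda=\infty$ there is nothing to prove), and consider the bad set $E:=\{x\in Q:M_\mD g(x)>2\lambda\}$. For any $x\in E$, the dyadic cube $R\ni x$ witnessing $\fint_R|g|>2\lambda$ must satisfy $R\subsetneq Q$: if instead $R\supseteq Q$, then $R$ is one of the competitors in the supremum defining $M_\mD g(Q)$, forcing $\fint_R|g|\le\lambda$, a contradiction. Hence $E$ is covered by the disjoint family $\{R_j\}$ of maximal dyadic subcubes of $Q$ on which $\fint_{R_j}|g|>2\lambda$. Since $R=Q$ itself is admissible in the definition of $M_\mD g(Q)$, we have $\fint_Q|g|\le\lambda$, and therefore
\begin{equation*}
 2\lambda|E|\le 2\lambda\sum_j|R_j|<\sum_j\int_{R_j}|g|\le\int_Q|g|\le\lambda|Q|,
\end{equation*}
so $|E|\le|Q|/2$.

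To finish, on $Q\setminus E$ one has $M_\mD g(x)\le 2\lambda$, and hence
\begin{equation*}
  \int_Q\frac{dx}{M_\mD g(x)}\ge\frac{|Q\setminus E|}{2\lambda}\ge\frac{|Q|}{4\lambda}=\frac{|Q|}{4M_\mD g(Q)}.
\end{equation*}
The constant $4$ is precisely what the choice of threshold $2\lambda$ yields; more generally the stopping threshold $c\lambda$ produces the constant $c^2/(c-1)$, minimized at $c=2$. The only delicate point in the whole argument is the claim that the extremizing cube $R$ for a point $x\in E$ lies strictly inside $Q$, which is the only place that uses the precise truncated nature of $M_\mD g(Q)$ rather than just a pointwise value of $M_\mD g$ on $Q$.
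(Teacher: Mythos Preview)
Your proof is correct and essentially identical to the paper's: both define the bad set $E=\{x\in Q:M_\mD g(x)>2M_\mD g(Q)\}$, bound $|E|\le|Q|/2$ via the weak $L_1$ estimate (the paper phrases this as $E=\{x\in Q:M_\mD(g1_Q)(x)>2M_\mD g(Q)\}$, which is exactly your observation that the witnessing cube must lie strictly inside $Q$), and then integrate $1/M_\mD g$ over $Q\setminus E$. Your extra remark on the optimality of the threshold is a pleasant bonus not in the paper.
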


\begin{proof}
Define
$$
  E_Q:= \sett{x\in Q}{M_\mD g(x)>2 M_\mD g(Q)}= \sett{x\in Q}{M_\mD (g1_Q)(x)>2 M_\mD g(Q)}.
$$
The weak $L_1$ estimate for $M_\mD$ yields
$$
  |E_Q|\le \frac 1{2M_\mD g(Q)}\int_Q |g| dx\le \frac 12|Q|.
$$
Thus
\begin{equation*}
  \frac{|Q|}{M_\mD g(Q)}\le 2\frac{|Q\setminus E_Q|}{M_\mD g(Q)}\le 4\int_{Q\setminus E_Q} \frac {dx}{M_\mD g(x)}\le 4\int_{Q} \frac {dx}{M_\mD g(x)}.\qedhere
\end{equation*}
\end{proof}

The following lemma shows that the horizontal derivatives are essentially controlled by the vertical ones, reducing the crux of the matter to controlling the latter.

\begin{lem}  \label{lem:tanggradest}
Let $u$ be a function in $\R^{1+n}_+$ which is constant on dyadic Whitney regions,
and let $Q\in \mD$.
Uniformly for such $u$ and $Q$, we have the estimate
$$
  \iint_{\clos{\widehat Q}}|\nabla_x u| dtdx \lesssim  \iint_{\clos {\widehat Q}}|\pd_t u| dtdx+
  |Q|\sum_{Q'} |u_{Q'}|,
$$
where the last sum is over $Q'\in\mD$ with $\ell(Q')=\ell(Q)$, $\partial Q'\cap \partial Q\ne\emptyset$.
\end{lem}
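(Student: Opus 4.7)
The function $u$, being constant on each dyadic Whitney region $W_R=(\ell(R)/2,\ell(R))\times R$, has a distributional gradient that is a vector measure concentrated on the $n$-dimensional faces separating adjacent Whitney boxes. The horizontal faces at heights $t=\ell(R)$, between $W_R$ and the Whitney box $W_{\hat R}$ of the dyadic parent $\hat R$, carry the mass of $\partial_t u$: a jump of $|u_R-u_{\hat R}|$ weighted by the face area $|R|$. The vertical faces between same-level face-adjacent Whitney boxes $W_{R_1}, W_{R_2}$ carry the mass of $\nabla_x u$, with a jump $|u_{R_1}-u_{R_2}|$ weighted by $|R_1|/2$ (the height of the box times the area of the $(n-1)$-dimensional face). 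The plan is to dominate each vertical jump inside $\widehat Q$ by a telescoping chain of horizontal jumps through common ancestors inside $\widehat Q$, and to absorb the vertical faces on $\partial\widehat Q$ at the coarsest level using the $|Q|\sum_{Q'}|u_{Q'}|$ term.

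Write $k_0$ for the level of $Q$ (so $\ell(Q)=2^{-k_0}$), set $L_k:=\{R\in\mD_k:R\subseteq Q\}$, and introduce the level-wise quantities
$$
T_k:=\sum_{\substack{R_1,R_2\in L_k \\ \text{face-adjacent}}}|u_{R_1}-u_{R_2}|\,|R_1|,\qquad
V_k:=\sum_{R\in L_k}|u_R-u_{\hat R}|\,|R|,
$$
so that the interior contributions are $\iint_{\widehat Q}|\nabla_x u|=\tfrac12\sum_{k>k_0}T_k$ and $\iint_{\widehat Q}|\partial_t u|=\sum_{k>k_0}V_k$. The heart of the argument is a recursion of the form
$$
T_k \le c_n\bigl(V_k+\tfrac12 T_{k-1}\bigr).
$$
For a sibling pair (common parent $\hat R\in L_{k-1}\cup\{Q\}$), the split $|u_{R_1}-u_{R_2}|\le|u_{R_1}-u_{\hat R}|+|u_{R_2}-u_{\hat R}|$ produces only $V_k$ contributions, and since each $R\in L_k$ has exactly $n$ sibling partners, these sum to at most $2n\,V_k$. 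For a non-sibling pair at level $k$ (parents $\hat R_1\neq\hat R_2$ face-adjacent in $L_{k-1}$), the three-term telescope $|u_{R_1}-u_{R_2}|\le|u_{R_1}-u_{\hat R_1}|+|u_{\hat R_1}-u_{\hat R_2}|+|u_{R_2}-u_{\hat R_2}|$ again feeds the endpoint terms into $2n\,V_k$, while the middle term contributes exactly $\tfrac12 T_{k-1}$, because each level-$(k-1)$ face-adjacent pair has $2^{n-1}$ level-$k$ non-sibling descendants and $2^{n-1}|R_1|=|\hat R_1|/2$.

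Since $L_{k_0}=\{Q\}$ contains no face-adjacent pair, the base case $T_{k_0}=0$ is immediate. Iterating the recursion gives $T_k\lesssim\sum_{j=k_0+1}^{k}2^{-(k-j)}V_j$, and the resulting geometric series sums to $\sum_k T_k\lesssim\sum_k V_k$, yielding the interior bound $\iint_{\widehat Q}|\nabla_x u|\lesssim\iint_{\widehat Q}|\partial_t u|$. The term $|Q|\sum_{Q'}|u_{Q'}|$ finally absorbs the mass of $|\nabla_x u|$ on the level-$k_0$ vertical portions of $\partial\widehat Q$: each such face between $W_Q$ and a same-sized neighbor $W_{Q'}$ contributes $|u_Q-u_{Q'}|\,|Q|/2$, which the triangle inequality bounds by $\lesssim|Q|\sum_{Q'}|u_{Q'}|$ (with $Q$ itself included in the index set). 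The main technical obstacle is the combinatorial bookkeeping at the recursion step: verifying that each level-$(k-1)$ face-adjacent pair has exactly $2^{n-1}$ level-$k$ non-sibling descendants, so that the middle contribution matches $\tfrac12 T_{k-1}$ precisely, and controlling the multiplicity with which each $V_k$ jump is reused in the telescopes. Once this bookkeeping is in hand, the geometric decay $2^{-(k-j)}$ makes the final summation routine.
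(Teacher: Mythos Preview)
Your level-by-level recursion is essentially the paper's telescoping-to-common-ancestor argument, reorganized: the factor $\tfrac12=2^{n-1}\cdot 2^{-n}$ in the middle term is exactly the per-level contribution to the paper's geometric series $\sum_k 2^{(n-1)k}2^{-nk}$. One small correction: the recursion should read $T_k\le c_n V_k+\tfrac12 T_{k-1}$, with $c_n$ multiplying only the $V_k$ term; as written, $c_n(V_k+\tfrac12 T_{k-1})$ with $c_n\approx 2n$ would not produce the $2^{-(k-j)}$ decay you invoke two lines later. With that fix the interior estimate $\sum_k T_k\lesssim\sum_k V_k$ is correct.

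The genuine gap is on the lateral boundary. The set $(0,\ell(Q))\times\partial Q$ carries $|\nabla_x u|$-mass at \emph{every} level $k\ge k_0$: for each $R\in L_k$ touching $\partial Q$ and its face-neighbor $S$ outside $Q$, the face lies on $\partial\widehat Q$ and contributes $|u_R-u_S|\,|R|/2$ to the left side. Your $T_k$ does not see these pairs (both cubes must be in $L_k$), and your final paragraph absorbs only the top piece $k=k_0$. The paper's proof telescopes such a pair up to level $k_0$, producing the extra terms $2^{-nK}|u_{R_K}|\,|Q|+2^{-nK}|u_{S_K}|\,|Q|$ together with $S$-side jumps $|u_{S_k}-u_{S_{k-1}}|$ that lie \emph{outside} $\widehat Q$; so the same difficulty is glossed over there. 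In fact the inequality as literally stated is false: take $n=1$, $Q=[0,1]$, $u\equiv 0$ on $\widehat Q$, $u_{[1,1+2^{-j}]}=2^j$ for $j\ge 0$, and $u=0$ elsewhere; then the left side equals $\sum_{j\ge 0}2^j\cdot 2^{-j-1}=\infty$ while the right side is finite. A correct version replaces $\iint_{\overline{\widehat Q}}|\partial_t u|$ by $\iint_{\overline{\widehat{cQ}}}|\partial_t u|$ for some fixed $c>1$ (so that the $S$-side telescope stays in the enlarged box); this is harmless for the application to $\|C(\nabla f)\|_p$ in the paper.
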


Note the obvious meaning of $\iint_{\clos{\widehat Q}}$: The contribution from $\bdy \widehat Q\cap\R^{1+n}_+$ is to be counted.

\begin{proof}
Fix a dyadic cube $Q$.
Consider a contribution to $\nabla_x u$ from the jump across $\partial W_R\cap \partial W_S\subset \clos{\widehat Q}$, where $\ell(R)=\ell(S)$.
Go up through ancestors to a common dyadic ancestor $R_N=S_N$, and write
$$
  R= R_0\subset R_1\subset \ldots \subset R_N= S_N\supset \ldots \supset S_1\supset S_0=S.
$$
If $R_N=S_N\subset Q$,
then we estimate
$$
  |u_R-u_S| |R|\lesssim \sum_{k=1}^{N}2^{-nk}( |u_{R_{k}}-u_{R_{k-1}}| |R_k|)+
  \sum_{k=1}^{N}2^{-nk}( |u_{S_{k}}-u_{S_{k-1}}| |S_k|).
$$
For some fixed sub-cube $R_k\subsetneqq Q$, there arise in this way one such term $|u_{R_{k}}-u_{R_{k-1}}| |R_k|$
from each sub-cube $R$ of $R_k$ such that $(\partial R)\cap (\partial R_k)\neq \emptyset$.
There are at most $C 2^{(n-1)k}$ such sub-cubes with $\ell(R)= 2^{-k}\ell(R_k)$.

If $R_N=S_N\not\subset Q$, then we estimate as above, but stop at $|R_K|= |S_K|= |Q|$,
and we obtain two extra terms
$$
  2^{-nK} |u_{R_K}| |Q| + 2^{-nK} |u_{S_K}| |Q|.
$$
Summing up, using $\sum_0^\infty2^{(n-1)k}2^{-nk} =2$, we get
$$
  \iint_{\clos{\widehat Q}}|\nabla_x u| dtdx \lesssim  \iint_{\clos {\widehat Q}}|\pd_t u| dtdx+
  |Q|\sum_{Q'} |u_{Q'}|.
$$
\end{proof}

Now we are fully prepared for:

\begin{proof}[Proof of Theorem~\ref{thm:2}]

(1)
We first localize the problem to a large top cube $Q_0$.
Choose $Q_0\in\mD$ large enough so that
$$
  \int_{\R^n\setminus Q_0} |M_\mD g|^p dx \le \delta,
$$
where $\delta>0$ is to be chosen below.
Define
$$
  g_2(x):=
  \begin{cases}
    \fint_{Q_0} g dy, & \qquad x\in Q_0,\\
    g(x), & \qquad x\notin Q_0,
  \end{cases}
$$
and let $g_1:= g-g_2$.
Let $u_1$ and $u_2$ be the dyadic extensions of $g_1$ and $g_2$ respective, so that $u_1$ is non-zero only on $\widehat Q_0$.

Define the approximation $f_2$ of $u_2$ to be
$$
  f_2(t,x):=
  \begin{cases}
    \fint_{Q_0} g dy, & \qquad (t,x)\in \widehat Q_0, \\
    0, & \qquad  (t,x)\notin \widehat Q_0.
  \end{cases}
$$
Then $N_\mD(f_2-u_2)(x)\le M_\mD g(x)$ if $x\notin Q_0$ and $N_\mD(f_2-u_2)(x)\le \inf_{Q_1} M_\mD g$ if $x\in Q_0$,
where $Q_1$ is the sibling of $Q_0$.
Thus
$$
\|N_\mD(f_2-u_2)\|_p^p\le 2 \int_{\R^n\setminus Q_0} |M_\mD g|^p dx \le  (\epsilon/2)^p \|g\|_p^p,
$$
provided $2\delta\le (\epsilon/2)^p \int_{\R^n} |g|^p dx$.
Furthermore
$\|C(\nabla f_2)\|_p\lesssim |Q_0|^{1/p}(|Q_0|^{-1}\int_{Q_0} g dy)\le \|g\|_p$.
Thus we have reduced to the problem of approximating $u_1\approx f_1$.

(2)
Replacing $g$ by $g_1$,
it follows from step (1) that we may assume that $\supp g\subset Q_0\in\mD$ and $\int_{Q_0}g=0$.
Denote by $u$ the dyadic average extension of $g$, and write $u_Q:= \fint_Q g(y) dy$.
We construct the approximant $f$ using the following stopping time argument.
Given any cube $Q\in\mD$, we define the stopping cubes
$$
  \omega(Q):= \{\text{maximal } R\in\mD \text{ such that } R\subset Q \text{ and } |u_R-u_Q|\ge \epsilon M_\mD g(R) \}.
$$
We then define generations of stopping cubes under $Q_0$ inductively as follows.
\begin{gather*}
  \omega_0:= \{Q_0\}, \qquad \omega_1:= \omega(Q_0), \\
  \omega_{k+1}:= \bigcup_{Q\in \omega_k} \omega(Q), \qquad k=1, 2, \ldots \\
  \omega_*:= \bigcup_{k=0}^\infty \omega_k.
\end{gather*}
Furthermore, for $Q\in \omega_*$ we define the ``dyadic sawtooth'' region
$$
  \Omega(Q):= \widehat Q\setminus \clos{\bigcup_{R\in \omega(Q)}\widehat R}\subset \widehat Q.
$$

We define $f$ to be the locally constant function in $\R^{1+n}_+$ which takes the value $u_Q$ on $\Omega(Q)$ for each $Q\in\omega_*$, i.e.,
$$
  f_R:= u_Q, \qquad \text{when } W_R\subset \Omega(Q), Q\in \omega_*,
$$
and $f=0$ on $\R^{1+n}_+\setminus \widehat Q_0$.
From this construction it is clear that $f$ has non-tangential limits almost everywhere.
To verify that $\|N(f-u)\|_p\le \epsilon \|g\|_p$, we note directly from the stopping condition that
$$
 N_\mD(f-u)(x)= \sup_{Q\ni x, Q\in \mD} |f_Q-u_Q|\le \epsilon  \sup_{x\in Q\in \mD} M_\mD g(Q)= \epsilon M_\mD g(x),
$$
from which the estimate follows.

(3)
We next establish the main estimate, namely that of $C(\pd_t f)$.
We fix $Q_1\in \mD$ with $Q_1\subset Q_0$ and estimate
$$
 \sum_{Q\in \omega_*, Q\subset Q_1} |u_Q-u_{Q_*}| |Q|\le \frac 1\epsilon\sum_{Q\in \omega_*, Q\subset Q_1} |u_Q-u_{Q_*}|^2 \frac{|Q|}{M_\mD g(Q)},
$$
where we write $Q_*$ for the stopping parent of $Q$, that is the smallest $Q_*\in \omega_*$ such that $Q_*\supsetneqq Q$,
and exceptionally $(Q_0)_*:= Q_0$.

Define the square function
$$
  S g(x):= \left( \sum_{Q\in \omega_*, Q\subset Q_1} |\ave{g}_Q-\ave{g}_{Q_*}|^2 1_Q(x) \right)^{1/2},\qquad
  \ave{g}_Q:=\fint_Q g(y)dy.
$$
Recalling that $u_Q=\ave{g}_Q$ for $Q\in\omega_*$, Lemma~\ref{lem:truncmax} gives
\begin{multline*}
\sum_{Q\in \omega_*, Q\subset Q_1} |u_Q-u_{Q_*}|^2 \frac{|Q|}{M_\mD g(Q)} \\
\lesssim \sum_{Q\in \omega_*, Q\subset Q_1} |\ave{g}_Q-\ave{g}_{Q_*}|^2\int_{Q_1} 1_{Q}(x)\frac {dx}{M_\mD g(x)}=
\int_{Q_1}|S g(x)|^2 \frac {dx}{M_\mD g(x)}.
\end{multline*}
We now use some properties of (dyadic versions of) the Muckenhoupt weight classes $A_p$; these are easy variants of well-known results for the usual $A_p$ classes, found e.g. in \cite{Duo}, Chapter 7.
Writing
$$
  (M_\mD g)^{-1}= 1\cdot ((M_\mD g)^\gamma)^{1-q},
$$
for some $\gamma\in (0,1)$ and $q= 1+1/\gamma\in (2,\infty)$, it follows (cf. \cite{Duo}, Theorem~7.7(1) and Proposition~7.2(3)) that $(M_\mD g)^\gamma\in A_1(dx)$ and $(M_\mD g)^{-1}\in A_q(dx)\subset A_\infty(dx)$, with $A_q$ constants independent of $g$.

We now apply Proposition~\ref{prop:stoppedweightedSF}, with the collection of cubes
$\tilde \omega_*:= \sett{Q\in\omega_*}{Q\subset Q_1}$, the function
$$
  \tilde g:=
  \begin{cases}
    g(x)- \fint_{Q_1} g dx, & \qquad x\in Q_1, \\
    0, &\qquad x\notin Q_1,
  \end{cases}
$$
the weight $w:= (M_\mD g)^{-1}$ and $p=2$.
This gives
$$
  \int_{\R^n} |S_{\tilde \omega_*}\tilde g |^2 dw\lesssim \int_{\R^n} |M_\mD \tilde g|^2 dw= \int_{Q_1} |M_\mD \tilde g|^2 dw\lesssim \int_{Q_1}|M_\mD g|^2 dw.
$$
Thus
\begin{multline*}
  \int_{Q_1}|S g(x)|^2 dw\lesssim  \int_{Q_1}|S_{\tilde \omega_*} g(x)|^2 dw + \int_{Q_1}|M_\mD g|^2 dw \\ \le
  \int_{\R^n}|S_{\tilde \omega_*} \tilde g(x)|^2 dw + \int_{Q_1}|M_\mD g|^2 dw\lesssim \int_{Q_1}|M_\mD g|^2 dw
  \le |Q_1|\inf_{Q_1} M_\mD(M_\mD g),
\end{multline*}
and so
$$
  \|C(\pd_t f)\|_p \lesssim \epsilon^{-1} \| M_\mD(M_\mD g) \|_p \lesssim\epsilon^{-1} \|g\|_p.
$$

(4)
To complete the proof, we use Lemma~\ref{lem:tanggradest} and obtain the Carleson estimate
$$
  \|C(\nabla f)\|_p\approx \|C(\pd_t f)\|_p+\|C(\nabla_x f)\|_p\lesssim
  \|C(\pd_t f)\|_p+ \|M_\mD g\|_p\lesssim \|g\|_p.
$$
\end{proof}

\section{Application to harmonic measure}   \label{sec:harmmeas}

Before proving Theorem~\ref{thm:3}, we discuss in this section an important application to the solvability of the Dirichlet boundary value problem, of such $L_p$ approximability for solutions to an elliptic equation.
As noted in the introduction, in the end point case $p=\infty$, there is a well known equivalence 
between 
\begin{enumerate}
\item comparability of non-tangential maximal functions  and square functions for solutions, \label{pr1}
\item approximability of solutions by functions of bounded variation, and \label{pr2}
\item $A_\infty$ control of harmonic measure, \label{pr3}
\end{enumerate}
for a given real elliptic divergence form equation. 
It is important to note that this equivalence holds for all equations with real and possibly non-symmetric coefficents, including those which depend on the transversal direction $t$. There are known examples by Caffarelli, Fabes and Kenig~\cite{CaFK}, of symmetric such coefficients for which harmonic measure is not $A_\infty$, and therefore the approximability and comparability properties may fail as well for $t$-dependent coefficients.

In this section, the goal is to demonstrate that in the above equivalences, we may replace \eqref{pr2} by the following local version of the conclusion in  Theorem~\ref{thm:3}.
\begin{itemize}
\item[$(2p)$]  For each $0<\epsilon<1$, there exists $c_\epsilon'<\infty$ such that for every weak solution $u:\R^{1+n}_+\to \R$ to $\divv_{t,x} A(t,x)\nabla_{t,x} u(t,x)=0$
with $\|u\|_{L_\infty(\R^{1+n}_+)}\le 1$, and every cube $Q\subset \R^n$,
there exists a function $f_Q$ in $\R^{1+n}_+$ of locally bounded variation with estimates
$$
\begin{cases}
    \|N_\ell(f_Q-u)\|_{L_p(Q)}\le \epsilon |Q|^{1/p}, \\
    \|A_\ell(\nabla f_Q)\|_{L_p(Q)}\leq c_{\epsilon}  |Q|^{1/p}.
\end{cases}
$$
Here $N_\ell$ and $A_\ell$ denote versions of the non-tangential maximal and area functionals 
from Definition~\ref{defn:contfuncNCA} using cones 
$\sett{(y,t)}{|y-x|\le \alpha t, t<\ell}$ truncated 
at height $\ell= \ell(Q)$.
\end{itemize}

Note that we have used the truncated area functional $A_\ell$ in the second estimate in $(2p)$, in contrast to the Carleson functional $C$ that we used in the global version in Thm. \ref{thm:3}. This, however, is inessential by Prop. \ref{prop:AGequiv}, which also easily extends to the truncated situation by routine modifications. The chosen formulation of property $(2p)$ is motivated by its application to the harmonic measure below, where the area functional leads to the most immediate connection.

Assuming $A_\infty$ control of the harmonic measure, comparability of non-tangential maximal functions  and square functions for solutions follows by \cite{DJK}. Given such $N\approx S$ comparability, approximability follows, both in the case $p=\infty$ as in \cite{KKPT, HKMP}, and for $1<p<\infty$ as shown in Section~\ref{sec:solutions} of this paper.
Also the local approximability $(2p)$ follows from $N\approx S$ comparability, since 
our estimates are derived from pointwise estimates.
In general, without assuming such comparability, we note the following.

\begin{prop}
Consider a possibly $t$-dependent real equation
\begin{equation*}
   \divv_{t,x} A(t,x)\nabla_{t,x} u(t,x)=0. 
\end{equation*}
If the approximability property for solutions in Theorem~\ref{thm:3} holds and if
\begin{equation*}
 n/(n-1)\le p<\infty, 
\end{equation*}
then the local approximability property $(2p)$ also holds.  
\end{prop}

\begin{proof}
  Let $u$ be a solution with properties as in $(2p)$.
  Given a cube $Q\subset \R^n$, write $u= u_0+u_1$, 
  where $u_0$ is the solution to the Dirichlet problem with boundary data $\eta_Qu|_{\R^n}$,
  where $\eta_Q=1$ on $5Q$ and supported on $6Q$. 
  From the maximum principle and \cite[Lem. 4.9]{HKMP}, we have the estimate
  $$
    \abs{u_0(t,x)}\lesssim \min\Big(1, \Big[\frac{\ell(Q)}{|(t,x)-(0,x_Q)|}\Big]^{n-1+\nu}\Big), \qquad (t,x)\in\R^{1+n}_+, 
  $$
  for some $\nu>0$, where $x_Q$ denotes the center of $Q$.
  In particular, $Nu_0(x)\lesssim \min(1, (\ell(Q)/|x-x_Q|)^{n-1+\nu})$ for $x\in\R^n$, and thus
   $\|Nu_0\|_{L_p(\R^n)}\lesssim |Q|^{1/p}$ if $p\ge n/(n-1)$.
  
  Let $f_Q:= f_0+u_1$, where $f_0$ is the approximant to $u_0$ given by the assumed global approximability, so that $f_Q-u=f_0-u_0$.
We will show that this $f_Q$ qualifies for (2p). For the first estimate, this is immediate from the assumed properties of the global approximant and the observations just made, namely
\begin{equation*}
  \|N_\ell(f_Q-u)\|_{L_p(Q)}
  \leq\|N(f_0-u_0)\|_{L_p(\R^n)}
  \leq\epsilon\|Nu_0\|_{L_p(\R^n)}
  \lesssim\epsilon\abs{Q}^{1/p}.
\end{equation*}
For the second estimate, we separately consider the two terms $f_0$ and $u_1$. First,
\begin{equation*}
  \|A_\ell(\nabla f_0)\|_{L_p(Q)}
  \lesssim\|C(\nabla f_0)\|_{L_p(\R^n)}
  \lesssim c_\epsilon \|Nu_0\|_{L_p(\R^n)}
  \lesssim c_\epsilon \abs{Q}^{1/p}.
\end{equation*}
Finally, we turn to $u_1$. Let $R\subset\R^n$ be a cube with $\max(\ell(R),\dist(R,Q))\leq\ell:=\ell(Q)$. Since $u_1$ is a bounded solution with vanishing boundary values on $5Q\supset 2R$, we may apply the boundary Cacciopoli estimate (see for example \cite[(1.3B)]{KKPT}) to deduce that
\begin{equation*}
  \fint_{\widehat R}\abs{\nabla u_1}\lesssim
  \Big(\fint_{\widehat R}\abs{\nabla u_1}^2\Big)^{1/2}
  \lesssim\frac{1}{\ell(R)}\Big(\fint_{\widehat{2R}}\abs{u_1}^2\Big)^{1/2}
  \lesssim\frac{1}{\ell(R)},
\end{equation*}
which shows that
\begin{equation*}
  C_{\ell}(\nabla u_1)(x)=\sup_{\substack{R\owns x \\ \ell(R)\leq\ell}}\frac{1}{\abs{R}}\int_{\widehat R}\abs{\nabla u_1}\lesssim 1
\end{equation*}
in a neighbourhood of $Q$. By the $L_p$-comparability of $C_\ell$ and $A_\ell$ (a routine modification Prop. \ref{prop:AGequiv}), this gives
\begin{equation*}
  \| A_\ell(\nabla u_1)\|_{L_p(Q)}\lesssim \abs{Q}^{1/p}
\end{equation*}
and completes the verification of $(2p)$.  
\end{proof}

We now consider the main result in this section, namely that $(2p)$ implies \eqref{pr3}.

\begin{thm}
   Let $1<p<\infty$ and consider a real equation $\divv_{t,x} A(t,x)\nabla_{t,x} u(t,x)=0$. If the local approximability property $(2p)$ holds, then harmonic measure belongs
   to $A_\infty$. 
\end{thm} 

\begin{proof}
Our proof is an adaption of the proof of \cite[Thm. 2.3]{KKPT}, the case $p=\infty$, and we 
only point out the changes needed for $p<\infty$.
Fix a solution $u$ to $\divv_{t,x} A(t,x)\nabla_{t,x} u(t,x)=0$ with $\|u\|_{L_\infty(\R^{1+n}_+)}\le 1$.
Following \cite{KKPT}, but not their notation, we consider the counting function 
\begin{equation*}
\begin{split}
  K_r(x)
  :=\max\Big\{k &: \exists\text{ $k$ points } z_i=(x_i,t_i)\in \Gamma_r(x)\text{ such that} \\
    &\phantom{:}\ t_i<\theta t_{i-1}\text{ and }\abs{u(z_i)-u(z_{i-1})}\geq\epsilon_0\Big\},
\end{split}
\end{equation*}
where $\epsilon_0,\theta\in(0,1)$ are parameters, and 
\begin{equation*}
  \Gamma_r(x):=\sett{(y,t)}{|y-x|\le \alpha_0 t, t<r}
\end{equation*}
is a truncated cone based at $x$. In the proof of \cite[Thm. 2.3]{KKPT}, the classical $\epsilon$-approximability property is only used through the following consequence established in  \cite[Lem. 2.9]{KKPT}:
\begin{equation*}
  \fint_Q K_{\ell(Q)}(x) dx\le c(\epsilon_0,\theta).
\end{equation*}
Thus, it suffices to establish the same conclusion under our approximation property $(2p)$. We will in fact show that
\begin{equation}   \label{eq:countest}
  \fint_Q K_{\ell(Q)}(x)^p dx \le C(\epsilon_0, \theta),
\end{equation}
from which the earlier estimate follows by H\"older's inequality.

To prove \eqref{eq:countest}, we fix a cube $Q$, pick an approximant $f_Q$ given by the 
hypothesis $(2p)$, and note the estimate
$$
  |Q_b|
  := \abs{\sett{y\in Q}{N_\ell(f_Q-u)>C_1\epsilon}}
  \le (C_1\epsilon)^{-p}\|N_\ell(f_Q-u)\|_{L_p(Q)}^p
  \le |Q|/C_1^p.
$$
Let $\mathcal W$ denote a Whitney covering of $Q_b$ by cubes $R\subset Q$.
For $x\in Q$, 
we note that the pointwise estimate
$$
   |u-f_Q|\le C_1 \epsilon
$$
holds in $\Gamma_{\ell(Q)}(x)\setminus \bigcup_{R\in\mathcal W} \widehat{C_2R}$ for some $C_2<\infty$, provided that $\alpha$ appearing in $(2p)$ is chosen large enough
depending on $\alpha_0$.  Let $R_x$ be a largest cube $R\in\mathcal W$ such that $\widehat{C_2R}$ intersects $\Gamma_{\ell(Q)}(x)$. Then $x\in C_2' R_x$ for some $C_2'\ge C_2$, and the pointwise estimate above holds throughout $\Gamma_{\ell(Q)}(x)\setminus\Gamma_{\ell(C_2 R_x)}(x)$.

Now, let $z_i=(t_i,y_i)\in\Gamma_{\ell(Q)}(x)$, with $i=1,\ldots,k$, be points as in the definition of $K_r(x)$, and let $h$ be the largest index such that $t_h>C_2\ell(R_x)$. It follows from interior H\"older regularity that a jump estimate $\abs{u(w_i)-u(w_{i-1})}\geq\frac34\epsilon_0$ persists for all $w_j\in B(y_j,\eta t_j)\times\{t_j\}$ and a suitably small $\eta$. If $C_1\epsilon\le\epsilon_0/4$, it follows that also $\abs{f_Q(w_i)-f_Q(w_{i-1})}\geq\frac14\epsilon_0$ for $w_j\in (B(y_j,\eta t_j)\times\{t_j\})\cap\Gamma_r(x)$ and $i\leq h$. Estimating the difference $\abs{f_Q(w_i)-f_Q(w_{i-1})}$ by an integral of $\nabla f_Q$ over the connecting line, and integrating averaging over $w_j\in (B(y_j,\eta t_j)\times\{t_j\})\cap\Gamma_r(x)$ for $j=i,i-1$, it follows that,
\begin{equation*}
  \tfrac14\epsilon_0\lesssim\int_{\Gamma_{t_{i-1}}(x)\setminus\Gamma_{t_i}(x)}\abs{\nabla f_Q(t,y)}t^{-n}dt\,dy
\end{equation*}
and summing over $i=2,\ldots,h$, that
\begin{equation*}
  (h-1)\lesssim A_{\ell(Q)}(\nabla f_Q)(x).
\end{equation*}
On the other hand, the remaining points $z_i$, with $i=h+1,\ldots,k$, all belong to $\Gamma_{\ell(C_2 R_x)}(x)$, so that $k-h\leq K_{\ell(C_2 R_x)}(x)$, by definition. So, altogether, we have
\begin{equation*}
  k=1+(h-1)+(k-h)\leq 1+C_3A_{\ell(Q)}(\nabla f_Q)(x)+K_{\ell(C_2 R_x)}(x).
\end{equation*}
Recalling that $K_{\ell(Q)}(x)$ is the maximal value of such numbers $k$, and that $C_2' R_x\owns x$, we arrive at
\begin{equation*}
  K_{\ell(Q)}\leq 1+C_3 A_{\ell(Q)}(\nabla f_Q)+\sup_{R\in\mathcal W} 1_{C_2' R}K_{\ell(C_2' R)},
\end{equation*}
where we also estimated $C_2\leq C_2'$.

%
%

To prove \eqref{eq:countest}, we set 
$$
  D:= \sup_Q \fint_Q K_{\ell(Q)}(x)^p dx
$$
and integrate 
\begin{equation*}
\begin{split}
 K_{\ell(Q)}(x)^p &\leq 3^{p-1} (1+C_3^p A_{\ell(Q)}(\nabla f_Q)^p+ \sup_{R\in\mathcal W} 1_{C_2' R}(x)K_{\ell(C_2' R)}(x)^p) \\
&\leq 3^{p-1} (1+C_3^p A_{\ell(Q)}(\nabla f_Q)^p+ \sum_{R\in\mathcal W} 1_{C_2' R}(x)K_{\ell(C_2' R)}(x)^p),
\end{split}
\end{equation*}
to get
$$
  \fint_Q K_{\ell(Q)}(x)^p dx\le
    3^{p-1}(1+ C_3^p c_\epsilon^p + |Q|^{-1} (C_2')^n|Q_b| D ).
$$
Choosing $C_1$ large (and then $\epsilon$ small) and taking supremum over $Q$, we can hide the second term on the right hand side, on the left hand side.
To guarantee the finiteness of $D$ in the first place, one may initially replace $K_r(x)$ by $\min(K_r(x),M)$ and pass to the monotone limit $M\to\infty$ in the end.)
This proves \eqref{eq:countest}, which concludes the proof.
\end{proof}

\section{Approximation of solutions to elliptic equations}  \label{sec:solutions}

In this section, we prove Theorem~\ref{thm:3}, but first fix notation.
We only consider dyadic cubes of every $N$th generation for some fixed $N$, and refer to these simply as ``dyadic cubes'', notation 
$$
\mD^\delta:=\sett{Q\in\mD}{\ell(Q)=2^{-kN}, k\in\Z},
$$
where we write $\delta:=2^{-N}$ for the change of scale between consecutive generations of these dyadic cubes. The main advantage of this consideration is to gain a convenient control of boundary effects: For positive integers $m$, we have the dichotomy that each dyadic subcube $Q'\subsetneqq Q$ is either a ``centred cube'', meaning
$$
  Q'\subset (1-2m\delta)Q,
$$
or a ``boundary cube'', meaning
$$
 Q'\subset Q\setminus (1-2m\delta)Q.
$$
As long as $2m\delta<1$, both these consist of a positive fraction of the total volume of $Q$, but this could never be achieved with $\delta=2^{-1}$ when every dyadic child is a boundary cube.

The $\mD^\delta$-dyadic versions of the functionals $M$, $N$ and $C$ are denoted by
$M_{\mD^\delta}$, $N_{\mD^\delta}$ and $C_{\mD^\delta}$.
One verifies that the estimates analogous to Proposition~\ref{prop:dyadicvsnondya} hold.
We also use the following notation for $Q\in\mD^\delta$, where the parameter $\eta>0$ will be eventually chosen small relative to the given $\epsilon$ appearing in the statement of Theorem~\ref{thm:3}.
(The parameter $\delta=2^{-N}$ will also be chosen small, but independent of $\epsilon$.)
\begin{itemize}
  \item $\widehat Q:=(0,\ell(Q))\times Q$ is the Carleson box, as before.
  \item $p_Q:=((1-\eta)\ell(Q),c_Q)$ is the ``corkscrew point'' of $\widehat Q$, where $c_Q$ denotes the center of $Q$.
  \item $\widetilde{Q}:=\{\ell(Q)\}\times \eta Q$ is a small hypersurface on the top boundary of $\widehat Q$, around the centre.
  \item $W_Q:=\widehat Q\setminus\bigcup_{\mD^\delta\ni Q'\subsetneqq Q}\widehat Q'=[\delta\ell(Q),\ell(Q))\times Q$ is a Whitney-type rectangle.
  \item $\Gamma_Q:=\{(t,x):t>\delta\ell(Q)+\dist(x,Q)\}$ is an epigraph domain containing $W_Q$.
\end{itemize}

An outline of the proof of Theorem~\ref{thm:3} is as follows.
We construct the approximant $f$ as follows. 
We define a family of ``stopping cubes'' $\mathcal{S}$ in \eqref{eq:Sstop}, and the corresponding sawtooth regions $\Omega_{\mathcal S}(S):=\bigcup_{Q:\pi_{\mathcal S}Q=S}W_Q$, where $\pi_{\mathcal S}Q=S$ means that $S$ is the smallest stopping cube such that $S\supset Q$; the family $\mathcal{S}$ is built in such a way that the value of $u(p_Q)$ varies relatively little among all $Q$ with $\pi_{\mathcal S}Q=S$. The first approximation to $u$ is then given by $\varphi_1:=\sum_{S\in\mathcal{S}}u(p_S)\cdot 1_{\Omega_{\mathcal{S}}(S)}$.

However, this approximation fails to be good on the Whitney regions $W_R$, where the oscillation of $u$ is relatively large, more precisely, when it happens that
\begin{equation}   \label{eq:defnfamR}
  \osc_{W_R}u:=\sup_{z,w\in W_R}\abs{u(z)-u(w)}>\epsilon M_{\mD^\delta}(Nu)(R):=\epsilon \sup_{Q\supset R}\fint_Q Nu(x) dx.
\end{equation}
Note that here the defining condition is simpler than the stopping conditions considered above, in that it can be directly checked for any cube, without reference to the previously chosen members of the stopping family. We label the family of the cubes $R$ in \eqref{eq:defnfamR} by $\mathcal R$, and introduce the additional correction $\varphi_2:=\sum_{R\in\mathcal{R}}(u-\varphi_1)\cdot 1_{W_R}$. The final approximation is then given by $f:=\varphi_1+\varphi_2$. The verification of $N_{\mathcal{D}^\delta}(f-u)\lesssim\epsilon Nu$ will then be straightforward from the construction of the collections $\mathcal{S}$ and $\mathcal{R}$. The pointwise estimate for the Carleson functional, $C_{\mathcal{D}^\delta}(\nabla f)\lesssim_\epsilon M_{\mathcal{D}^\delta}(Nu)$, is verified separately for $\varphi_1$ and $\varphi_2$ in place of $f$; these bounds depend in particular on the Carleson property of both $\mathcal{S}$ and $\mathcal{R}$, established in Lemmas~\ref{lem:CarlesonS} and!
  \ref{lem:CarlesonR}, and the estimates are completed in Lemmas~\ref{lem:phi1} and \ref{lem:phi2}. 

Lemmas~\ref{lem:CarlesonS} and \ref{lem:CarlesonR} ultimately build on two estimates which we borrow from \cite{HKMP}.
These are
\begin{equation}   \label{est:N<S}
  \int_{\theta Q} |u(\psi(x),x)-u(p_Q)|^2 dx\lesssim \iint_{(t,x)\in \widehat Q, t>\psi(x)}|\nabla u(t,x)|^2(t-\psi(x)) dtdx,
\end{equation}
and
\begin{equation}   \label{est:S<N}
 \frac 1{|Q|}\iint_{(t,x)\in\widehat Q, t>\psi(x)}|\nabla u(t,x)|^2(t-\psi(x)) dtdx\lesssim \sup_{\substack{(t,x)\in\R^{1+n}_+ \\  t>\psi(x) }}|u(t,x)|^2,
\end{equation}
for weak solutions $u$ to an elliptic equation $Lu=0$ as in Theorem~\ref{thm:3}.
Here $\theta\in(0,1)$, $p_Q$ is a ``cork-screw point'' above $Q$ in the Carleson box $\widehat Q$, and
$\psi\ge 0$ is a Lipschitz function. Note that the implicit constants in the two estimates depend on the ellipticity constants $\lambda_A$ and $\|A\|_\infty$ from Theorem~\ref{thm:3} and on $\|\nabla \psi\|_\infty$ and dimension, but not otherwise on $A$, $\psi$, $u$ or $Q$.
The first estimate \eqref{est:N<S} follows from \cite[Cor. 1.17]{HKMP} upon replacing $u$ by
$u-u(p_Q)$ and using interior regularity and Poincar\'e's inequality to remove the error term.
The second estimate \eqref{est:S<N} follows from \cite[Cor. 1.10]{HKMP} upon pulling back that result from the half space
to the epigraph domain $t>\psi(x)$.

The construction in this section builds on that in the case $p=\infty$ from \cite{Gar, D3, KKPT, HKMP}, but with non-trivial modifications.
The construction \eqref{eq:defnfamR} of the family $\mathcal{R}$ of Whitney regions with large oscillation of $u$,
goes back to \cite{Gar}, as does the stopping construction \eqref{eq:Sstop}.
The main novelty here is that for $L_p$, $p<\infty$, we require a variable threshold in these constructions,
expressed in terms of the maximal function of $u$.
This requires a second parallel stopping construction \eqref{eq:princstopping}, which has the effect of freezing this threshold.
Such multiple stopping time constructions have appeared earlier in \cite{ARR, LNbook, R2}.

Finally, to pass from dyadic sawtooths to Lipschitz sawtooths to be able to use the above estimates \eqref{est:N<S} and \eqref{est:S<N}, we follow the construction in 
\cite{HKMP} in Lemma~\ref{lem:sortOfVitali}. We now turn to the details.

\subsection{Construction of stopping cubes}  \label{subsec:stoppingconstr}

We start with some generalities.
Let $\mathcal{C}(Q',Q)\in\{\operatorname{true},\operatorname{false}\}$ be some ``criterion'' that assigns a truth value to every pair of (dyadic) cubes $Q'\subset Q$. We specifically agree that $\mathcal{C}(Q,Q)=\operatorname{false}$ for every cube $Q$.
By the ``stopping family'' with initial collection $\mathcal{I}\subset\mD^\delta$ and stopping criterion $\mathcal{C}$ we understand the family of dyadic cubes $\mathcal{F}=\mathcal{F}(\mathcal I,\mathcal C)$ constructed as follows: We initialize $\mathcal{F}:=\mathcal{I}$. Then we add to $\mathcal{F}$ all $F'\in\mD^\delta\setminus\mathcal{F}$ such that
\begin{enumerate}
  \item[{\rm (a)}]  $\mathcal{C}(F',F)$ is true for some $F\in\mathcal{F}$ with $F'\subsetneqq F$, and
  \item[{\rm (b)}] $F'$ is not contained in any $F''\subsetneqq F$ with either $F''\in\mathcal{F}$ or $\mathcal{C}(F'',F)$ is true.
\end{enumerate}
We repeat this addition indefinitely. This is seen to yield a well defined family $\mathcal{F}\subset\mD^\delta$.

For every $Q\in\mD^\delta$, let $\pi_{\mathcal F}Q$ denote the minimal $F\in\mathcal{F}$ such that $Q\subset F$, where the possibility that $F=Q$ is not excluded. The stopping family with initial collection $\mathcal{I}$ and stopping criterion $\mathcal{C}$ has the property that $\mathcal{C}(Q,F)$ is false
 whenever $\pi_{\mathcal F}Q=F$; namely, the latter means by definition that there \emph{does not exist} any intermediate stopping cube $F'$ with $Q\subset F'\subsetneq F$, thus all intermediate cubes $Q'$ with $Q\subset Q'\subsetneq F$ do \emph{not} satisfy the stopping condition $\mathcal{C}(Q',F)$, hence $\mathcal{C}(Q',F)$ is false for all these $Q'$, and in particular for $Q'=Q$ itself. For $F\in\mathcal F$, we denote by $\operatorname{ch}_{\mathcal{F}}(F):=\{F'\in\mathcal{F}\text{ maximal: }F'\subsetneqq F\}$ the family of its $\mathcal F$-\emph{children}.

A stopping criterion $\mathcal{C}$ is \emph{sparse}, if
\begin{equation*}
  \sum_{\substack{Q'\subsetneqq Q\text{ maximal:}\\ \mathcal{C}(Q',Q)\text{ is true} }}\abs{Q'}\leq \tau\abs{Q}
\end{equation*}
for some fixed $\tau<1$. It is straightforward to check that if the initial collection $\mathcal{I}$ is Carleson, that is
$$
  \sup_{Q\in\mathcal{I}}\frac{1}{\abs{Q}} \sum_{\mathcal{I}\ni R\subset Q} |R|<\infty,
$$
 and if the stopping criterion $\mathcal{C}$ is sparse, then the stopping family $\mathcal{F}=\mathcal{F}(\mathcal I,\mathcal C)$ is Carleson.

A priori, the stopping collections produced by a dyadic algorithm may not be so well behaved geometrically. This is to some extent remedied by the following lemma, which builds on ideas from \cite[Sec. 5]{HKMP}.

\begin{lemma}\label{lem:sortOfVitali}
Consider a cube $Q$ and a disjoint collection $\mathcal{Q}$ of its dyadic subcubes. 
We say that $Q'\in\mathcal{Q}$ is
\begin{itemize}
  \item  \emph{centred (in $Q$)} if $Q'\subset(1-2\delta)Q$, and
  \item \emph{uncovered (by $\mathcal{Q}$)} if $(\ell(Q'')-\ell(Q') )/\dist(Q',Q'')\leq \delta^{-1}$ for all $Q''\in\mathcal{Q}$.
\end{itemize}
(See Figure~\ref{fig:covered} below for an illustration of covered and uncovered cubes.)
Let
\begin{equation*}
   \mathcal{Q}^*:=\{Q'\in\mathcal{Q}:Q'\text{ is centred and uncovered}\}.
\end{equation*}
For some constants $\tau\in(0,1)$ and $C$, we then have
\begin{equation*}
  \sum_{Q'\in\mathcal{Q}}\abs{Q'}
  \leq\tau\abs{Q}+C\sum_{Q''\in\mathcal{Q}^*}\abs{Q''}.
\end{equation*}
\end{lemma}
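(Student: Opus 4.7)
The plan is to reduce everything to the \emph{uncovered} cubes and control the difference by the boundary annulus $Q\setminus(1-2\delta)Q$. For each $Q'=Q^{(0)}\in\mathcal Q$ that is covered, pick some $Q^{(1)}\in\mathcal Q$ witnessing that fact, i.e., with $\ell(Q^{(1)})-\ell(Q^{(0)})>\delta^{-1}\dist(Q^{(0)},Q^{(1)})$. If $Q^{(1)}$ is still covered, iterate and pick $Q^{(2)}$, and so on. Since all cubes in $\mathcal{D}^\delta$ have sizes differing by a factor at least $\delta^{-1}$, each step of this chain satisfies $\ell(Q^{(i+1)})\ge \delta^{-1}\ell(Q^{(i)})$, so the side lengths grow geometrically; since all cubes lie in $Q$, the chain terminates at some uncovered $Q^{(k)}\in\mathcal Q$. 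The main analytic content of the proof is then a geometric containment estimate: $Q^{(0)}\subset C_n Q^{(k)}$, where $C_n$ depends only on dimension (in particular \emph{not} on the chain length $k$ nor on $\delta\in(0,\tfrac12]$).

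To prove that containment, I will telescope the distances between centres $c^{(i)}$ of $Q^{(i)}$ using the triangle inequality
\begin{equation*}
\abs{c^{(i)}-c^{(i+1)}}\le \dist(Q^{(i)},Q^{(i+1)}) + \tfrac{\sqrt n}{2}\bigl(\ell(Q^{(i)})+\ell(Q^{(i+1)})\bigr)
\le \delta\,\ell(Q^{(i+1)}) + \tfrac{\sqrt n}{2}\bigl(\ell(Q^{(i)})+\ell(Q^{(i+1)})\bigr).
\end{equation*}
Since $\ell(Q^{(j)})\le \delta^{k-j}\ell(Q^{(k)})$, all three terms telescope into geometric sums bounded by a dimensional constant times $\ell(Q^{(k)})$. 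Adding the term $\tfrac{\sqrt n}{2}\ell(Q^{(0)})$ to reach the farthest corner of $Q^{(0)}$ gives $Q^{(0)}\subset C_n Q^{(k)}$.

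With this containment in hand, assign to each $Q'\in\mathcal Q$ its terminal uncovered cube $Q^*(Q')\in\mathcal Q_u:=\{Q\in\mathcal Q:Q\text{ uncovered}\}$. Since $\mathcal Q$ is disjoint, all $Q'$'s mapped to a common $Q^*$ lie inside $C_n Q^*$ and have total measure at most $(C_n)^n\abs{Q^*}$, so
\begin{equation*}
  \sum_{Q'\in\mathcal Q}\abs{Q'}\le (C_n)^n\sum_{Q^*\in\mathcal Q_u}\abs{Q^*}
  = (C_n)^n\Big(\sum_{Q^*\in\mathcal Q^*}\abs{Q^*}+\!\!\sum_{\substack{Q^*\in\mathcal Q_u\\ Q^*\text{ non-centred}}}\!\!\abs{Q^*}\Big).
\end{equation*}
The non-centred terminals are disjoint subsets of $Q\setminus(1-2\delta)Q$, so their total measure is at most $\bigl(1-(1-2\delta)^n\bigr)\abs{Q}$. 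Taking $\delta$ small enough (which is permitted since $\delta=2^{-N}$ with $N$ chosen large and independent of $\epsilon$) makes $\tau:=(C_n)^n\bigl(1-(1-2\delta)^n\bigr)<1$, and $C:=(C_n)^n$ gives the stated estimate. The main obstacle is the uniform geometric bound $Q^{(0)}\subset C_nQ^{(k)}$; the temptation is to iterate an inclusion of the form $Q^{(i)}\subset(1+c\delta)Q^{(i+1)}$, which would multiplicatively blow up along a long chain, so one must instead telescope the displacements of centres directly and exploit the $\delta^{k-j}$ decay of side lengths to keep the bound uniform in $k$.
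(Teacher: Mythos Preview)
Your proof is correct and follows essentially the same approach as the paper: both use the chain argument to find an uncovered terminal $Q^{(k)}$ and a telescoping sum of distances to obtain $Q'\subset C\,Q^{(k)}$. The only organizational difference is where the centred/non-centred split is made: the paper first separates the cubes $Q'$ into those in $(1-2m\delta)Q$ (with $m=6$) and those in the boundary annulus, and then shows that a centred $Q'$ has a centred uncovered terminal (since $5Q''$ meets $(1-2m\delta)Q$ forces $Q''\subset(1-2(m-5)\delta)Q$); you instead sum over all uncovered terminals first and then discard the non-centred ones into the annulus, which multiplies the annulus term by $C_n^n$ and hence requires $\delta$ small depending on dimension, whereas the paper only needs $\delta<1/12$. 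Both variants are valid given the freedom to choose $\delta=2^{-N}$.
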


\begin{proof}
If $Q'$ is not uncovered, then $\ell(Q'')-\delta^{-1}\dist(Q',Q'')>\ell(Q')$ for some $Q''\in\mathcal{Q}$, and we say that this $Q''$ \emph{covers} $Q'$. Then in particular $\ell(Q')<\ell(Q'')$, and hence $\ell(Q')\leq\delta\ell(Q'')$, but also $\dist(Q',Q'')<\delta\ell(Q'')$.

Further, if $Q'$ is not uncovered, it is covered by some $Q_1$ which, if not uncovered, is covered by some $Q_2$, and so on. Since $\ell(Q_k)$ increases geometrically and is bounded by $\ell(Q)$, the chain must terminate after finitely many steps with some uncovered $Q_k$. The $\ell^\infty$-distance of the furthest point of $Q'=:Q_0$ from the centre of $Q_k$ can be at most
\begin{equation*}
\begin{split}
  \sum_{j=0}^{k-1}[\ell(Q_j)+\dist(Q_j,Q_{j+1})]+\frac12\ell(Q_k)
  &\leq\sum_{j=0}^{k-1}2\delta^{k-j}\ell(Q_k)+\frac12\ell(Q_k) \\
  &<\Big(\frac{2\delta}{1-\delta}+
  \frac12\Big)\ell(Q_k)\leq\frac{5}{2}\ell(Q_k),
\end{split}
\end{equation*}
since $\delta\leq\frac12$,
and hence $Q'\subset 5Q_k$. 
We have
\begin{equation*}
  \sum_{Q'\in\mathcal{Q}}\abs{Q'}
  =\sum_{\substack{Q'\in\mathcal{Q} \\ Q'\subset(1-2m\delta)Q}}\abs{Q'}+\sum_{\substack{Q'\in\mathcal{Q} \\ Q'\subset Q\setminus(1-2m\delta)Q}}\abs{Q'}=:I+II,
\end{equation*}
where $II\leq(1-(1-2m\delta)^n)\abs{Q}$ by disjointness. On the other hand, every $Q'$ appearing in $I$ is contained in $5Q''$ for some uncovered $Q''\in\mathcal{Q}$. In particular, $5Q''$ intersects with $Q'\subset(1-2m\delta)Q$. Thus, the $\ell^\infty$-distance of the furthest point of $5Q''$ from the centre of $Q$ is at most
\begin{equation*}
  (1-2m\delta)\ell(Q)/2 +5\ell(Q'') \leq (5\delta+\frac12-m\delta)\ell(Q),
\end{equation*}
and hence $Q''\subset 5Q''\subset (1-2(m-5)\delta)Q$. Since all $Q'$ in $I$ are covered by such $5Q''$, we have
\begin{equation*}
  I\leq\sum_{\substack{Q''\text{ uncovered} \\ Q''\subset(1-2(m-5)\delta)Q}}\abs{5Q''}
  \leq 5^n\sum_{Q''\in\mathcal{Q}^*}\abs{Q''},
\end{equation*}
provided that we take $m\geq 6$, and also $\delta<1/(2m)$ for term $II$.
\end{proof}

\bigskip

Let us now fix as our initial collection $\mathcal{I}$ some increasing chain of cubes $I_0\subsetneqq I_1\subsetneqq I_2\subsetneqq\cdots$ that exhaust $\R^n$. Clearly this is Carleson.


We define the ``principal cubes'' $\mathcal{P}$ as the stopping family with initial collection $\mathcal{I}$ and the stopping criterion $\mathcal{C}(Q',Q)$ given by
\begin{equation}   \label{eq:princstopping}
  M_{\mD^\delta}(Nu)(Q')=\sup_{R\supseteq Q'}\fint_{R}Nu(x) dx>A\cdot M_{\mD^\delta}(Nu)(Q),
\end{equation}
for some fixed $A>1$ to be chosen.
To verify that this criterion is sparse,
and therefore $\mathcal{P}$ is Carleson, select a disjoint family of subcubes $Q'\subset Q$ that satisfy \eqref{eq:princstopping}. Then
\begin{equation*}
\begin{split}
  \sum\abs{Q'}
  &\leq\abs{\{M_{\mathcal{D}^\delta}(1_{Q}Nu)>A\cdot M_{\mathcal{D}^\delta}(Nu)(Q)\}} \\
  &\leq\frac{1}{A\cdot M_{\mathcal{D}^\delta}(Nu)(Q)}\int_Q Nu(x)dx=\frac{\abs{Q}}{A}
\end{split}
\end{equation*}
by the weak-type $(1,1)$ estimate for the maximal operator $M_{\mathcal{D}^\delta}$.

The usefulness of the numbers $M_{\mD^\delta}(Nu)(Q)$ lies in the fact that they control the values of $u$ in the entire graph-domain $\Gamma_Q$.

\begin{lemma}\label{lem:uControlByMNu}
We have the estimate
\begin{equation*}
  \sup_{\Gamma_Q}\abs{u}\leq M_{\mD^\delta}(Nu)(Q).
\end{equation*}
\end{lemma}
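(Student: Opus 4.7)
The plan is simply to choose the cube $R=Q$ in the defining supremum $M_{\mathcal D^\delta}(Nu)(Q)=\sup_{\mathcal D^\delta\ni R\supseteq Q}\fint_R Nu$ and show that on $Q$ the non-tangential maximal function $Nu$ already dominates $|u(t,y)|$ pointwise for every $(t,y)\in\Gamma_Q$. The bound then follows at once by averaging.

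To carry this out, I would fix $(t,y)\in\Gamma_Q$ and an arbitrary $x\in Q$. The defining condition $t>\delta\ell(Q)+\dist(y,Q)$ of $\Gamma_Q$, together with the elementary estimate
\begin{equation*}
 |y-x|\le \dist(y,Q)+\diam(Q)\le \dist(y,Q)+\sqrt n\,\ell(Q),
\end{equation*}
gives $|y-x|<\alpha t$ provided the aperture $\alpha$ used in the definition of $Nu$ is chosen large enough compared to $\sqrt n/\delta$; indeed, under that choice one has $\alpha t>\alpha\dist(y,Q)+\alpha\delta\ell(Q)\ge \dist(y,Q)+\sqrt n\,\ell(Q)\ge |y-x|$. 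Since by Proposition~\ref{prop:aperture} different apertures yield equivalent norms, this fixing of $\alpha$ is harmless for the $L_p$ bounds to follow. It means that $(t,y)$ lies in the cone at $x$, so $Nu(x)\ge |u(t,y)|$ for every $x\in Q$. Averaging over $Q$ and noting $Q\in\mathcal D^\delta$ with $Q\supseteq Q$ yields
\begin{equation*}
  |u(t,y)|\le \fint_Q Nu(x)\,dx\le M_{\mathcal D^\delta}(Nu)(Q),
\end{equation*}
and taking the supremum in $(t,y)\in\Gamma_Q$ finishes the proof.

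The only mild subtlety is that $Nu$ is defined via an essential supremum, while the conclusion is a pointwise supremum. This is resolved by recalling that $u$ is a weak solution to the elliptic equation $\divv A\nabla u=0$ and hence continuous (indeed locally H\"older continuous) in $\R^{1+n}_+$, so pointwise values of $u$ are well defined and $\sup_{\Gamma_Q}|u|$ coincides with the essential supremum. No real obstacle is present; the lemma is essentially a geometric unwinding of the definitions once $\alpha$ is large enough relative to $1/\delta$.
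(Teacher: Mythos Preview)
Your proof is correct and follows essentially the same route as the paper: both show that $\Gamma_Q$ is contained in the cone of aperture roughly $1/\delta$ over every point $x\in Q$, deduce $\sup_{\Gamma_Q}|u|\le\inf_{Q}Nu\le\fint_Q Nu\le M_{\mD^\delta}(Nu)(Q)$, and remark that the aperture of $N$ must be taken large enough. Your version is in fact slightly more careful, tracking the factor $\sqrt n$ in the aperture and addressing the essential-versus-pointwise supremum via interior regularity.
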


\begin{proof}
It is enough to observe that $\Gamma_Q\subset \Gamma_{1/\delta}(x):=\{(t,y):\abs{y-x}<\delta^{-1}t\}$ for any $x\in Q$, and therefore
\begin{equation*}
  \sup_{\Gamma_Q}\abs{u}\leq\inf_{x\in Q}\sup_{\Gamma_{1/\delta}(x)}\abs{u}
  \leq\inf_{x\in Q}Nu(x)\leq \fint_Q Nu dx \leq M_{\mD^\delta}(Nu)(Q),
\end{equation*}
provided that the aperture defining $N$ is at least $\delta^{-1}$.
\end{proof}

Finally, we define the ``stopping cubes'' $\mathcal{S}$ as the stopping family with initial collection $\mathcal{P}$ and the stopping criterion $\mathcal{C}(Q',Q)$ given by
\begin{equation}\label{eq:Sstop}
  \abs{u(p_{Q'})-u(p_Q)}>\epsilon M_{\mD^\delta}(Nu)(Q').
\end{equation}
We observe the following self-improvement of this criterion.

\begin{lemma}\label{lem:SstopBootstrap}
Under the condition \eqref{eq:Sstop}, we have
\begin{equation*}
  \abs{u(z)-u(p_Q)}\gtrsim\epsilon M_{\mD^\delta}(Nu)(Q')\qquad
  \text{for all } z\in \widetilde{Q}'=\{\ell(Q')\}\times \eta Q',
\end{equation*}
provided that $\eta$ satisfies $\eta^\alpha\ll\epsilon$, where $\alpha>0$ is the H\"older exponent from interior regularity estimates for $u$.
\end{lemma}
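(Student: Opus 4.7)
The plan is to exploit interior H\"older regularity to show that $u$ varies only by $O(\eta^\alpha M_{\mD^\delta}(Nu)(Q'))$ between the corkscrew point $p_{Q'}$ and any point $z \in \widetilde{Q}'$, and then absorb this error into the stopping condition \eqref{eq:Sstop} by choosing $\eta^\alpha$ small relative to $\epsilon$.

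More precisely, the first step is the triangle inequality
\begin{equation*}
  \abs{u(z) - u(p_Q)} \geq \abs{u(p_{Q'}) - u(p_Q)} - \abs{u(z) - u(p_{Q'})},
\end{equation*}
where the first term on the right is $> \epsilon M_{\mD^\delta}(Nu)(Q')$ by the stopping condition \eqref{eq:Sstop}. It remains to show that the second term is at most a small fraction of this quantity.

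For this, note that $p_{Q'} = ((1-\eta)\ell(Q'), c_{Q'})$ and any $z = (\ell(Q'), y)$ with $y \in \eta Q'$ are separated by Euclidean distance $\lesssim \eta\,\ell(Q')$. I would fix a ball (say $B = B(p_{Q'}, \ell(Q')/4)$) that contains both points for $\eta$ sufficiently small, and verify that $B \subset \Gamma_{Q'}$ when $\eta, \delta$ are small enough: since the spatial component of every point of $B$ lies within $\ell(Q')/4$ of $c_{Q'}$, hence inside $Q'$, one has $\dist(\cdot, Q') = 0$ and the $t$-coordinate is at least $(1-\eta - 1/4)\ell(Q') > \delta\ell(Q')$. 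Interior H\"older regularity for weak solutions of $\divv A \nabla u = 0$ (valid because of the real, $t$-independent, accretive coefficients) then yields
\begin{equation*}
  \abs{u(z) - u(p_{Q'})} \lesssim \left( \frac{\abs{z-p_{Q'}}}{\ell(Q')} \right)^\alpha \|u\|_{L_\infty(B)} \lesssim \eta^\alpha \|u\|_{L_\infty(\Gamma_{Q'})},
\end{equation*}
and Lemma~\ref{lem:uControlByMNu} bounds the last factor by $M_{\mD^\delta}(Nu)(Q')$.

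Combining these estimates, if $\eta^\alpha \leq c\,\epsilon$ for a sufficiently small absolute constant $c$, then $\abs{u(z) - u(p_{Q'})} \leq \tfrac{1}{2}\epsilon M_{\mD^\delta}(Nu)(Q')$, and the triangle inequality above gives $\abs{u(z) - u(p_Q)} \geq \tfrac{1}{2}\epsilon M_{\mD^\delta}(Nu)(Q')$, which is the desired conclusion. The only subtle point in the argument is the geometric verification that a ball of radius comparable to $\ell(Q')$ around $p_{Q'}$ fits inside $\Gamma_{Q'}$ so that interior regularity may be applied with bounds coming from Lemma~\ref{lem:uControlByMNu}; this relies on the smallness of both $\eta$ (so the corkscrew point is truly interior to the box) and $\delta$ (so that the Whitney layer is thin), but is otherwise routine.
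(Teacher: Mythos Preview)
Your proof is correct and follows essentially the same approach as the paper: both use the triangle inequality to reduce to bounding $\abs{u(z)-u(p_{Q'})}$, then apply interior H\"older regularity together with the bound $\abs{z-p_{Q'}}\lesssim\eta\ell(Q')$ to get the factor $\eta^\alpha$, and finally control the size of $u$ near $p_{Q'}$ by $M_{\mD^\delta}(Nu)(Q')$. The only cosmetic difference is that the paper bounds the H\"older estimate by the $L^2$ average $\ave{\abs{u}^2}_{\widetilde W_{Q'}}^{1/2}\lesssim\inf_{Q'}Nu$, whereas you use $\|u\|_{L_\infty(B)}\leq\sup_{\Gamma_{Q'}}\abs{u}$ and invoke Lemma~\ref{lem:uControlByMNu}; these are equivalent routes to the same conclusion.
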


\begin{proof}
If $z\in\widetilde{Q'}:=\{\ell(Q')\}\times\eta Q'$, the interior regularity of solutions $u$ to $\divv A\nabla u=0$ shows that
\begin{equation*}
  \abs{u(z)-u(p_{Q'})}\lesssim\Big(\frac{\abs{z-p_{Q'}}}{\ell(Q')}\Big)^{\alpha}
  \ave{\abs{u}^2}_{\widetilde W_{Q'}}^{1/2}
  \lesssim \eta^{\alpha}\inf_{Q'}Nu\ll \epsilon M_{\mD^\delta}(Nu)(Q'),
\end{equation*}
where $\widetilde{W}_{Q'}$ is a slight expansion of $W_{Q'}$. Thus, we have
\begin{equation*}
  \abs{u(z)-u(p_Q)}\gtrsim\epsilon M_{\mD^\delta}(Nu)(Q')\qquad\text{for all } z\in Q'.\qedhere
\end{equation*}
\end{proof}


The main estimate here is that both the stopping family $\mathcal S$, and the collection of large oscillation cubes $\mathcal R$ introduced in \eqref{eq:defnfamR}, satisfy the Carleson condition.


\begin{lemma}\label{lem:CarlesonS}
For the stopping cubes  $\mathcal{S}$, we have the Carleson measure estimate
\begin{equation*}
  \sum_{S\in\mathcal{S}, S\subset Q_0}\abs{S}\lesssim\abs{Q_0}
\end{equation*}
for all dyadic cubes $Q_0$.
\end{lemma}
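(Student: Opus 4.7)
The plan. The Carleson property of $\mathcal{P}$ being already established, it suffices to prove a \emph{corona estimate}
\[\sum_{S\in\mathcal{S}_P}|S|\lesssim_\epsilon|P|,\qquad\mathcal{S}_P:=\{S\in\mathcal{S}:\pi_{\mathcal{P}}(S)=P\},\]
for each $P\in\mathcal{P}$. Given this, the full Carleson property of $\mathcal{S}$ follows by decomposing $\{S\in\mathcal{S}:S\subset Q_0\}$ according to principal ancestry and summing against the Carleson property of $\mathcal{P}$, together with the same argument applied locally on $Q_0\cap\pi_{\mathcal{P}}(Q_0)$.

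For the corona estimate, set $M:=M_{\mathcal{D}^\delta}(Nu)(P)$, so that $M\leq M_{\mathcal{D}^\delta}(Nu)(Q)\leq AM$ uniformly across the corona of $P$. For each $S^*\in\mathcal{S}_P$ with $\mathcal{S}$-children $\{S_j\}$, apply Lemma~\ref{lem:sortOfVitali} to extract the centered-uncovered subfamily $\{S_{j^*}\}$ and construct a Lipschitz function $\psi_{S^*}:S^*\to[0,\ell(S^*)]$ of Lipschitz constant $\lesssim\delta^{-1}$, equal to $\ell(S_{j^*})$ on $\eta S_{j^*}$ and vanishing elsewhere; its graph bounds the sawtooth $\Omega(S^*):=\widehat{S^*}\setminus\bigcup_j\widehat{S_j}$. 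The $\Omega(S^*)$ are pairwise disjoint, and their union is the corona sawtooth $\Omega_P:=\widehat{P}\setminus\bigcup_{P'\in\operatorname{ch}_{\mathcal{P}}(P)}\widehat{P'}$; moreover, a short case analysis on the natural Whitney cube $R$ of any $(t,x)\in\Omega_P$ shows $R\supsetneq P'$ for every $\mathcal{P}$-child $P'$ of $P$ that meets $R$, hence $\pi_{\mathcal{P}}(R)=P$ and $\sup_{\Omega_P}|u|\leq AM$ by Lemma~\ref{lem:uControlByMNu}. By Lemma~\ref{lem:SstopBootstrap}, $|u(z)-u(p_{S^*})|\gtrsim\epsilon M_{\mathcal{D}^\delta}(Nu)(S_{j^*})\geq\epsilon M$ on each $\widetilde{S_{j^*}}$, so summing \eqref{est:N<S} over $S^*\in\mathcal{S}_P$ yields
\[\epsilon^2 M^2\Sigma^*\lesssim\sum_{S^*}\iint_{\Omega(S^*)}|\nabla u|^2(t-\psi_{S^*})\,dt\,dx\leq\iint_{\Omega_P}|\nabla u|^2\,t\,dt\,dx,\qquad\Sigma^*:=\sum_{S^*,j^*}|S_{j^*}|.\]

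The bound $\iint_{\Omega_P}|\nabla u|^2\,t\,dt\,dx\lesssim A^2M^2|P|$ would then give $\Sigma^*\lesssim_\epsilon|P|$, and Lemma~\ref{lem:sortOfVitali} summed over $S^*\in\mathcal{S}_P$ would deliver $(1-\tau_0)\sum_{S\in\mathcal{S}_P}|S|\leq|P|+C\Sigma^*\lesssim_\epsilon|P|$, completing the corona estimate.

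The main technical obstacle is proving this last upper bound. The most direct approach, via \eqref{est:S<N} applied with $Q=P$ and $\psi=\psi_P$ a Lipschitz function describing the (centered-uncovered) $\mathcal{P}$-children of $P$, provides only $\iint_{\Omega_P}|\nabla u|^2(t-\psi_P)\,dt\,dx\lesssim A^2M^2|P|$; the weights $t$ and $t-\psi_P$ differ substantially in the thin slabs $\{\ell(P')<t<2\ell(P')\}\cap\widehat P$ just above the $\mathcal{P}$-children $P'$. I expect these slabs to be handled by a Caccioppoli-type estimate combined with a recursive application of the corona estimate to the sub-principal cubes $P'\in\operatorname{ch}_{\mathcal{P}}(P)$, exploiting the geometric decay $\sum_{P'}|P'|\leq|P|/A$ inherited from the principal stopping criterion to ensure the recursion converges.
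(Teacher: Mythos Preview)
Your overall architecture matches the paper's: reduce to a corona estimate over $\mathcal S_P$, use Lemma~\ref{lem:sortOfVitali} to pass to centred--uncovered $\mathcal S$-children, apply Lemma~\ref{lem:SstopBootstrap} together with \eqref{est:N<S} to bound $\Sigma^*$ from above by a square-function integral, and then close with \eqref{est:S<N}. The gap you yourself flag is real, but the fix is simpler than the Caccioppoli-plus-recursion scheme you sketch.

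The wasteful step is the inequality $(t-\psi_{S^*})\leq t$ in
\[
\sum_{S^*}\iint_{\Omega(S^*)}|\nabla u|^2(t-\psi_{S^*})\,dt\,dx\leq\iint_{\Omega_P}|\nabla u|^2\,t\,dt\,dx.
\]
Instead of discarding $\psi_{S^*}$, introduce a single global Lipschitz function
\[
\psi_P^2(x):=\inf_{Q:\pi_{\mathcal P}Q=P}\big[\delta\ell(Q)+\dist(x,Q)\big],
\]
so that $\Omega_{\psi_P^2}:=\{t>\psi_P^2(x)\}=\bigcup_{Q:\pi_{\mathcal P}Q=P}\Gamma_Q$. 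Each sawtooth $\Omega(S^*)$ (more precisely, the graph region $\{\psi_{S^*}<t\}\cap\widehat{S^*}$ produced by \eqref{est:N<S}) is contained in $\Omega_{\psi_P^2}$, and since both are graph domains this forces $\psi_{S^*}\geq\psi_P^2$ there, hence $(t-\psi_{S^*})\leq(t-\psi_P^2)$. Summing over $S^*$ and using disjointness gives
\[
\sum_{S^*}\iint_{\Omega(S^*)}|\nabla u|^2(t-\psi_{S^*})\,dt\,dx
\leq\iint_{\Omega_{\psi_P^2}\cap\widehat{Q}_0}|\nabla u|^2(t-\psi_P^2)\,dt\,dx,
\]
and now \eqref{est:S<N} applies \emph{directly} with $\psi=\psi_P^2$: the supremum on its right-hand side is over $\Omega_{\psi_P^2}=\bigcup_Q\Gamma_Q$, where Lemma~\ref{lem:uControlByMNu} and the $\mathcal P$-stopping rule give $|u|\leq A\,M$. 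This yields the bound $\lesssim A^2M^2|Q_0|$ with no mismatch of weights and no recursion. (One also separates off the $\mathcal S$-children $S'\in\mathcal P$ before invoking Lemma~\ref{lem:SstopBootstrap}, since for those the stopping criterion \eqref{eq:Sstop} need not hold; their total measure is $\leq|Q_0|$ by disjointness.)
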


\begin{lemma}\label{lem:CarlesonR}
For the large oscillation cubes $\mathcal{R}$, we have the Carleson measure estimate
\begin{equation*}
  \sum_{R\in\mathcal{R}, R\subset Q_0}\abs{R}\lesssim\abs{Q_0}
\end{equation*}
for all dyadic cubes $Q_0$.
\end{lemma}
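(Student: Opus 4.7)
The plan is to extract from the oscillation condition $\osc_{W_R} u>\epsilon M_{\mathcal D^\delta}(Nu)(R)$ a pointwise lower bound on an integral of $|\nabla u|^2$, and then sum these bounds by means of the local square-function estimate \eqref{est:S<N} applied on the sawtooth regions associated with the principal cubes $\mathcal P$ from Subsection~\ref{subsec:stoppingconstr}.

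First, using interior regularity for solutions to $\divv A\nabla u=0$ (De~Giorgi--Nash--Moser) to bound $\osc_{W_R}u$ by the $L^2$-average of $u-\bar u$ on a slight enlargement $W_R^*$ of $W_R$, combined with Poincar\'e's inequality and the fact that $t\asymp\ell(R)$ throughout $W_R^*$, I obtain
\[
  \iint_{W_R^*}|\nabla u|^2\,t\,dt\,dx
  \;\gtrsim\;(\osc_{W_R}u)^2\,|R|
  \;>\;\epsilon^2\,M_{\mathcal D^\delta}(Nu)(R)^2\,|R|
  \qquad\text{for each }R\in\mathcal R.
\]

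Next, for each principal cube $P\in\mathcal P$, I consider the principal sawtooth $\Omega_P:=\bigcup_{Q\,:\,\pi_{\mathcal P}Q=P}W_Q\subset\widehat P$. By Lemma~\ref{lem:uControlByMNu} together with the principal stopping rule \eqref{eq:princstopping}, $\sup_{\Omega_P}|u|\lesssim M_{\mathcal D^\delta}(Nu)(P)$. After regularizing the lower boundary of $\Omega_P$ by a Lipschitz function $\psi_P$ (in the spirit of Lemma~\ref{lem:sortOfVitali}), the estimate \eqref{est:S<N} applied to the graph domain $\{t>\psi_P\}\cap\widehat P$ yields
\[
  \iint_{\Omega_P}|\nabla u|^2\,t\,dt\,dx\;\lesssim\;|P|\,M_{\mathcal D^\delta}(Nu)(P)^2.
\]
Summing the first display over $R\in\mathcal R$ with $\pi_{\mathcal P}R=P$ (using the bounded overlap of the enlargements $W_R^*$) and dividing by the trivial bound $M_{\mathcal D^\delta}(Nu)(R)\ge M_{\mathcal D^\delta}(Nu)(P)$ for $R\subset P$ produces the per-principal Carleson bound
\[
  \sum_{R\in\mathcal R,\;\pi_{\mathcal P}R=P}|R|\;\lesssim\;\epsilon^{-2}\,|P|.
\]

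Finally, for an arbitrary $Q_0\in\mathcal D^\delta$, I split $\{R\in\mathcal R:R\subset Q_0\}$ according to whether $\pi_{\mathcal P}R\subset Q_0$ (summed via the per-principal bound and the Carleson property of $\mathcal P$ already established in Subsection~\ref{subsec:stoppingconstr}) or $\pi_{\mathcal P}R\supsetneq Q_0$ (all sharing the common ancestor $P_0:=\pi_{\mathcal P}Q_0$, handled by a $Q_0$-localized variant of the sawtooth argument with $\widehat P$ replaced by $\widehat{Q_0}$). Adding the two contributions gives $\sum_{R\in\mathcal R,\,R\subset Q_0}|R|\lesssim\epsilon^{-2}|Q_0|$, as claimed. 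The main technical obstacle is the Lipschitz regularization of the sawtooth's bottom boundary: one must choose $\psi_P$ so that $\{t>\psi_P\}\cap\widehat P$ simultaneously contains each $W_R$ with $\pi_{\mathcal P}R=P$ while keeping $t-\psi_P\asymp t$ there, and remains inside $\Omega_P$ so that the sup bound for $|u|$ survives---this is the same sawtooth technology as in \cite{HKMP}, and the ``centred and uncovered'' dichotomy of Lemma~\ref{lem:sortOfVitali} is precisely what prevents large, nearby principal children from spoiling the regularization.
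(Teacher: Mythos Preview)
Your overall strategy matches the paper's proof: extract $|R|\,M_{\mathcal D^\delta}(Nu)(R)^2\lesssim\iint_{\widetilde W_R}|\nabla u|^2\, t\,dt\,dx$ from interior regularity, sum within a fixed principal region, apply the local $S\lesssim N$ estimate \eqref{est:S<N} on a Lipschitz graph domain to bound that sum, and then use the Carleson property of $\mathcal P$ to pass to a general $Q_0$. The reduction you describe in your final paragraph is also exactly the paper's.

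There is, however, a real gap in your description of the Lipschitz regularization. You ask that the graph domain $\{t>\psi_P\}\cap\widehat P$ simultaneously (i) satisfy $t-\psi_P\asymp t$ on each $W_R$ with $\pi_{\mathcal P}R=P$, and (ii) remain inside the sawtooth $\Omega_P$ so that the sup bound on $|u|$ survives. These two requirements are in general incompatible: if some dyadic child $R'$ of such an $R$ happens to lie in $\mathcal P$, then $W_{R'}\not\subset\Omega_P$, so (ii) forces $\psi_P\geq\delta\ell(R)$ on $R'$, which destroys (i) near the bottom of $W_R$. Your appeal to Lemma~\ref{lem:sortOfVitali} is also misplaced: that lemma handles the geometry of $\mathcal S$-children inside a fixed $S$ and is used only in the proof of Lemma~\ref{lem:CarlesonS}; it plays no role here.

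The paper's resolution is simply to drop requirement (ii). One defines $\psi_P^{**}(x):=\inf_{Q:\pi_{\mathcal P}Q=P}\psi_Q^*(x)$ with $\psi_Q^*(x)=\delta'\ell(Q^*)+\dist(x,Q^*)$ for some $\delta'<\delta$, so that the resulting graph domain $\Omega_{\psi_P^{**}}=\bigcup_{Q:\pi_{\mathcal P}Q=P}\Gamma_Q^*$ is \emph{strictly larger} than $\Omega_P$, is automatically Lipschitz, and satisfies $t-\psi_P^{**}\gtrsim t$ on each $\widetilde W_R$. The sup bound for $|u|$ is recovered not through containment in $\Omega_P$ but directly from the non-tangential maximal function: each $\Gamma_Q^*$ lies inside a cone $\Gamma_\gamma(x)$ for every $x\in Q$, so $\sup_{\Gamma_Q^*}|u|\leq\inf_Q Nu\leq M_{\mathcal D^\delta}(Nu)(Q)\leq A\, M_{\mathcal D^\delta}(Nu)(P)$ by the principal stopping rule \eqref{eq:princstopping}. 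With this correction, your argument is complete and coincides with the paper's.
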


\begin{proof}[Proof of Lemma~\ref{lem:CarlesonS}]

(A) First we make a preliminary simplification of the estimate based on Lemma~\ref{lem:sortOfVitali}.
By considering the maximal $\mathcal S$-cubes contained in $Q_0$, we may assume without loss of generality that $Q_0\in\mathcal S$. We then write
\begin{equation*}
  \sum_{S\in\mathcal{S}, S\subset Q_0}\abs{S}
  =\sum_{\substack{S\in\mathcal{S}, S\subset Q_0 \\ \pi_{\mathcal P}S=\pi_{\mathcal P}Q_0}}\abs{S}
  +\sum_{P\in\mathcal{P}, P\subsetneqq Q_0} \sum_{\substack{S\in\mathcal{S} \\ \pi_{\mathcal P}S=P}}\abs{S},
\end{equation*}
and we claim that it suffices to prove the required bound $\abs{Q_0}$ for the first term. Namely, if this is done, we simply apply this result, with $Q_0=\pi_{\mathcal P}Q_0=P$, to the inner sum in the second term, which shows that this inner sum is bounded by $\abs{P}$. Then the Carleson property of the collection $\mathcal P$ completes the estimate.

So we concentrate on the first term, and abbreviate $\pi_{\mathcal P}Q_0=:P$ for convenience. We also drop the summation condition ``$S\in\mathcal S$'', with the implicit understanding that this is always in force.

With Lemma~\ref{lem:sortOfVitali} applied to $Q=S$ and $\mathcal{Q}=\operatorname{ch}_{\mathcal S}(S)$ for each relevant $S$, we obtain by indexing the cubes by their parents instead that (Note that each $S$, except for the maximal ones, is a child of another $S$, and the sum over the maximal ones is bounded by $\abs{Q_0}$, by disjointness.)
\begin{equation*}
\begin{split}
  \sum_{S\subset Q_0, \pi_{\mathcal P}S=P}\abs{S}
  &\leq\abs{Q_0}+\sum_{S\subset Q_0, \pi_{\mathcal P}S=P}\sum_{S'\in\operatorname{ch}_{\mathcal S}(S)}\abs{S'} \\
  &\leq\abs{Q_0}+\sum_{S\subset Q_0, \pi_{\mathcal P}S=P}\Big(\tau\abs{S}+C\sum_{S'\in\operatorname{ch}^*_{\mathcal S}(S)}\abs{S'}\Big),
\end{split}
\end{equation*}
where
\begin{equation*}
  \operatorname{ch}^*_{\mathcal S}(S)
  :=\{S'\in\operatorname{ch}_{\mathcal S}(S): S'\text{ centred in $S$ and uncovered by }\operatorname{ch}_{\mathcal S}(S)\}.
\end{equation*}
The second term can then be absorbed to the left, since $\tau<1$.

We further observe the following. If $S'\in\mathcal{P}$ for some $S'$ appearing in the inner sum on the right, this together with $S'\in\operatorname{ch}_{\mathcal S}(S)$ and $\pi_{\mathcal P}S=P$ implies that $S'\in\operatorname{ch}_{\mathcal P}P$. But these cubes are pairwise disjoint. Since $S'\subset S\subset Q_0$, they are also contained in $Q_0$, hence their total volume adds up to at most $\abs{Q_0}$, which may be absorbed into the first term on the right. So altogether we find that
\begin{equation*}
  \sum_{S\subset Q_0, \pi_{\mathcal P}S=P}\abs{S}
  \lesssim\abs{Q_0}+\sum_{S\subset Q_0, \pi_{\mathcal P}S=P}\sum_{S'\in\operatorname{ch}^*_{\mathcal S}(S)\setminus\mathcal P}\abs{S'},
\end{equation*}
and it remains to bound the last double sum by $\abs{Q_0}$.

(B) We now aim to use the local $N\lesssim S$ estimate \eqref{est:N<S}.
We first treat one of the inner sums over $S'\in\operatorname{ch}^*_{\mathcal S}(S)\setminus\mathcal P$ for a fixed $S$. The significance of the restriction $S'\notin\mathcal{P}$ comes from the fact that we then know that $S'$ was chosen as a stopping cube by the criterion \eqref{eq:Sstop}. By Lemma~\ref{lem:SstopBootstrap}, this gives
\begin{equation*}
  M_{\mD^\delta}(Nu)(P)^2\abs{S'}
  \leq M_{\mD^\delta}(Nu)(S')^2\abs{S'}
  \lesssim M_{\mD^\delta}(Nu)(S')^2\abs{\widetilde S'}
  \lesssim \int_{\widetilde S'}\abs{u-u(p_{S})}^2 dx,
\end{equation*}
where we allow the dependence on $\epsilon$ in the implicit constants.

We then consider the Lipschitz function
\begin{equation*}
  \psi_S^1(x):=\sup_{S'\in\operatorname{ch}_{\mathcal S}(S)}\psi_{S'}(x),\qquad\psi_{S'}(x):=\max(\ell(S')-\delta^{-1}\dist(x,S'),0).
\end{equation*}
This is closely related to the notion of coveredness, and illustrated in Figure~\ref{fig:covered}.

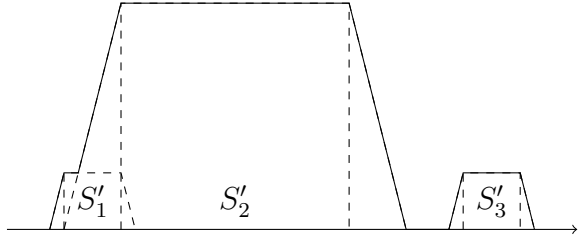
\begin{figure}
\begin{tikzpicture}[scale=3]
  \draw [->] (-0.5,0) -- (2,0);
  \draw [dashed] (0,0) -- (0,1) -- (1,1) -- (1,0);
  \draw [dashed] (-0.25,0) -- (0,1);
  \draw [dashed] (1,1) -- (1.25,0);
  \draw [dashed] (-0.25,0) -- (-0.25,0.25) -- (0,0.25) -- (0.0625,0);
  \draw [dashed] (-0.3125,0) -- (-0.25,0.25);
  \draw [dashed] (1.5,0) -- (1.5,0.25) -- (1.75,0.25) -- (1.75,0);
  \draw [dashed] (1.4375,0) -- (1.5,0.25);
  \draw [dashed] (1.75,0.25) -- (1.8125,0);
  \draw (-0.3125,0) -- (-0.25,0.25) -- (-0.1875,0.25) -- (0,1) -- (1,1) -- (1.25,0) -- (1.4375,0) -- (1.5,0.25) -- (1.75,0.25) -- (1.8125,0);
  \node at (0.5,0.125) {$S_2'$};
  \node at (-0.125,0.125) {$S_1'$};
  \node at (1.625,0.125) {$S_3'$};
\end{tikzpicture}
\caption{\label{fig:covered}A possible configuration of cubes $S_i'\in\operatorname{ch}_{\mathcal{S}}(S)$: $S_2'$ and $S_3'$ are uncovered, but $S_1'$ is covered by $S_2'$. Dashed lines show the Carleson boxes $\widehat S_i'$ and the graphs of $\psi_{S_i'}$, except where overwritten by the continuous line, which shows the graph of $\psi_S^1$.}
\end{figure}

The function $\psi^1_S$ has two important features.
\begin{itemize}
  \item $\widehat S'\cap\Omega_{\psi_S^1}=\emptyset$, where $\Omega_{\psi_S^1}:=\{(t,x)\in\R^{1+n}_+:\psi_S^1(x)<t\}$, for every $S'\in\operatorname{ch}_{\mathcal S}(S)$.
  \item $\psi_S^1(x)=\ell(S')$ for every $x\in S'$, if $S'\in\operatorname{ch}_{\mathcal S}(S)$ is uncovered.
\end{itemize}
This allows us to write, using Lemma~\ref{lem:SstopBootstrap} in the first step,
\begin{equation}\label{eq:useEstimate1}
\begin{split}
  M_{\mD^\delta}(Nu)(P)^2    \sum_{S'\in\operatorname{ch}^*_{\mathcal S}(S)\setminus\mathcal{P} }\abs{S'}
  & \lesssim     \sum_{S'\in\operatorname{ch}^*_{\mathcal S}(S)\setminus\mathcal{P} }
   \int_{\widetilde S'}\abs{u-u(p_S)}^2 dx \\
   &\leq\int_{(1-\delta)S}\abs{u(\psi_S^1(x),x)-u(p_S)}^2 dx \\
   &\lesssim\iint_{\Omega_{\psi_S^1}\cap\widehat{S}}\abs{\nabla u}^2(t-\psi_S^1(x)) dt dx,
\end{split}
\end{equation}
where in the last step we have used the local $N\lesssim S$ estimate \eqref{est:N<S}.

(C) We now aim to use the local $S\lesssim N$ estimate \eqref{est:S<N}.
In order to sum over all relevant $S$, let us next denote
\begin{equation*}
  \psi_P^2(x):=\inf_{Q:\pi_{\mathcal P}Q=P}[\delta\ell(Q)+\dist(x,Q)].
\end{equation*}
Then $\psi_P^2$ is a Lipschitz function, and
\begin{equation*}
  \Omega_{\psi_P^2}:=\{(t,x):\psi_P^2(x)<t\}=\bigcup_{Q:\pi_{\mathcal P}Q=P}\Gamma_Q\supset\bigcup_{Q:\pi_{\mathcal P}Q=P}W_Q,
\end{equation*}
and hence $\abs{u}\lesssim M_{\mD^\delta}(Nu)(P)$ on this set by Lemma~\ref{lem:uControlByMNu} and the stopping condition \eqref{eq:princstopping}.

Returning to \eqref{eq:useEstimate1}, we observe that
\begin{equation*}
  \Omega_{\psi_S^1}\cap\widehat S\subset \widehat S\setminus_{S'\in\operatorname{ch}_{\mathcal S}(S)}\widehat S'
  =\bigcup_{Q:\pi_{\mathcal S}Q=S}W_Q\subset \widehat{S}\cap \Omega_{\psi_P^2},
\end{equation*}
for all $S$ such that $\pi_{\mathcal P}S=P$,
where the first inclusion shows that these sets are pairwise disjoint in $S$. Since both ends of the inclusion involve graph-domains, we also see that $\psi^1_S(x)\ge \psi^2_P(x)$. This allows to estimate and sum over $S$ in \eqref{eq:useEstimate1} as follows.
\begin{equation}\label{eq:useEstimate2}
\begin{split}
   \sum_{S\subset Q_0, \pi_{\mathcal P}S=P} &\iint_{\Omega_{\psi_S^1}\cap\widehat{S}}\abs{\nabla u}^2(t-\psi_S^1(x)) dt dx \\
   &\leq \iint_{\Omega_{\psi_P^2}\cap\widehat{Q}_0}\abs{\nabla u}^2(t-\psi_P^2(x)) dt dx \\
   &\lesssim \abs{Q_0}\cdot\|u\|_{L^\infty(\Omega_{\psi_P^2})}^2
     \lesssim \abs{Q_0}\cdot M_{\mD^\delta}(Nu)(P)^2,
\end{split}
\end{equation}
where the penultimate step is by the local $S\lesssim N$ estimate \eqref{est:S<N}.

A combination of \eqref{eq:useEstimate1} and \eqref{eq:useEstimate2} shows that the factor $M_{\mD^\delta}(Nu)(P)^2$ cancels from both sides, and we are left with
\begin{equation*}
  \sum_{S\subset Q_0, \pi_{\mathcal P}S=P}\sum_{S'\in\operatorname{ch}_{\mathcal S}^*(S)\setminus\mathcal P}\abs{S'}
  \lesssim\abs{Q_0},
\end{equation*}
which was left to prove, to complete the proof of the Lemma.
\end{proof}

\begin{proof}[Proof of Lemma~\ref{lem:CarlesonR}]
By arguing as in the beginning of the proof of Lemma~\ref{lem:CarlesonS}, we find that it is enough to prove that
\begin{equation*}
  \sum_{\substack{R\in\mathcal{R}, R\subset Q_0 \\ \pi_{\mathcal P}R=P }}\abs{R}\lesssim\abs{Q_0},
\end{equation*}
where $P:=\pi_{\mathcal P}Q_0$.

By interior regularity of solutions to $u$ to $\divv A\nabla u=0$, it follows that for all $z,w\in W_R$,
\begin{equation*}
\begin{split}
  \abs{u(z)-u(w)}
  &\lesssim\Big(\frac{\abs{z-w}}{\ell(R)}\Big)^{\alpha}\ell(R)\Big(\frac{1}{\abs{W_R}}\iint_{\widetilde W_R}\abs{\nabla u}^2 dtdx\Big)^{1/2} \\
  &\lesssim\Big(\frac{1}{\abs{R}}\iint_{\widetilde W_R}\abs{\nabla u}^2 t \,dtdx\Big)^{1/2},
\end{split}
\end{equation*}
where $\widetilde W_R$ is a slight expansion of the Whitney rectangle $W_R$. Hence for $R\in\mathcal R$, we have
\begin{equation}\label{eq:MvsGrad}
  M_{\mD^\delta}(Nu)(R)^2\abs{R}\lesssim\big(\osc_{W_R}u\big)^2\abs{R}
  \lesssim\iint_{\widetilde W_R}\abs{\nabla u}^2 t \,dtdx.
\end{equation}

Let $W_R^*=[\delta'\ell(R),\kappa'\ell(R))\times R^*$ be a slightly bigger expansion and $R^*$ its projection onto $\R^n$, where $\delta'\in(0,\delta)$. Let
\begin{equation*}
  \psi_R^*(x):=\delta'\ell(R^*)+\dist(x,R^*),\qquad
  \Gamma_R^*:=\{(t,x):t>\psi_R^*(x)\}.
\end{equation*}
Then $\widetilde{W}_R\subset\Gamma_R^*$ and
\begin{equation}\label{eq:tVstMinus}
   t\lesssim t-\psi_R^*(x)\quad\text{for all }(t,x)\in\widetilde{W}_R. 
\end{equation}
Let further
\begin{equation*}
  \psi_P^{**}(x):=\inf_{Q:\pi_{\mathcal P}Q=P}\psi_R^*(x),
\end{equation*}
so that
\begin{equation*}
  \bigcup_{Q:\pi_{\mathcal P}Q=P}W_Q\subset\bigcup_{Q:\pi_{\mathcal P}Q=P}\Gamma_Q^*=\{(t,x):t>\psi_P^{**}(x)\}=:\Omega_{\psi_P^{**}}.
\end{equation*}
It follows that
\begin{equation}\label{eq:CarlesonRmainStep}
\begin{split}
  M_{\mD^\delta}(Nu)(P)^2 &\sum_{\substack{R\in\mathcal{R}, R\subset Q_0\\ \pi_{\mathcal P}R=P}}\abs{R} \\
  &\leq\sum_{\substack{R\in\mathcal{R}, R\subset Q_0\\ \pi_{\mathcal P}R=P}}M_{\mD^\delta}(Nu)(R)^2\abs{R} \\
  &\lesssim\sum_{\substack{R\in\mathcal{R}, R\subset Q_0\\ \pi_{\mathcal P}R=P}}\iint_{\widetilde W_R}\abs{\nabla u}^2(t-\psi_P^{**})dtdx
     \qquad\text{by \eqref{eq:MvsGrad} and \eqref{eq:tVstMinus}} \\
  &\overset{(*)}{\lesssim}\iint_{\Omega_{\psi_P^{**}}\cap\widehat{\widetilde{Q}_0}}\abs{\nabla u}^2(t-\psi_P^{**})dtdx \\
  &\lesssim\abs{Q_0}\cdot\|u\|_{L^\infty(\Omega_{\psi_P^{**}})}^2\qquad\text{by the local $S<N$ bound \eqref{est:S<N}} \\
   &\overset{(**)}{\lesssim}\abs{Q_0}\cdot M_{\mD^\delta}(Nu)(P)^2.
\end{split}
\end{equation}
In $(*)$, we used the bounded overlap of the regions $\widetilde{W}_R$, which is an easy consequence of the geometry of the Whitney regions, and their containment in $\widehat{\widetilde{Q}_0}$, a slight expansion of the Carleson box $\widehat{Q_0}$. In the last step $(**)$, we used the fact that $\Gamma_R^*\subset\Gamma_\gamma(x):=\{(t,y):\abs{y-x}<\gamma t\}$ for all $x\in R$, provided that $\gamma$ is large enough, and therefore
\begin{equation*}
  \sup_{\Gamma_R^*}\abs{u}\leq \inf_{x\in R}Nu(x)\leq M_{\mD^\delta}(Nu)(R)\lesssim M_{\mD^\delta}(Nu)(P)
\end{equation*}
whenever $\pi_{\mathcal P}R=P$, provided that the aperture defining $Nu$ is large enough.

Observing that $M_{\mD^\delta}(Nu)(P)^2$ cancels from both sides of \eqref{eq:CarlesonRmainStep}, we have established the required bound.
\end{proof}

\subsection{The $\epsilon$-approximating functions}  \label{subsec:approximant}

As a first approximation, consider the piecewise constant function
\begin{equation*}
  \varphi_1:=u(p_S)\quad\text{on}\quad\Omega_{\mathcal{S}}(S):=\bigcup_{Q:\pi_{\mathcal S}Q=S}W_Q.
\end{equation*}

\begin{lemma}  \label{lem:phi1}
We have the estimate
\begin{equation*}
  \iint\abs{\nabla(1_{\widehat Q_0}\varphi_1)}dtdx\lesssim \int_{Q_0}Nu\, dx
\end{equation*}
for all dyadic cubes $Q_0$.
\end{lemma}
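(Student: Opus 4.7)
The function $1_{\widehat{Q_0}}\varphi_1$ is piecewise constant: it equals $u(p_S)$ on each dyadic sawtooth $\Omega_{\mathcal{S}}(S)\cap\widehat{Q_0}$ for $S\in\mathcal{S}$ with $S\cap Q_0\neq\emptyset$, and $0$ outside $\widehat{Q_0}$. Its distributional gradient is thus a finite $\R^{1+n}$-valued measure concentrated on the interfaces between adjacent pieces, with total variation equal to the sum of $|\text{jump}|\cdot\mathcal{H}^n(\text{face})$ over all such faces. My first step would be to apply Lemma~\ref{lem:tanggradest} to $\varphi_1$ on $\overline{\widehat{Q_0}}$, reducing control of the spatial part $|\nabla_x\varphi_1|$ to the vertical part $|\partial_t\varphi_1|$ plus a boundary correction $|Q_0|\sum_{Q'}|(\varphi_1)_{Q'}|$ over the $O(1)$ neighbouring cubes $Q'$ of scale $\ell(Q_0)$; the contributions from $\partial\widehat{Q_0}$ itself induced by the cutoff are handled analogously.

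The vertical derivative $\partial_t\varphi_1$ is supported exactly on the horizontal faces $\{\ell(S')\}\times S'$ for $S'\in\mathcal{S}$ strictly inside its $\mathcal{S}$-parent, where $\varphi_1$ jumps from $u(p_{\pi_{\mathcal{S}}S'})$ above to $u(p_{S'})$ below, plus a jump on the top $\{\ell(Q_0)\}\times Q_0$ from the cutoff. Assuming (WLOG, by passing to the maximal $\mathcal{S}$-subcubes of $Q_0$) that $Q_0\in\mathcal{S}$, this gives
$$
\iint|\partial_t(1_{\widehat{Q_0}}\varphi_1)|\,dtdx=\sum_{\substack{S'\in\mathcal{S}\\ S'\subsetneq Q_0}}|u(p_{S'})-u(p_{\pi_{\mathcal{S}}S'})|\cdot|S'|+|u(p_{Q_0})|\cdot|Q_0|.
$$

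For the pointwise bound on each $|u(p_S)|$, I would fix the aperture defining $N$ large enough, relative to the construction parameters $\eta$ and $\delta$, so that the corkscrew point $p_S$ lies in the non-tangential cone over every $y\in S$. This yields $|u(p_S)|\le\inf_{y\in S}Nu(y)$ for every $S\in\mathcal{S}$, hence
$$
|u(p_{S'})-u(p_{\pi_{\mathcal{S}}S'})|\cdot|S'|\le 2\inf_{S'}Nu\cdot|S'|\le 2\int_{S'}Nu\,dx,
$$
and $|u(p_{Q_0})|\cdot|Q_0|\le\int_{Q_0}Nu\,dx$. The boundary correction $|Q_0|\sum_{Q'}|(\varphi_1)_{Q'}|$ is bounded the same way, using that for $S\supseteq Q_0$ the cone condition (with a slightly enlarged aperture) still yields $|u(p_S)|\le\inf_{Q_0}Nu$, and that there are only $O(1)$ neighbours $Q'$.

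The principal obstacle is to sum $\sum_{S'\in\mathcal{S},\,S'\subsetneq Q_0}\int_{S'}Nu$ and obtain $\lesssim\int_{Q_0}Nu$, since without further structure the sets $S'$ may be heavily nested. My plan is to combine the Carleson bound for $\mathcal{S}$ (Lemma~\ref{lem:CarlesonS}) with the sparse-type structure implicit in the $\operatorname{ch}^*$-decomposition of Lemma~\ref{lem:sortOfVitali}: the latter supplies, for each $S'\in\mathcal{S}$, a subset $E_{S'}\subset S'$ with $|E_{S'}|\gtrsim|S'|$, such that the $E_{S'}$ are pairwise disjoint. Then $\inf_{S'}Nu\cdot|S'|\lesssim\int_{E_{S'}}Nu$, and summing with disjointness yields
$$
\sum_{S'\in\mathcal{S},\,S'\subsetneq Q_0}\int_{S'}Nu\,dx\lesssim\sum_{S'}\int_{E_{S'}}Nu\,dx\le\int_{Q_0}Nu\,dx,
$$
closing the estimate.
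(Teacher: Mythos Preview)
Your reduction to the sum $\sum_{S'\in\mathcal{S},\,S'\subset Q_0}\inf_{S'}Nu\cdot|S'|$ is fine, but the last step has a gap. Lemma~\ref{lem:sortOfVitali} does \emph{not} supply pairwise disjoint sets $E_{S'}\subset S'$ with $|E_{S'}|\gtrsim|S'|$; it is a one--generation statement saying that the $\mathcal{S}$-children of a fixed $S$ are controlled, up to a fraction $\tau|S|$, by the centred uncovered subcollection $\operatorname{ch}^*_{\mathcal{S}}(S)$. Across generations the cubes in $\mathcal{S}$ can still be fully nested, and nothing in the paper establishes that the criterion \eqref{eq:Sstop} is sparse in the sense $\sum_{S'\in\operatorname{ch}_{\mathcal S}(S)}|S'|\le\tau|S|$; the proof of Lemma~\ref{lem:CarlesonS} is global and does not go through such a bound. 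So the disjoint $E_{S'}$ you invoke are not available from the cited lemmas.

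The fix is simpler than sparseness: do not pass to $\int_{S'}Nu$ at all, but stay with $\inf_{S'}Nu\cdot|S'|$ and use the Carleson embedding that follows directly from Lemma~\ref{lem:CarlesonS}. By the layer--cake formula and the packing condition,
\[
  \sum_{\substack{S'\in\mathcal{S}\\ S'\subset Q_0}}|S'|\inf_{S'}Nu
  =\int_0^\infty\sum_{\substack{S'\in\mathcal{S},\,S'\subset Q_0\\ S'\subset\{Nu>\lambda\}}}|S'|\,d\lambda
  \lesssim\int_0^\infty\big|\{Nu>\lambda\}\cap Q_0\big|\,d\lambda
  =\int_{Q_0}Nu\,dx,
\]
where for each $\lambda$ one covers $\{Nu>\lambda\}\cap Q_0$ by maximal dyadic subcubes and applies Lemma~\ref{lem:CarlesonS} on each. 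This is exactly what the paper calls ``the Carleson inequality''.

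As a side remark, the paper reaches the same sum more directly, without Lemma~\ref{lem:tanggradest}: it writes $1_{\widehat Q_0}\varphi_1=\sum_{S}u(p_S)1_{\Omega_{\mathcal S}(S)}$ plus a boundary piece, bounds $|\nabla(1_{\widehat Q_0}\varphi_1)|\le\sum_S|u(p_S)|\cdot H^n\lfloor\partial\Omega_{\mathcal S}(S)$, and uses the geometric fact $H^n(\partial\Omega_{\mathcal S}(S))\lesssim|S|$ to arrive immediately at $\sum_S\inf_S Nu\cdot|S|$. Your route via the $\partial_t/\nabla_x$ split works too, but is a detour.
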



\begin{proof}
Let us abbreviate $S_0:=\pi_{\mathcal S}Q_0$. Then we have
\begin{equation}\label{eq:piecewiseJumpEst}
\begin{split}
  1_{\widehat Q_0}\varphi_1
  &=\sum_{S\in \mathcal{S}, S\subset Q_0}u(p_S)1_{\Omega_{\mathcal S}(S)}+u(p_{S_0}) 1_{\Omega_{\mathcal S}(S_0)\cap \widehat Q_0}, \\
  \abs{\nabla(1_{\widehat Q_0}\varphi_1)}
  &\leq \sum_{S\in \mathcal{S}, S\subset Q_0}\abs{u(p_S)}\cdot\abs{\nabla 1_{\Omega_{\mathcal S}(S)}}+\abs{u(p_{S_0})}\cdot\abs{\nabla 1_{\Omega_{\mathcal S}(S_0)\cap \widehat Q_0}} \\
  &= \sum_{S\in \mathcal{S}, S\subset Q_0}\abs{u(p_S)}\cdot H^n\lfloor \partial\Omega_{\mathcal S}(S) +\abs{u(p_{S_0})}\cdot H^n\lfloor \Omega_{\mathcal S}(S_0)\cap \widehat Q_0,
\end{split}
\end{equation}
where $H^n$ is the $n$-dimensional Hausdorff measure, or more simply, the $n$-dimensional Lebesgue measure on the hyperplanes to which it is restricted.

We can then compute
\begin{equation*}
\begin{split}
  \iint\abs{\nabla(1_{\widehat Q_0}\varphi_1)}dtdx
  &\lesssim \sum_{S\in \mathcal{S}, S\subset Q_0}\abs{u(p_S)}\cdot \abs{S} +\abs{u(p_{S_0})}\cdot \abs{Q_0} \\
  &\leq\sum_{S\in \mathcal{S}, S\subset Q_0}\inf_S Nu \cdot \abs{S} +\inf_{Q_0}Nu\cdot \abs{Q_0} \\
  &\lesssim\int_{Q_0} Nu\, dx,
\end{split}
\end{equation*}
where the first estimate is based on simple geometric observations concerning the shape of the sets $\Omega_{\mathcal S}(S)$ and $\Omega_{\mathcal S}(S_0)\cap \widehat Q_0$, and the last one on the Carleson inequality and Carleson property of the collection $\mathcal S$ for the first term from Lemma~\ref{lem:CarlesonS}, and a trivial estimate for the second.
\end{proof}

\begin{rem}
It is perhaps interesting to remark that, in bounding the gradient as in \eqref{eq:piecewiseJumpEst}, we make an apparently crude estimate of the jumps of $\varphi_1$ in the interior of $\widehat Q_0$, in that we dominate a jump $\abs{u(p_{S'})-u(p_S)}$ simply by $\abs{u(p_{S'})}+\abs{u(p_S)}$. However, a comparison with the stopping criterion \eqref{eq:Sstop} shows that this is not so crude after all: it is easy to check that $\abs{u(p_{S'})}+\abs{u(p_S)}\lesssim M_{\mathcal{D}^\delta}(Nu)(S')$ for all $S'\subset S$, and the very stopping criterion \eqref{eq:Sstop} says that, for consecutive stopping cubes $S'\subsetneq S$, the difference is already essentially as big as this maximal quantity. This means that, if we ignore the dependence on $\epsilon$ as we do, there is no essential loss in making this apparently crude estimate. Note, however, that we argued somewhat differently in the context of dyadic martingales, where we did trace a good dependence on $\epsilon$.
\end{rem}

The function $\varphi_1$ provides a good $\epsilon$-approximation of $u$ in all those $W_Q$ where
\begin{equation*}
  \osc_{W_Q}u<\epsilon M_{\mD^\delta}(Nu)(Q),
\end{equation*}
that is, whenever $Q\notin\mathcal R$. Likewise, it is clear that $\varphi_1$ fails to be a good approximation in any $W_R$ with $R\in\mathcal R$. Our final $\epsilon$-approximation will be $\varphi_1+\varphi_2$, where
\begin{equation*}
  \varphi_2|_{W_Q}:=\begin{cases} (u-\varphi_1)|_{W_Q}=u|_{W_Q}-u(p_{\pi_{\mathcal S}Q}), &\text{if } Q\in\mathcal R \\
    0, & \text{else}.\end{cases}
\end{equation*}
It remains to show that $\varphi_2$ satisfies the needed Carleson measure estimate.

\begin{lemma}  \label{lem:phi2}
We have the estimate
\begin{equation*}
   \iint\abs{\nabla(1_{\widehat Q_0}\varphi_2)} dtdx
  \lesssim\int_{Q_0}Nu\, dx
\end{equation*}
for all dyadic cubes $Q_0$.
\end{lemma}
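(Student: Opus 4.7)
My plan is to decompose the total-variation measure $|\nabla(1_{\widehat{Q_0}}\varphi_2)|$ into (i) the absolutely continuous part $|\nabla u|\,dtdx$ on the support $E\cap\widehat{Q_0}$, where $E:=\bigcup_{R\in\mathcal{R}}W_R$; (ii) the jumps on interior Whitney faces $\partial W_R\cap\partial W_{R'}$ inside $\widehat{Q_0}$ whenever $R$ or $R'$ lies in $\mathcal{R}$; and (iii) the jumps across $\partial\widehat{Q_0}$ from inside. Each piece will be bounded by a multiple of $\sum_{R\in\mathcal{R},R\subset Q_0}|R|\inf_R Nu$, after which the final step is exactly the Carleson embedding already used at the end of the proof of Lemma~\ref{lem:phi1}, but now with the Carleson property of $\mathcal{R}$ from Lemma~\ref{lem:CarlesonR} in place of that of $\mathcal{S}$.

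For (i), on each $W_R$ I would combine Cauchy--Schwarz with Caccioppoli's inequality for the weak solution $u$:
\begin{equation*}
\iint_{W_R}|\nabla u|\,dtdx\lesssim |W_R|^{1/2}\Big(\iint_{W_R}|\nabla u|^2\,dtdx\Big)^{1/2}\lesssim \frac{|W_R|}{\ell(R)}\,\|u\|_{L^\infty(\widetilde W_R)}\lesssim |R|\inf_R Nu,
\end{equation*}
using that the slightly enlarged cell $\widetilde W_R$ is contained in the non-tangential cone over every $x\in R$ when the aperture defining $N$ is fixed large enough. For (ii), the jump of $\varphi_2$ across a face $F=\partial W_R\cap\partial W_{R'}$ is bounded in $L^\infty(F)$ by $\|u\|_{L^\infty(\widetilde W_R)}+|u(p_{\pi_{\mathcal{S}}R})|$ when only $R\in\mathcal{R}$, and by $|u(p_{\pi_{\mathcal{S}}R})|+|u(p_{\pi_{\mathcal{S}}R'})|$ when both $R,R'\in\mathcal{R}$. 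Since the corkscrew point $p_{\pi_{\mathcal{S}}R}$ lies in the non-tangential cone over every $x\in R$ (exactly as in the proof of Lemma~\ref{lem:phi1}), we have $|u(p_{\pi_{\mathcal{S}}R})|\leq\inf_R Nu$, and likewise for $R'$. Integrating across $F$ and summing over faces, using that $|\partial W_R|\approx|R|$, each $W_R$ contributes at most $|R|\inf_R Nu$ from its own $u(p_{\pi_{\mathcal S}R})$-type bound, plus cross terms with neighbours $R'\in\mathcal{R}$. After swapping indices $R\leftrightarrow R'$ in the cross sum, the latter also collapses to an expression of the form $\sum_{R'}|R'|\inf_{R'}Nu$, since each $W_{R'}$ also has total face area $\approx|R'|$. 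The boundary contribution (iii) is handled similarly: on the top and sides of $\widehat{Q_0}$ it produces at most $|Q_0|\inf_{Q_0}Nu\leq\int_{Q_0}Nu\,dx$ plus additional $|R|\inf_R Nu$ terms from cells touching $\partial Q_0$.

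To conclude, I would apply the layer-cake identity $\inf_R Nu=\int_0^\infty\mathbf{1}_{R\subset\{Nu>\lambda\}}\,d\lambda$: for each $\lambda$, every cube $R\in\mathcal R$ contributing to the sum is contained in some maximal dyadic subcube of $Q_0\cap\{Nu>\lambda\}$; Lemma~\ref{lem:CarlesonR} applied to each such maximal cube, followed by integration in $\lambda$, yields
\begin{equation*}
\sum_{R\in\mathcal{R},\,R\subset Q_0}|R|\inf_R Nu\lesssim\int_{Q_0}Nu\,dx,
\end{equation*}
completing the proof.

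The main technical obstacle is the bookkeeping in (ii): $W_R$ borders $O(1)$ same-scale or larger Whitney cells from the sides and above, but $\delta^{-n}$ smaller $\mathcal{D}^\delta$-children of $R$ from below (each of face area $\delta^n|R|$, summing to $|R|$), some of which may themselves belong to $\mathcal{R}$. One must verify that the resulting cross-term double sum, after the index-swap $R\leftrightarrow R'$, reduces to a constant multiple of the clean diagonal sum $\sum_{R\in\mathcal{R},R\subset Q_0}|R|\inf_R Nu$, and that shared faces are not overcounted when both adjacent cubes lie in $\mathcal{R}$.
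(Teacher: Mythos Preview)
Your proposal is correct and follows the same overall strategy as the paper: bound the interior gradient of $u$ on each $W_R$ via Cauchy--Schwarz and Caccioppoli, bound the jump contributions by $\inf_R Nu$, arrive at $\sum_{R\in\mathcal R,\,R\subset Q_0}|R|\inf_R Nu$, and finish with the Carleson embedding using Lemma~\ref{lem:CarlesonR}. Your layer-cake argument is just the standard proof of the Carleson inequality that the paper invokes by name.

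The one place where the paper is noticeably slicker is exactly your ``main technical obstacle''. Rather than decomposing the singular part of $\nabla(1_{\widehat{Q_0}}\varphi_2)$ face by face and tracking cross terms between neighbouring $R,R'\in\mathcal R$, the paper writes
\[
1_{\widehat{Q_0}}\varphi_2=\sum_{R\in\mathcal R,\,R\subset Q_0}(u-u(p_{\pi_{\mathcal S}R}))\,1_{W_R}
\]
and applies the product rule and subadditivity of total variation \emph{termwise}:
\[
|\nabla(1_{\widehat{Q_0}}\varphi_2)|\le\sum_{R}\big(|\nabla u|\,1_{W_R}+|u-u(p_{\pi_{\mathcal S}R})|\cdot H^n\lfloor\partial W_R\big).
\]
Since $|u-u(p_{\pi_{\mathcal S}R})|\lesssim\inf_R Nu$ on $\partial W_R$ and $H^n(\partial W_R)\approx|R|$, the jump part is immediately $\lesssim\sum_R|R|\inf_R Nu$, with no need to distinguish whether the neighbouring cell lies in $\mathcal R$ or not, and no index-swap. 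Each interior face is counted twice (once from each side), which is harmless. Your face-by-face decomposition reaches the same bound, but the paper's one-line product-rule argument eliminates the bookkeeping you flagged.
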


\begin{proof}
We have
\begin{equation*}
\begin{split}
  1_{\widehat Q_0}\varphi_2 &=\sum_{R\in\mathcal R, R\subset Q_0}(u-u(p_{\pi_{\mathcal S}R}))\cdot 1_{W_R}, \\
  \nabla(1_{Q_0}\varphi_2) &=\sum_{R\in\mathcal R, R\subset Q_0}[\nabla u \cdot 1_{W_R}+(u-u(p_{\pi_{\mathcal S}R}))\cdot \nabla 1_{W_R}] ,
\end{split}
\end{equation*}
and hence
\begin{equation}\label{eq:phi2MainStep}
  \iint\abs{\nabla(1_{Q_0}\varphi_2)}dtdx \lesssim
  \sum_{R\in\mathcal R, R\subset Q_0}\iint_{W_R}\abs{\nabla u}dtdx
  +\sum_{R\in\mathcal R, R\subset Q_0}\inf_R Nu\cdot \abs{R},
\end{equation}
using again that
\begin{equation*}
  \abs{\nabla 1_{W_R}}=H^n\lfloor\partial W_R,\qquad H^n(\partial W_R)\lesssim\abs{R}.
\end{equation*}
By Caccioppoli's inequality, we can estimate the first term in \eqref{eq:phi2MainStep} by
\begin{equation*}
\begin{split}
  \iint_{W_R}\abs{\nabla u}dtdx
  &\leq\Big(\iint_{W_R}\abs{\nabla u}^2dtdx\Big)^{1/2}\abs{W_R}^{1/2}\\
  &\lesssim\frac{1}{\ell(R)}\Big(\iint_{\widetilde W_R}\abs{u}^2dtdx\Big)^{1/2}\abs{W_R}^{1/2} \\
  &\lesssim \frac{1}{\ell(R)}\Big(\iint_{\widetilde W_R}\inf_R (Nu)^2dtdx\Big)^{1/2}\abs{W_R}^{1/2} \\
   &\lesssim \frac{1}{\ell(R)}\inf_R (Nu)\abs{W_R}=\inf_R(Nu)\abs{R},
\end{split}
\end{equation*}
which coincides with the second term in \eqref{eq:phi2MainStep}. So altogether
\begin{equation*}
  \iint\abs{\nabla(1_{Q_0}\varphi_2)}dtdx
  \lesssim\sum_{R\in\mathcal{R}, R\subset Q_0}\inf_R Nu\cdot\abs{R}
  \lesssim\int_{Q_0}Nu\,dx,
\end{equation*}
by Carleson's inequality and the Carleson property of $\mathcal R$ from Lemma~\ref{lem:CarlesonR}
in the last step.
\end{proof}

\bibliographystyle{acm}

\end{document}